\gdef\section{\@startsection{section}{1}%
  \z@{.7\linespacing\@plus\linespacing}{.5\linespacing}%
  {\normalfont\bfseries\centering}}
\title
{Renewal theory for asymmetric $U$-statistics}
\date{14 April, 2018}
\author{Svante Janson}
\thanks{Partly supported by the Knut and Alice Wallenberg Foundation}
\address{Department of Mathematics, Uppsala University, PO Box 480,
SE-751~06 Uppsala, Sweden}
\email{svante.janson@math.uu.se}
\urladdr{http://www.math.uu.se/svante-janson}
\subjclass[2010]{60F05; 60F17, 60K05} 
\numberwithin{equation}{section}
\renewcommand\le{\leqslant}
\renewcommand\ge{\geqslant}
\theoremstyle{plain}
\newtheorem{theorem}{Theorem}[section]
\newtheorem{lemma}[theorem]{Lemma}
\newtheorem{proposition}[theorem]{Proposition}
\newtheorem{corollary}[theorem]{Corollary}
\theoremstyle{definition}
\newtheorem{example}[theorem]{Example}
\newtheorem{remark}[theorem]{Remark}
\theoremstyle{remark}
\newenvironment{romenumerate}[1][-10pt]{
\addtolength{\leftmargini}{#1}\begin{enumerate}
 }{\end{enumerate}}
\newcounter{oldenumi}
{\setcounter{oldenumi}{\value{enumi}}
\begin{romenumerate} \setcounter{enumi}{\value{oldenumi}}}
{\end{romenumerate}}
\newcounter{thmenumerate}
\newenvironment{thmenumerate}
{\setcounter{thmenumerate}{0}%
 \def\item{\par
 \refstepcounter{thmenumerate}\textup{(\roman{thmenumerate})\enspace}}
}
{}
\newcounter{xenumerate}   
\newcommand\pfitemx[1]{\par#1:}
\newcommand\pfitemref[1]{\pfitemx{\ref{#1}}}
\newcounter{steps}
\newcommand\stepx{\smallskip\noindent\refstepcounter{steps}%
 \emph{Step \arabic{steps}. }}
\newcommand{\refT}[1]{Theorem~\ref{#1}}
\newcommand{\refTs}[1]{Theorems~\ref{#1}}
\newcommand{\refC}[1]{Corollary~\ref{#1}}
\newcommand{\refL}[1]{Lemma~\ref{#1}}
\newcommand{\refR}[1]{Remark~\ref{#1}}
\newcommand{\refS}[1]{Section~\ref{#1}}
\newcommand{\refSS}[1]{Section~\ref{#1}}
\newcommand{\refStep}[1]{Step~\ref{#1}}
\newcommand{\refP}[1]{Proposition~\ref{#1}}
\newcommand{\refE}[1]{Example~\ref{#1}}
\newcommand\XREM[1]{\relax}
\xdef\klockan{\the\count1.0\the\count255}
\xdef\klockan{\the\count1.\the\count255}\fi
\newcommand\nopf{\qed}   
\DeclareMathOperator*{\sumx}{\sum\nolimits^{*}}
\newcommand{\summo}{\sum_{m=0}^\infty}
\newcommand{\sumk}{\sum_{k=1}^\infty}
\newcommand{\sumin}{\sum_{i=1}^n}
\newcommand{\sumjd}{\sum_{j=1}^d}
\newcommand{\sumkd}{\sum_{k=1}^d}
\newcommand\set[1]{\ensuremath{\{#1\}}}
\newcommand\xpar[1]{(#1)}
\newcommand\bigpar[1]{\bigl(#1\bigr)}
\newcommand\Bigpar[1]{\Bigl(#1\Bigr)}
\newcommand\lrpar[1]{\left(#1\right)}
\newcommand\xcpar[1]{\{#1\}}
\newcommand\abs[1]{|#1|}
\newcommand\bigabs[1]{\bigl|#1\bigr|}
\newcommand\Bigabs[1]{\Bigl|#1\Bigr|}
\newcommand\biggabs[1]{\biggl|#1\biggr|}
\def\rompar(#1){\textup(#1\textup)}    
\newcommand\xfrac[2]{#1/#2}
\newcommand\xpfrac[2]{(#1)/#2}
\newcommand\parfrac[2]{\lrpar{\frac{#1}{#2}}}
\newcommand\bigparfrac[2]{\bigpar{\frac{#1}{#2}}}
\newcommand\Bigparfrac[2]{\Bigpar{\frac{#1}{#2}}}
\def\xexp(#1){e^{#1}}
\newcommand\ceil[1]{\lceil#1\rceil}
\newcommand\floor[1]{\lfloor#1\rfloor}
\newcommand\setn{\set{1,\dots,n}}
\newcommand\ntoo{\ensuremath{{n\to\infty}}}
\newcommand\xtoo{\ensuremath{{x\to\infty}}}
\newcommand\normx[2]{\|#2\|_{#1}}
\newcommand\norm[1]{\|#1\|_2}
\newcommand\normp[1]{\normx{p}{#1}}
\newcommand\punkt{.\spacefactor=1000}    
\newcommand\iid{i.i.d\punkt}    
\newcommand\ie{i.e\punkt}
\newcommand\eg{e.g\punkt}
\newcommand{\as}{a.s\punkt}
\newcommand{\aex}{a.e\punkt}
\newcommand{\tend}{\longrightarrow}
\newcommand\dto{\overset{\mathrm{d}}{\tend}}
\newcommand\pto{\overset{\mathrm{p}}{\tend}}
\newcommand\asto{\overset{\mathrm{a.s.}}{\tend}}
\newcommand\eqd{\overset{\mathrm{d}}{=}}
\newcommand\bbR{\mathbb R}
\newcommand\bbZ{\mathbb Z}
\newcounter{CC}
\newcounter{cc}
\newcommand\E{\operatorname{\mathbb E{}}}
\renewcommand\P{\operatorname{\mathbb P{}}}
\newcommand\Var{\operatorname{Var}}
\newcommand\Cov{\operatorname{Cov}}
\newcommand\Be{\operatorname{Be}}
\newcommand\Ge{\operatorname{Ge}}
\newcommand\ga{\alpha}
\newcommand\gd{\delta}
\newcommand\gD{\Delta}
\newcommand\gam{\gamma}
\newcommand\gs{\sigma}
\newcommand\gS{\Sigma}
\newcommand\gss{\sigma^2}
\newcommand\eps{\varepsilon}
\renewcommand\phi{\xxx}  
\newcommand\cA{\mathcal A}
\newcommand\cE{\mathcal E}
\newcommand\cF{\mathcal F}
\newcommand\cS{{\mathcal S}}
\newcommand\cW{\mathcal W}
\newcommand\tF{\tilde F}
\newcommand\tS{{\tilde S}}
\newcommand\tU{{\widetilde U}}
\newcommand\tV{\tilde V}
\newcommand\tW{\widetilde W}
\newcommand\tZ{{\tilde Z}}
\newcommand\tf{\tilde f}
\newcommand\td{\tilde d}
\newcommand\tmu{\tilde \mu}
\newcommand\ett[1]{\boldsymbol1\xcpar{#1}}
\newcommand\etta{\boldsymbol1}
\newcommand\qw{^{-1}}
\newcommand\qww{^{-2}}
\newcommand\qq{^{1/2}}
\newcommand\qqw{^{-1/2}}
\newcommand\intoi{\int_0^1}
\newcommand\oi{\ensuremath{[0,1]}}
\newcommand\ooo{[0,\infty)}
\newcommand\setoi{\set{0,1}}
\newcommand\dd{\,\mathrm{d}}
\newcommand\lhs{left-hand side}
\newcommand\rhs{right-hand side}
\newcommand\nj{_{n,j}}
\newcommand\mj{_{m,j}}
\newcommand\Doo{D\ooo}
\newcommand\gDa{\gD a}
\newcommand\ux{\widehat U}
\newcommand\WW{\mathbf W}
\newcommand\oT{[0,T]}
\newcommand\oas{o_{\mathrm{a.s.}}}
\newcommand\oasx{o}
\newcommand\fXXd{f(X_1,\dots,X_d)}
\newcommand\FF{\widehat F}
\newcommand\xx[1]{^{(#1)}}
\newcommand\fx{f_*}
\newcommand\fS{\mathfrak S}
\newcommand\NN[1]{N_{#1}}
\newcommand\flnx{\floor{n(x)}}
\newcommand\ZZ{\widehat Z}
\newcommand\BB{\mathbf B}
\newcommand\UU{U^*}
\newcommand\nona{nonarithmetic}
\newcommand\Roo{R_\infty}
\newcommand\xxx{\gD x}
\newcommand\SSS{S^*}
\newcommand\Uoi{U(0,1)}
\newcommand\perm{\relax}
\newcommand\permB{\perm{231}, \perm{312}}
\newcommand\permAAA{\perm{231},\perm{312}, \perm{321}}
\newcommand\xoo{_1^\infty}
\newcommand\ELL{L}
\newcommand\Nxn[1]{N_{#1,n}}
\newcommand\Ngsn{\Nxn\gs}
\newcommand{\Holder}{H\"older}
\newcommand\CS{Cauchy--Schwarz}
\newcommand\CSineq{\CS{} inequality}
\begin{document}

\begin{abstract} 
We extend a functional limit theorem for symmetric $U$-statistics
[Miller and Sen, 1972] 
to asymmetric $U$-statistics, and use this to show some 
renewal theory results for asymmetric $U$-statistics.

 Some applications are given.
\end{abstract}

\maketitle

\section{Introduction}\label{S:intro}

Let $X,X_1,X_2,\dots,$ be an \iid{} sequence of random variables taking
values in an arbitrary measurable space $S=(S,\cS)$. (In most cases,
$S=\bbR$
or perhaps $\bbR^k$, or a Borel subset of one of these, but 
we can just as well consider the general case.)
Furthermore, let $d\ge1$ and let $f:S^d\to \bbR$ be a given measurable function.
We then define the (real-valued) random variables
\begin{equation}
  \label{U}
U_n=U_n(f):=\sum_{1\le i_1<\dots<i_d\le n} f\bigpar{X_{i_1},\dots,X_{i_d}},
\qquad n\ge0.
\end{equation}
We call $U_n$ a \emph{$U$-statistic}, 
following \citet{Hoeffding}.

\begin{remark}
Many authors, including
\citet{Hoeffding},
normalize $U_n$ by dividing
the sum in \eqref{U}
  by $\binom nd$,
the number of terms in it;
the traditional  definition 
(which assumes $n\ge d$)
is thus in our notation $U_n/\binom nd$.
We find it more convenient for our purposes
to use the unnormalized version above.
\end{remark}

It is common,  following \citet{Hoeffding}, to
assume that $f$ is a symmetric function of its $d$ variables.
In this case, the order of the variables does not matter, and we can in
\eqref{U} sum
over all sequences $i_1,\dots,i_d$ of $d$ distinct elements of $\setn$,
up to an obvious factor of $d!$. 
(\cite{Hoeffding} gives both versions.)
Conversely, if we sum over all such sequences, we may without loss of
generality
assume that $f$ is symmetric.
However, in the present paper we consider the general case of \eqref{U}
without assuming symmetry, which we for
emphasis may call \emph{asymmetric $U$-statistics}. 
One of the
purposes of this paper is to generalize a result
by \cite{MillerSen} on functional convergence
from the symmetric case to the general, asymmetric case.
We then use this result to derive some renewal theory results for the
sequence $U_n$.
One motivation for this is some applications to random restricted
permutations, see \refS{Sex}.

Univariate limit results, \ie, limits in distribution of $U_n$ after
suitable normalization, are well-known also in the asymmetric case, see 
\eg{}  \cite[Chapter 11.2]{SJIII}. 
The possibility of functional limits is
briefly mentioned in \cite[Remark 11.25]{SJIII},  
and a special case ($d=2$ and $f$ antisymmetric) was studied in \cite{SJ22},
see \refE{E22}; However, we are not aware of functional limit theorems in
the generality of the present paper.

The main results are stated in \refS{Smain}.
The proofs are given in \refS{Spf}; they use standard methods, in particular
the decomposition and projection method of \citet{Hoeffding},
but some complications arise in the asymmetric case; this includes
applications to random restricted permutations that gave the initial
motivation to write the present paper.
Some examples and applications are discussed in \refS{Sex}.
We end with some further comments and open problems
in \refS{Sadd}; this includes  
more comments on the relation between the symmetric and asymmetric cases.

The results in the present paper focus on the non-degenerate case, where the
covariance matrix   $\gS=(\gs_{ij})$ defined by \eqref{gsij} below is
non-zero. In the degenerate case when $\gS=0$, the result still holds but
are less interesting, since the obtained 
  limits in \eg{} \refT{T1} are degenerate. 
See \refR{Rdeg} for  further comments on the degenerate case.

\section{Some notation}\label{Snot}

We consider as in the introduction, unless otherwise said, some given \iid{}
random variables $X_i\in S$ and
a given function $f:S^d\to\bbR$. 
In particular,
$d\ge1$ is fixed,
and we therefore often  omit it from
the notation.

We assume throughout $f(X_1,\dots,X_d)\in L^1$ (and usually $L^2$), and define
\begin{equation}\label{mu}
  \mu:=\E \fXXd.
\end{equation}
We study $U_n=U_n(f)$ defined by \eqref{U}.
Let
\begin{equation}
  \label{U*}
U^*_n=U^*_n(f):=\max_{1\le m\le n}|U_m(f)|.
\end{equation}

We use $\normp\,$ for the $L^p$-norm: $\normp{Y}:=\xpar{\E Y^p}^{1/p}$ for any
random variable $Y$ and $p>0$, 
and $\normp{f}:=\normp{ f(X_1,\dots,X_d)}$ (and similarly
for other functions).

$\cF_n$ is the $\gs$-field generated by $X_1,\dots,X_n$.

If we consider a limit as \ntoo{}, and $a_n$ is a given sequence, 
then $\oas(a_n)$ denotes a sequence of random variables $R_n$ such that
$R_n/a_n\asto0$. This extends to other limits such as \xtoo, \emph{mutatis
  mutandis}. 

$C$ denotes positive constants that may change from one occurence to the
next; they may depend on $d$ (or $\td$)
but not on $f$ or $n$ or other variables. 
Similarly, $C_f$ denote constants that may depend on $f$,
$C_p$ denotes constants that may depend on the parameter $p$ (and
$d$), and so on.

\section{Main results} \label{Smain}

\subsection{Limit theorems}

For completeness, we begin with the law of large numbers, extending the
result by \citet{HoeffdingLLN} to the asymmetric case.

\begin{theorem}\label{TLLN}
  Suppose that $\fXXd\in L^1$. 
Then, as \ntoo,
  \begin{equation}\label{tlln}
    U_n/\binom nd \asto \mu.
  \end{equation}
\end{theorem}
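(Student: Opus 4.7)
My plan is to reduce the asymmetric LLN to the classical \emph{symmetric} Hoeffding LLN of \cite{HoeffdingLLN} by kernel symmetrization. Define
\begin{equation*}
\tf(x_1,\dots,x_d):=\frac{1}{d!}\sum_{\sigma\in\fS_d} f\bigpar{x_{\sigma(1)},\dots,x_{\sigma(d)}},
\end{equation*}
so that $\E\tf(X_1,\dots,X_d)=\mu$ by exchangeability, and set $\tU_n:=\sum_{i_1<\cdots<i_d}\tf(X_{i_1},\dots,X_{i_d})$. Let $\cG_n$ be the $\gs$-field generated by the unordered multiset $\{X_1,\dots,X_n\}$ together with the individual variables $X_{n+1},X_{n+2},\dots$; then $(\cG_n)_{n\ge 1}$ is a decreasing filtration, and $\bigcap_n\cG_n$ is the exchangeable $\gs$-field, trivial by Hewitt--Savage. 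Given $\cG_n$, the ordering of $X_1,\dots,X_n$ is uniform over the $n!$ possibilities, so for $n\ge d$ a short calculation yields $\E\bigsqpar{f(X_1,\dots,X_d)\mid\cG_n} = \tU_n/\binom{n}{d}$; reverse martingale convergence then gives the classical symmetric LLN $\tU_n/\binom{n}{d}\asto\mu$.

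It remains to show that the residual $R_n:=U_n-\tU_n=\sum_{i_1<\cdots<i_d}h(X_{i_1},\dots,X_{i_d})$ with $h:=f-\tf$ (whose symmetrization vanishes, so $\E h(X_I)=0$ for every $d$-subset $I$) satisfies $R_n/\binom{n}{d}\asto 0$. Assume first $f\in L^2$. A $U$-statistic variance calculation --- summing $\Cov(h(X_I),h(X_J))$ over pairs grouped by $|I\cap J|$, with disjoint pairs contributing zero --- gives $\Var(R_n)=O(n^{2d-1})$, hence $\Var(R_n/\binom{n}{d})=O(n\qw)$. Chebyshev along $n_j:=j^2$ plus Borel--Cantelli produce a.s.\ convergence along the subsequence, and for bounded $h$ the interpolation
\begin{equation*}
|R_n-R_{n_j}|\le\|h\|_\infty\bigpar{\tbinom{n_{j+1}}{d}-\tbinom{n_j}{d}}=O(j^{2d-1})=o\bigpar{\tbinom{n_j}{d}}
\end{equation*}
for $n_j\le n\le n_{j+1}$ fills the gaps. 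To extend to general $f\in L^1$, truncate $f=f\mathbf{1}_{|f|\le M}+f\mathbf{1}_{|f|>M}$: apply the bounded case to the first summand, and dominate the second through $|U_n(f\mathbf{1}_{|f|>M})|\le U_n(|f|\mathbf{1}_{|f|>M})\le d!\,\tU_n(|f|\mathbf{1}_{|f|>M})$, using the pointwise bound $g\le d!\tilde g$ for $g\ge 0$. The already-established symmetric LLN applied to $|f|\mathbf{1}_{|f|>M}$ then yields $\limsup_n|U_n(f\mathbf{1}_{|f|>M})|/\binom{n}{d}\le d!\E|f|\mathbf{1}_{|f|>M}$, which vanishes as $M\to\infty$.

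The main obstacle is the $L^2\to L^1$ reduction. The reverse martingale structure that makes the symmetric case clean at the $L^1$ level does not persist for the antisymmetric residual, so the variance bound has to be combined with the truncation argument above, leveraging the symmetric LLN (applied to $|f|\mathbf{1}_{|f|>M}$) to control the tail contribution uniformly in $n$.
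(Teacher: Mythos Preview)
Your proof is correct. Both your argument and the paper's ultimately hinge on the symmetric Hoeffding LLN together with an $L^2$-level estimate plus truncation, but the organization and tools differ. The paper first proves the full $L^2$ case via its \refL{LU*} (a forward-martingale decomposition indexed by the last variable, yielding a maximal inequality), and then passes to $L^1$ by a $\liminf$/$\limsup$ argument: truncating $f\ge0$ from above to get the $\liminf$, and writing the full symmetrization $F=\sum_\pi f_\pi$ as $f+g$ with $g\ge0$ to get the $\limsup$ from the symmetric LLN applied to $F$. You instead split $U_n=\tU_n+R_n$ at the outset, dispatch $\tU_n$ by the reverse-martingale proof of the symmetric LLN, and handle $R_n$ for bounded $f$ by an elementary variance bound along $n_j=j^2$ with a crude $\|h\|_\infty$ interpolation between subsequence points; your $L^1$ extension then uses the pointwise domination $g\le d!\,\tilde g$ for $g\ge0$ to push the tail piece back into the symmetric LLN. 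Your route is more self-contained and avoids the \refL{LU*} machinery (which the paper develops anyway for \refT{T1}); the paper's route gives a slightly stronger intermediate statement (the full $L^2$ case, not just bounded $f$) but both reach the $L^1$ conclusion equally well.
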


Next we state a functional limit theorem,
extending the theorem by  \citet{MillerSen} for
the symmetric case.
 We use the space $\Doo$ with the
usual Skorohod topology, see \eg{} \cite[Appendix A2]{Kallenberg};
recall that convergence in $\Doo$ to a continuous limit is equivalent to
uniform convergence on any compact interval $\oT$.
We define the $d\times d$
matrix $\gS=(\gs_{ij})$ by
\begin{equation}\label{gsij}
  \gs_{ij}:=\Cov\bigpar{f_i(X),f_j(X)}
=\E\bigpar{f_i(X)f_j(X)},
\qquad i,j=1,\dots,d,
\end{equation}
with $f_i,f_j$ defined by \eqref{fi} below.
Let  $\WW(t):=\bigpar{W_1(t),\dots,W_d(t)}$, $t\ge0$, be a continuous
$d$-dimensional Gaussian process with $\WW(0)=0$ and 
stationary independent increments
\begin{equation}\label{WW}
\WW(s+t)-\WW(s)\sim N\bigpar{0,t\Sigma}.  
\end{equation}
Note that each component $W_j$ is a standard Brownian motion up to a factor
$\gs_{jj}\qq$, and that we can represent $\WW$ as
$\WW(t)=\Sigma\qq\BB(t)$, where $\BB(t)$ is a $d$-dimensional standard
Brownian motion.
Define also the functions
\begin{equation}\label{psi}
\psi_j(s,t)=\psi_{j;d}(s,t):=\frac{1}{(j-1)!\,(d-j)!}s^{j-1}(t-s)^{d-j}.
\end{equation}

We extend $U_n$ defined by \eqref{U}
to a function of a real variable by $U_x:=U_{\floor x}$,
$x>0$. 
(We tacitly do the same for other sequences later.)

\begin{theorem}
  \label{T1}
Suppose that $f(X_1,\dots,X_d)\in L^2$.
Then, as \ntoo,
  \begin{equation}\label{t1}
\frac{ U_{nt}-n^dt^d\mu/d!}{n^{d-1/2}} \dto Z_t, 
\qquad t\ge0,
  \end{equation}
in $\Doo$, where $Z_t$ is a continuous centered Gaussian process
that can be defined as
\begin{equation}\label{Z}
Z_t:=\sumjd  \int_0^t\psi_j(s,t)\dd W_j(s).
\end{equation}
Equivalently, $Z_t$ has the covariance function, for $0\le s\le t$,
{\multlinegap=0pt\begin{multline}
\label{t1cov}
      \Cov(Z_s,Z_t)
=\sum_{i,j=1}^d \gs_{ij} 
  \int_0^s  \psi_i(u,s)\psi_j(u,t)\dd u
\\
=
\sum_{i,j=1}^d \frac{\gs_{ij} }{(i-1)!\,(j-1)!\,(d-i)!\,(d-j)!}
  \int_0^s  u^{i+j-2}(s-u)^{d-i}(t-u)^{d-j}\dd u.
\end{multline}}

Moreover, \eqref{t1} holds jointly for several functions $f\xx{k}$, possibly
with different $d\xx k$, with limits given by \eqref{Z}, where the
corresponding $W_j\xx k$ together form a Gaussian process with stationary
independent increments given by the covariances
\begin{equation}\label{xul}
  \Cov\bigpar{W\xx k_i(s),W_j\xx \ell(t)}
= \Cov\bigpar{f_i\xx k(X),f_j\xx\ell(X)}\cdot(s\land t).
\end{equation}
\end{theorem}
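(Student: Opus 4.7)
The plan is to extend Hoeffding's projection method from the symmetric to the present position-aware setting. For each $j=1,\dots,d$, define the $j$-th projection
\begin{equation*}
f_j(x):=\E\bigsqpar{f(X_1,\dots,X_{j-1},x,X_{j+1},\dots,X_d)}-\mu,
\end{equation*}
and decompose $f(x_1,\dots,x_d)=\mu+\sumjd f_j(x_j)+r(x_1,\dots,x_d)$, where by construction $\E f_j(X)=0$ and $\E\bigsqpar{r(X_1,\dots,X_d)\mid X_k}=\E\bigsqpar{r(X_1,\dots,X_d)}=0$ for every $k$. The constant term of $U_n$ contributes $\binom{\floor{nt}}{d}\mu=n^dt^d\mu/d!+O(n^{d-1})$, accounting for the centering in \eqref{t1}. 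The linear part yields
\begin{equation*}
H_n(t):=\sumjd\sum_{i=1}^{\floor{nt}}\binom{i-1}{j-1}\binom{\floor{nt}-i}{d-j}f_j(X_i),
\end{equation*}
since for $i_1<\dots<i_d\le\floor{nt}$ with $i_j=i$ fixed there are $\binom{i-1}{j-1}$ choices of smaller indices and $\binom{\floor{nt}-i}{d-j}$ choices of larger ones. This is where the asymmetry enters visibly: unlike the symmetric case, each position $j$ contributes its own projection with its own combinatorial weight.

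For the main term, set $s=i/n$, so that $\binom{i-1}{j-1}\sim(ns)^{j-1}/(j-1)!$ and $\binom{\floor{nt}-i}{d-j}\sim(n(t-s))^{d-j}/(d-j)!$ uniformly in the relevant range. This gives
\begin{equation*}
\frac{H_n(t)}{n^{d-1/2}}=\sumjd\int_0^t \psi_j(s,t)\dd W_n\xx j(s)+\op(1),
\end{equation*}
where $W_n\xx j(s):=n\qqw\sum_{i\le ns}f_j(X_i)$. The multivariate functional CLT (Donsker) applied to the centered \iid{} $d$-vectors $\bigpar{f_1(X_i),\dots,f_d(X_i)}$ yields $(W_n\xx1,\dots,W_n\xx d)\dto\WW$ in $\Doo^d$ with the covariance structure \eqref{WW}. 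Since each $\psi_j(\cdot,t)$ is smooth and uniformly bounded for $t\in\oT$, a standard integration-by-parts / continuous-mapping argument transfers the convergence to the stochastic integrals, jointly in $t\in\Doo$, yielding \eqref{Z}. The covariance formula \eqref{t1cov} then follows directly from the It\^o isometry together with \eqref{WW}.

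The main obstacle is showing that the remainder $U_n(r)$ is negligible, \ie, $\op(n^{d-1/2})$ uniformly in $t$ on compact intervals. Iterating the projection construction on $r$ produces a full (asymmetric) Hoeffding decomposition of $f$ into pieces indexed by nonempty subsets $A\subseteq\set{1,\dots,d}$, with rank-$1$ pieces giving $H_n$ and rank-$k$ pieces for $k\ge2$ being orthogonal across distinct index subsets. A second-moment computation using this orthogonality yields $\E U_n(g)^2=O(n^{2d-k})$ for a rank-$k$ kernel $g$, so $U_n(g)=\Op(n^{d-k/2})=\op(n^{d-1/2})$; the lack of symmetry only changes the combinatorial weights attached to each $g$, not their order of magnitude. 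Uniformity in $t\in\oT$ is obtained by a maximal-inequality argument of Doob type (exploiting the natural martingale-in-$n$ structure), or alternatively by a fourth-moment chaining bound, analogous to the symmetric treatment of \citet{MillerSen}. Finally, the joint statement for several $f\xx k$ follows by applying the same decomposition simultaneously to each kernel and invoking multivariate Donsker for the stacked \iid{} vectors $\bigpar{f_j\xx k(X_i)}_{j,k}$, whose asymptotic covariance structure is exactly \eqref{xul}.
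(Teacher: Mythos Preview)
Your proposal is correct and follows the same overall architecture as the paper: Hoeffding projection to isolate the linear part, Donsker plus a continuous-mapping/integration-by-parts passage to the stochastic integrals \eqref{Z}, and a second-moment bound on the remainder to show it is $\op(n^{d-1/2})$ uniformly on compacts. The treatment of the main term and the joint statement for several kernels match the paper's \refL{Lux} and the argument around \eqref{donsker}--\eqref{crux} essentially line for line.

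The one genuine difference is how the remainder is controlled. You propose the full (subset-indexed) Hoeffding decomposition of $r$ and use orthogonality of completely degenerate pieces to get $\E U_n(g_A)^2=O(n^{2d-|A|})$, then appeal to a Doob-type or fourth-moment maximal bound. The paper instead introduces an \emph{order-based} decomposition $f-\mu=\sum_{k=1}^d F_k$ with $F_k(x_1,\dots,x_k)=\E[f\mid X_1,\dots,X_k]-\E[f\mid X_1,\dots,X_{k-1}]$ (\refL{LU*}); the point is that $U_n(F_k)$ is then a genuine forward martingale in $n$, so Doob's inequality applies directly, and a summation by parts handles the binomial weights. Your route works too, but be aware that the subset-indexed pieces $U_n(g_A)$ are \emph{not} martingales in $n$ when $d\notin A$ (the combinatorial coefficients depend on $n$), so the ``natural martingale-in-$n$ structure'' you invoke is not immediate; one must first pass, via a summation by parts, to the martingale $\sum_{j_1<\dots<j_k\le m} g_A(X_{j_1},\dots,X_{j_k})$ --- which is effectively what the paper's $F_k$ trick accomplishes in one stroke. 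Your fourth-moment chaining alternative avoids this issue and is a legitimate substitute.
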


The It\^o integrals in \eqref{Z} can by 
\eqref{psi} be written as linear combinations of $t^k\int_0^t s^{d-1-k}\dd
W_j(s)$ with $0\le k\le d-j$; 
thus $Z_t$ is well-defined and continuous for $t\ge0$, with $Z_0=0$. 
These stochastic integrals can also by integration by parts be expressed as
Riemann integrals of continuous stochastic processes, see \eqref{crux}.

Note that the final integral in \eqref{t1cov} is elementary, for any given
$i,j,d$, and that the covariance function in \eqref{t1cov} is a homogeneous
polynomial in $s$ and $t$ of degree $2d-1$.

\begin{example}\label{E2}
  In the case $d=2$, we obtain from \eqref{t1cov}, still for $0\le s\le t$,
  \begin{equation}\label{e2}
    \Cov(Z_s,Z_t)=
\tfrac12 \bigpar{\gs_{11}+\gs_{12}}s^2t
+\tfrac16\bigpar{2\gs_{22}-\gs_{11}-\gs_{12}}s^3.
  \end{equation}
\end{example}

\begin{remark}
By \eqref{psi} and the binomial theorem,
\begin{equation}\label{psisum}
  \sumjd \psi_j(s,t)=\frac{t^{d-1}}{(d-1)!}.
\end{equation}
  In the symmetric case, all $f_i$ are equal and thus all $\gs_{ij}$ are
  equal, see \eqref{gsij}. Hence, \eqref{t1cov} simplifies by \eqref{psisum}
  to 
\begin{equation}\label{t1cov=}
  \Cov(Z_s,Z_t)
=\gs_{11}   \int_0^s  \frac{s^{d-1}}{(d-1)!}\,\frac{t^{d-1}}{(d-1)!}\dd u
=\frac{\gs_{11}}{(d-1)!^2}s^{d}t^{d-1} .
\end{equation}
Equivalently, $t^{-(d-1)}Z_t$ is $\gs_{11}\qq(d-1)!\qw B_t$ for a standard
Brownian motion $B_t$. This recovers the result by \citet{MillerSen} for
the symmetric case.
Note that our general result \refT{T1} is similar to the symmetric case,
with a continuous Gaussian limit process, but that the covariance function
in general is more complicated, as seen for $d=2$ in \eqref{e2}, and that 
the limit thus is not  a Brownian motion.
\end{remark}

By restricting attention to $t=1$, we obtain the following univariate
limit,
shown in  \cite[Corollary 11.20]{SJIII}.
\begin{corollary}\label{C1}
Suppose that $f(X_1,\dots,X_d)\in L^2$.
Then, as \ntoo,
\begin{equation}\label{c1}
  \frac{U_n-\binom nd \mu}{n^{d-1/2}} \dto N\bigpar{0,\gss},
\end{equation}
where
\begin{equation}\label{c1var}
  \begin{split}
      \gss&:=
\lim_\ntoo \frac{\Var(U_n)}{n^{2d-1}}
=
\Var(Z_1)
\\&
\phantom:=
\sum_{i,j=1}^d
\frac{(i+j-2)!\,(2d-i-j)!}{(i-1)!\,(j-1)!\,(d-i)!\,(d-j)!\,(2d-1)!}\gs_{ij}
.
  \end{split}
\end{equation}
Moreover,
\begin{equation}\label{c1=0}
  \gss=0 \iff f_i(X)=0 \text{ \as{} for every $i=1,\dots,d$}.
\end{equation}
\end{corollary}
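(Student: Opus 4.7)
The plan is to read off all three assertions from \refT{T1} at $t=1$, combined with standard $L^2$ bookkeeping on $U_n$.

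The convergence \eqref{c1} itself is immediate. Setting $t=1$ in \eqref{t1} yields $(U_n - n^d\mu/d!)/n^{d-1/2}\dto Z_1$. Since $\binom{n}{d} = n^d/d! + O(n^{d-1})$, the extra shift $\bigpar{\binom{n}{d}-n^d/d!}\mu/n^{d-1/2} = O(n^{-1/2}) \to 0$, so by Slutsky one may replace $n^d\mu/d!$ by $\binom{n}{d}\mu$ and obtain \eqref{c1} with $\gss := \Var(Z_1)$; the limit is Gaussian because $Z_1$ is.

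Next, specialising \eqref{t1cov} to $s=t=1$ reduces each coefficient to a beta integral $\intoi u^{i+j-2}(1-u)^{2d-i-j}\,\dd u = (i+j-2)!(2d-i-j)!/(2d-1)!$, giving the closed form in \eqref{c1var}. To identify $\gss$ also as $\lim_n \Var(U_n)/n^{2d-1}$ one must upgrade the distributional convergence to convergence of second moments; this requires uniform integrability of $\{(U_n - \binom{n}{d}\mu)^2/n^{2d-1}\}_n$, which in turn follows from $\Var(U_n) = O(n^{2d-1})$. The latter bound comes from the Hoeffding-type orthogonal decomposition underlying \refT{T1}: the first-order projection contributes $\Theta(n^{2d-1})$, while higher-order projections contribute $O(n^{2d-2})$. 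I expect this uniform-integrability step to be the main technical point, since it is not a direct consequence of \refT{T1}.

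Finally, for the degeneracy equivalence \eqref{c1=0}, rearrange the first expression in \eqref{t1cov} at $s=t=1$ and use $\gs_{ij}=\E[f_i(X)f_j(X)]$ with centered $f_j(X)$:
\begin{equation*}
  \gss = \intoi \sum_{i,j=1}^d \psi_i(u,1)\psi_j(u,1)\gs_{ij}\,\dd u
       = \intoi \E\Bigsqpar{\Bigpar{\sumjd \psi_j(u,1) f_j(X)}^2}\dd u.
\end{equation*}
The Gram matrix $C := \bigpar{\intoi \psi_i(u,1)\psi_j(u,1)\dd u}_{i,j=1}^d$ of the polynomials $u\mapsto\psi_j(u,1) = u^{j-1}(1-u)^{d-j}/[(j-1)!(d-j)!]$, $j=1,\dots,d$, is positive definite since these polynomials, being a Bernstein-type basis for polynomials of degree $\le d-1$, are linearly independent on $(0,1)$. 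Writing $\xi := (f_1(X),\dots,f_d(X))^T$, the display above reads $\gss = \E[\xi^T C \xi]$. Since $\xi^T C \xi \ge 0$ pointwise and $C$ is positive definite, $\gss = 0$ forces $\xi = 0$ \as{}, i.e.\ $f_j(X) = 0$ \as{} for every $j$; the converse is immediate from \eqref{c1var}.
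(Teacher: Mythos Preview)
Your argument for \eqref{c1} via $t=1$ in \refT{T1} plus Slutsky is correct and matches the paper exactly, and your degeneracy argument via the Gram matrix $C$ is correct and essentially the paper's argument (the paper phrases it as linear independence of the polynomials $\psi_i(\cdot,1)$ rather than positive definiteness of their Gram matrix, but these are equivalent).

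There is, however, a genuine gap in your treatment of the variance limit $\lim_n \Var(U_n)/n^{2d-1}=\gss$. You claim that uniform integrability of $\bigl\{(U_n-\binom nd\mu)^2/n^{2d-1}\bigr\}_n$ ``follows from $\Var(U_n)=O(n^{2d-1})$''. This is false: the bound $\Var(U_n)=O(n^{2d-1})$ says precisely that the second moments of $Y_n:=(U_n-\binom nd\mu)/n^{d-1/2}$ are bounded, and boundedness of $\E Y_n^2$ does \emph{not} imply uniform integrability of $Y_n^2$. Under the hypothesis $f\in L^2$ alone, you cannot appeal to higher moments either (the paper's \refL{LUp} needs $f\in L^p$ with $p>2$), so the UI route does not seem available here.

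The paper sidesteps this entirely: instead of deducing moment convergence from distributional convergence, it computes $\Var(U_n)$ directly (\refL{Lvar}), expanding $\E U_n^2$ as a double sum over index tuples, observing that only pairs sharing exactly one index contribute at order $n^{2d-1}$, and evaluating that contribution via \refL{LA} and the same Beta integral you used. This gives $\Var(U_n)/n^{2d-1}\to\Var(Z_1)$ with no appeal to uniform integrability.
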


\begin{example}
  For $d=1$, \refC{C1} reduces to the Central Limit Theorem; indeed,
  \eqref{c1var} then yields $\gss=\gs_{11}$.

For $d=2$, \eqref{c1var} yields
\begin{equation}\label{c1var2}
  \gss=\frac{\gs_{11}+\gs_{12}+\gs_{22}}{3}.
\end{equation}
\end{example}

\subsection{Renewal theory}

For $x>0$, 
let 
\begin{align}
\NN-(x)&:=\sup\set{n\ge0:U_n\le x},\label{NN-}
\\
\NN+(x)&:=\inf\set{n\ge0:U_n>x}.  \label{NN+}
\end{align}
Note that if $f\ge0$, then $\NN+(x)=\NN-(x)+1$, but if $f$ attains negative
values, then $\NN-(x)>\NN+(x)$ is possible. Most of our results apply to
both $\NN+$ and $\NN-$; we then use $\NN\pm$ to denote any of them.

The results above easily imply some renewal theorems for $U$-statistics
generalizing well-known results for $S_n$ (i.e., the case $d=1$).
We begin with a law of large numbers.
\begin{theorem}
  \label{TNN}
Suppose that $\fXXd\in L^1$ and $\mu>0$. 
Then \as{} $\NN\pm(x)<\infty$ for every $x<\infty$,
and 
\begin{align}\label{tnn}
\frac{\NN\pm(x)}{x^{1/d}} \asto \parfrac{d!}{\mu}^{1/d}
\qquad  \text{as \xtoo}.
\end{align}
\end{theorem}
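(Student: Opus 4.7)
The plan is to deduce \refT{TNN} directly from the law of large numbers \refT{TLLN}, by the classical renewal-theoretic reduction familiar from the case $d=1$. First I would observe that \eqref{tlln} together with $\binom{n}{d}\sim n^d/d!$ and the hypothesis $\mu>0$ yields $U_n\asto\infty$. Hence the set $\{n\ge 0:U_n\le x\}$ is a.s.\ bounded for every $x$, making both $\NN-(x)$ and $\NN+(x)$ a.s.\ finite; since $U_0=0$ and $U_n\to\infty$, on the same full-measure event we also have $\NN\pm(x)\to\infty$ as $x\to\infty$.

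The main step is the sandwich built into \eqref{NN-}--\eqref{NN+}: for the (a.s.\ finite) values $\NN\pm(x)$ one has
\begin{equation*}
U_{\NN-(x)}\le x<U_{\NN-(x)+1}
\qquad\text{and}\qquad
U_{\NN+(x)-1}\le x<U_{\NN+(x)}
\end{equation*}
(the second for $x$ large enough that $\NN+(x)\ge 1$). Writing $N:=\NN\pm(x)$ and applying \eqref{tlln} at $n=N$ and at $n=N\pm 1$, on the full-measure event where $U_n/\binom{n}{d}\to\mu$ and $N\to\infty$ both $U_N/(N^d/d!)$ and $U_{N\pm 1}/(N^d/d!)$ tend to $\mu$, so the sandwich forces $x/(N^d/d!)\to\mu$, which rearranges to \eqref{tnn}.

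The only bookkeeping issue is passing from a countable set of $x$'s to all $x\to\infty$ on a single null set, but this is immediate from the monotonicity of $\NN\pm(x)$ in $x$: it suffices to verify \eqref{tnn} along any sequence $x_k\to\infty$, which the previous paragraph accomplishes on a single a.s.\ event. I therefore expect no real obstacle here; only the LLN \refT{TLLN} is needed (not the functional CLT \refT{T1}), and the main care is simply to keep the $\pm$ cases straight.
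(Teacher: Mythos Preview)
Your proposal is correct and follows essentially the same argument as the paper: both use \refT{TLLN} to get $U_n\to\infty$ a.s., deduce finiteness and divergence of $\NN\pm(x)$, and then sandwich $x$ between $U_{\NN\pm(x)}$ and $U_{\NN\pm(x)\pm1}$ to obtain \eqref{tnn}. Your extra bookkeeping paragraph is harmless but unnecessary, since the single null set from \eqref{tlln} does not depend on $x$ and the argument is deterministic on its complement.
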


Assuming $f\in L^2$, we obtain also a central limit theorem for $\NN\pm$.

\begin{theorem}\label{TR}
 Suppose that $\fXXd\in L^2$ and $\mu>0$. 
Then, as \xtoo,
\begin{equation}\label{tr}
  \frac{\NN\pm(x)-\xpar{d!/\mu}^{1/d}x^{1/d}}{x^{1/2d}}
\dto N\Bigpar{0,\bigpar{\xfrac{d!}{\mu}}^{2+1/d}d\qww{ \gss}},
\end{equation}
where $\gss$ is given by \eqref{c1var}.
\end{theorem}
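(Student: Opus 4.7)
The approach is to apply the functional CLT of \refT{T1} and convert it, via a continuous-mapping argument for first passage times, into a CLT for $N_\pm(x)$. Set $\alpha := (d!/\mu)^{1/d}$ and $n_0 = n_0(x) := \alpha x^{1/d}$, so that $\mu n_0^d/d! = x$ and $\sqrt{n_0} = \alpha^{1/2} x^{1/(2d)}$. A short calculation gives $(d!/\mu)^{2+1/d}d\qww\gss = \alpha((d-1)!)^2\gss/\mu^2$, so it suffices to prove $(N_\pm(x)-n_0)/\sqrt{n_0} \dto N\bigpar{0, ((d-1)!)^2\gss/\mu^2}$.

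The central step is to linearize $U_m$ around $m=n_0$. Define
\begin{equation*}
V_x(s) := \bigpar{U_{\floor{n_0 + s\sqrt{n_0}}}-x}/n_0^{d-1/2}, \qquad s \in \bbR.
\end{equation*}
Writing $m=\floor{n_0 t}$ with $t = 1 + s/\sqrt{n_0}$, \refT{T1} together with continuity of $Z$ at $t=1$ yields $(U_m - \mu n_0^d t^d/d!)/n_0^{d-1/2} \dto Z_1$ uniformly in $s$ on compact intervals, while the Taylor expansion $\mu\sqrt{n_0}(t^d-1)/d! = \mu s/(d-1)! + O(1/\sqrt{n_0})$ gives $(\mu n_0^d t^d/d! - x)/n_0^{d-1/2} \to \mu s/(d-1)!$ uniformly on compacts. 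Hence $V_x \dto V$ on compact $s$-intervals, where $V(s) := Z_1 + \mu s/(d-1)!$ and $Z_1 \sim N(0,\gss)$ by \refC{C1}.

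Since $(N_+(x)-n_0)/\sqrt{n_0}$ is (up to rounding) the first passage time of $V_x$ through $0$, and the limit $V$ is continuous and strictly increasing with unique zero $s^* := -(d-1)!\,Z_1/\mu$, continuous mapping yields $(N_+(x)-n_0)/\sqrt{n_0} \dto s^* \sim N\bigpar{0, ((d-1)!)^2\gss/\mu^2}$ once tightness is verified. Tightness from above holds because $\P\bigpar{V_x(K) > 0} \to \P\bigpar{V(K) > 0} \to 1$ as $K, x\to\infty$, forcing $N_+(x) \le n_0 + K\sqrt{n_0}$ w.h.p. Tightness from below uses \refT{TNN} (giving $N_+(x) \ge (\alpha-\eta)x^{1/d}$ eventually a.s.) together with the fact that on $m \in [(\alpha-\eta)x^{1/d}, n_0-K\sqrt{n_0}]$ the deterministic shift $x - \mu m^d/d!$ exceeds $(K/2)\mu n_0^{d-1/2}/(d-1)!$ for $K$ large, dominating the fluctuation $\sup_m|U_m - \mu m^d/d!|=O_p(n_0^{d-1/2})$ supplied by \refT{T1}. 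Multiplication by $\alpha^{1/2}$ converts to the claimed $x^{1/(2d)}$-normalization.

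For $N_-(x)$ it suffices to show $(N_-(x)-N_+(x))/\sqrt{n_0} \pto 0$, exploiting the strict monotonicity of $V$: for any $\eps > 0$, the event $\{N_-(x)-N_+(x) > \eps\sqrt{n_0}\}$ requires some $m>N_+(x)+\eps\sqrt{n_0}$ with $U_m\le x$; restricting (w.h.p.) to $N_+(x) \in [n_0-K\sqrt{n_0}, n_0+K\sqrt{n_0}]$, this translates via the same scaling to $\inf_{s\in[s^*+\eps,\,2K]}V_x(s)\le 0$, vanishingly likely by uniform convergence since $V$ is strictly positive on this interval, plus an outer tail $m > n_0 + 2K\sqrt{n_0}$ ruled out by combining \refT{TNN} (which bounds $N_-(x) \le (\alpha+\eta)x^{1/d}$ eventually a.s.) with the drift-dominates-fluctuation argument already used. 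The main technical obstacle is precisely this gluing between the $\sqrt{n_0}$-window, governed by the functional CLT, and the outer region, governed by the LLN, handled by the uniform bound on fluctuations supplied by \refT{T1} on the whole interval $[0,T]$.
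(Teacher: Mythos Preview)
Your argument is correct but takes a different route from the paper's. You zoom into a $\sqrt{n_0}$-window around $n_0$, show that the recentred process $V_x$ converges to the random affine function $s\mapsto Z_1+\mu s/(d-1)!$, and then invoke continuous mapping for the first (respectively last) passage functional, which forces you to supply separate tightness arguments above and below and to glue the CLT window to the LLN region. The paper instead uses the classical random-time-substitution (Anscombe-type) device: it plugs the random time $T(x)=N_-(x)/\lfloor n(x)\rfloor$, which converges to $1$ by \refT{TNN}, directly into the functional limit \eqref{t1}, obtaining $\bigpar{U_{N_-(x)}-N_-(x)^d\mu/d!}/n(x)^{d-1/2}\dto Z_1$; the sandwich $U_{N_-(x)}\le x<U_{N_-(x)+1}$ then yields $\bigpar{x-N_-(x)^d\mu/d!}/n(x)^{d-1/2}\dto Z_1$ immediately, and the rest is algebra. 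The paper's approach is shorter and sidesteps all of your tightness bookkeeping, since the sandwich inequality does that work automatically; your approach, on the other hand, makes the local linear structure near $n_0$ more explicit and would adapt readily to other hitting-time functionals. One minor point: your treatment of $N_-$ via $(N_--N_+)/\sqrt{n_0}\pto0$ with the random interval $[s^*+\eps,2K]$ is a bit loose as written (since $s^*$ is random), but it becomes rigorous under Skorokhod coupling, or more simply by repeating the $N_+$ argument with the last-passage functional in place of the first-passage one.
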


A situation that is common in application is to stop when when one process
(such as our $U_n$) reaches a threshold, and then look at the value of
another process, say $\tU_n$. For standard renewal theory, \ie{} the case
$d=1$ in our setting, this was studied in \cite{SJ50}; we extend the main
result there to (asymmetric) $U$-statistics.
We consider as above an \iid{}  sequence $X_1,X_2,\dots$ with values in $S$, 
but we now have two functions $f:S^d\to\bbR$ and $\tf:S^{\td}\to\bbR$, where
the numbers of variables $d$ and $\td$ may be different.
We use notations as above for both $f$ and $\tf$, with
$\,\tilde{}\,$ to denote variables defined by $\tf$, for example
$\tU_n:=U_n(\tf)$ and $\tmu:=\E \tf$; we furthermore assume that the
Gaussian processes 
$W_i(t)$ and $\tW_j(t)$ 
have the joint distribution specified by \eqref{xul} (with obvious
notational changes), and thus \eqref{t1} holds jointly for $f$ and $\tf$
with limits $Z_t$ and $\tZ_t$.

\begin{theorem}
  \label{TVtau}
\begin{thmenumerate}
\item \label{TVtauas}
Suppose that $\fXXd\in L^1$,
$\tf(X_1,\dots,X_{\td})\in L^1$
and  $\mu>0$. 
Then, as \xtoo,
\begin{equation}\label{tvtau0}
  \begin{split}
  \frac{\tU_{\NN\pm(x)}}{x^{\td/d}}
\asto    
\frac{\tmu}{\td!}\Bigparfrac{d!}{\mu}^{\td/d}.
  \end{split}
\end{equation}

\item \label{TVtaud}
Suppose that $\fXXd\in L^2$,
$\tf(X_1,\dots,X_{\td})\in L^2$
and  $\mu>0$. 
Then, as \xtoo,
\begin{equation}\label{tvtau}
\frac{\tU_{\NN\pm(x)}-\bigparfrac{d!}{\mu}^{\td/d}\frac{\tmu}{\td!} x^{\td/d}}
{x^{\td/d-1/2d}} 
\dto N\bigpar{0,\gam^2},
\end{equation}
where, with $(Z_1,\tZ_1)$ as in \refT{T1},
\begin{equation}\label{tvtau2}
\gam^2:=\Bigparfrac{d!}{\mu}^{(2\td-1)/d}
\Var\Bigpar{\tZ_1-\frac{(d-1)!\,\tmu}{(\td-1)!\,\mu}Z_1}.
\end{equation}
\item \label{TVtau=0}
Assume the conditions in  \ref{TVtaud}.
If $\td\ge d$, then
$\gam^2=0$ if and only if
\begin{equation}\label{ll}
  \tf_i(X) = \frac{\tmu}{\mu} 
\sum_j \frac{\binom{d-1}{j-1}\binom{\td-d}{i-j}} {\binom{\td-1}{i-1}} f_j(X)
\text{ a.s.}, 
\quad i=1,\dots,\td.
\end{equation}
If $\td<d$, then $\gam^2=0$ if and only if \eqref{ll} holds with $f,d,\mu$ and
$\tf,\tf,\tmu$ interchanged
(and $\tmu\neq0$ unless all $\tf_j(X)=0$ a.s.)

In particular, if $\td=d$, then 
\begin{equation}\label{ll1}
  \gam^2=0 \iff \mu\tf_i(X)=\tmu f_i(X) \quad a.s.,\qquad i=1,\dots,d,
\end{equation}
and if $d=1$, then 
\begin{equation}\label{ll2}
  \gam^2=0 \iff \mu\tf_i(X)=\tmu f_1(X) \quad a.s.,\qquad i=1,\dots,\td.
\end{equation}
\end{thmenumerate}
\end{theorem}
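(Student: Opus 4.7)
My plan is to reduce all three parts to the joint functional CLT \refT{T1} applied to $(f,\tf)$, combined with the LLN \refT{TLLN} and the renewal results \refT{TNN}--\refT{TR}. Part \ref{TVtauas} is immediate: applying \refT{TLLN} to $\tf$ gives $\tU_n/\binom n{\td}\asto\tmu$, and substituting $n=\NN\pm(x)$ together with \refT{TNN} and $\binom{\NN\pm(x)}{\td}\sim\NN\pm(x)^{\td}/\td!$ yields \eqref{tvtau0}.

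For part \ref{TVtaud}, set $\nu(x):=(d!/\mu)^{1/d}x^{1/d}$, $n:=\floor{\nu(x)}$, and $t^*_x:=\NN\pm(x)/n$, so $t^*_x\asto 1$ by \refT{TNN}. The joint version of \refT{T1}, together with continuity of $(Z_t,\tZ_t)$, gives locally uniform convergence of
\begin{equation*}
\Bigpar{\frac{U_{nt}-n^dt^d\mu/d!}{n^{d-1/2}},\,\frac{\tU_{nt}-n^{\td}t^{\td}\tmu/\td!}{n^{\td-1/2}}}\dto(Z_t,\tZ_t),
\end{equation*}
and evaluating at $t=t^*_x$ produces $(Z_1,\tZ_1)$ in the limit. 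The overshoot satisfies $U_{\NN\pm(x)}-x=o_p(n^{d-1/2})$, since the jump $U_n-U_{n-1}$ has mean $\binom{n-1}{d-1}\mu=O(n^{d-1})$ (and smaller-order fluctuations), while $x-n^d\mu/d!=O(n^{d-1})$. Combining this with the $U$-component and the expansion $(t^*_x)^d-1=d(t^*_x-1)(1+o_p(1))$ yields $n^{1/2}(t^*_x-1)\dto-(d-1)!Z_1/\mu$ (recovering \refT{TR}). Expanding $(t^*_x)^{\td}-1=\td(t^*_x-1)(1+o_p(1))$ in the $\tU$-component then gives
\begin{equation*}
\frac{\tU_{\NN\pm(x)}-n^{\td}\tmu/\td!}{n^{\td-1/2}}=\tZ_1+\frac{\tmu}{(\td-1)!}n^{1/2}(t^*_x-1)+o_p(1)\dto\tZ_1-\frac{(d-1)!\,\tmu}{(\td-1)!\,\mu}Z_1,
\end{equation*}
and substituting $n^{\td-1/2}\sim(d!/\mu)^{(2\td-1)/(2d)}x^{\td/d-1/(2d)}$ and $n^{\td}\tmu/\td!=(d!/\mu)^{\td/d}\tmu x^{\td/d}/\td!+O(x^{(\td-1)/d})$ delivers \eqref{tvtau} with $\gam^2$ as in \eqref{tvtau2}.

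For part \ref{TVtau=0}, set $c:=(d-1)!\tmu/((\td-1)!\mu)$. By \eqref{Z} and It\^o's isometry with \eqref{xul},
\begin{equation*}
\Var(\tZ_1-cZ_1)=\int_0^1\E\Bigsqpar{\Bigpar{\sum_{i=1}^{\td}\psi_{i;\td}(s,1)\tf_i(X)-c\sum_{j=1}^d\psi_{j;d}(s,1)f_j(X)}^2}\dd s,
\end{equation*}
which vanishes iff the random polynomial in $s$ inside is identically zero a.s.\ (since if its $L^2$-norm vanishes for a.e.\ $s$, evaluating at $\max(\td,d)$ distinct points forces every coefficient to be zero a.s.). Since $\{\psi_{i;\td}(s,1)\}_{i=1}^{\td}$ is a basis for polynomials in $s$ of degree $\le\td-1$, when $\td\ge d$ I expand $\psi_{j;d}(s,1)=\psi_{j;d}(s,1)(s+(1-s))^{\td-d}$ in this basis and match coefficients to obtain exactly \eqref{ll}. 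The case $\td<d$ is symmetric with $f$ and $\tf$ swapped; if $\tmu=0$ then $c=0$ and $\gam^2=0$ forces all $\tf_j(X)=0$ a.s.\ directly. The specializations \eqref{ll1} and \eqref{ll2} follow from the collapses $\binom{0}{i-j}=\delta_{ij}$ and $\binom{d-1}{0}=1$. The main obstacle throughout is the rigorous substitution of the random time $t^*_x$ into the Skorohod-valued prelimit; this hinges on the continuity of $(Z_t,\tZ_t)$, which upgrades \refT{T1} to locally uniform convergence, so that a Skorohod-coupling plus converging-together argument justifies passing the overshoot and the Taylor-expansion steps to the limit jointly.
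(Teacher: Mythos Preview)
Your argument is correct and matches the paper's proof in all three parts: the same LLN substitution for \ref{TVtauas}, the same joint-functional-limit/random-time-substitution/Taylor-expansion scheme for \ref{TVtaud}, and the same It\^o-isometry followed by the $(s+(1-s))^{\td-d}$ expansion for \ref{TVtau=0}. The one place where your writeup is looser than the paper is the overshoot bound $U_{\NN\pm(x)}-x=o_p(n^{d-1/2})$: invoking the mean of a single jump is not quite sufficient at a random stopping index, but as you correctly note at the end, the clean route is to evaluate the functional limit at both $t^*_x$ and $t^*_x\pm1/n$ and subtract, which is exactly how the paper obtains \eqref{dumdee} from \eqref{dum}--\eqref{dee}.
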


\begin{remark}\label{RVtau}
  \refT{TR} can be regarded as a special case of \refT{TVtau} with $\td=1$
  and $\tf(X)\equiv 1$.
\end{remark}

The asymptotic variance $\gam^2$ in \refT{TVtau}
can easily be calculated exactly using
\eqref{Z}, 
\eqref{xul} and \eqref{psi}, but a general formula seems more messy than
illuminating, and we state only the special case $d=1$.
(In this case, $U_n$ is the standard partial sum $\sumin f(X_i)$.)

\begin{theorem}\label{CVtau}
Suppose that $f(X)\in L^2$,
$\tf(X_1,\dots,X_{\td})\in L^2$
and  $\mu>0$. 
Then, as \xtoo,
\begin{equation}\label{cvtau}
\frac{\tU_{\NN\pm(x)}-{\mu}^{-\td}{\tmu}{\td!}\qw x^{\td}}
{x^{\td-1/2}} 
\dto N\bigpar{0,\gam^2},
\end{equation}
where
\begin{align}\label{cvtau2}
    \gam^2
&:=
{\mu}^{1-2\td}
\sum_{i,j=1}^{\td}
\frac{(i+j-2)!\,(2\td-i-j)!}{(i-1)!\,(j-1)!\,(\td-i)!\,(\td-j)!\,(2\td-1)!}
\Cov\bigpar{\tf_i(X),\tf_j(X)}
\notag\\&\qquad
-2\frac{{\mu}^{-2\td}\tmu}{(\td-1)!\,\td!}\sum_{i=1}^{\td}
\Cov\bigpar{f(X),\tf_i(X)}
\notag\\&\qquad
+ \frac{{\mu}^{-2\td-1}\tmu^2}{(\td-1)!^2}\Var\bigpar{f(X)}.
\end{align}
Moreover, 
\begin{equation}\label{ll22}
  \gam^2=0 \iff \mu\tf_i(X)=\tmu (f(X)-\mu) \quad a.s.,\qquad i=1,\dots,\td.
\end{equation}
\end{theorem}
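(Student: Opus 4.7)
The plan is to specialize Theorem~\ref{TVtau} to $d=1$ and then expand the asymptotic variance \eqref{tvtau2} into the explicit form \eqref{cvtau2}.

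With $d=1$ we have $d!=(d-1)!=1$, so the scaling and centering in \eqref{tvtau} reduce to those displayed in \eqref{cvtau}, and \eqref{tvtau2} becomes
\begin{equation*}
\gam^2 = \mu^{1-2\td}\Var\Bigpar{\tZ_1 - \tfrac{\tmu}{(\td-1)!\,\mu}\,Z_1}.
\end{equation*}
Expanding this bilinearly gives three contributions that I would evaluate in turn. For $\Var(\tZ_1)$, I would apply \refC{C1} to $\tf$ in dimension $\td$, which gives precisely the double sum that appears as the first summand of \eqref{cvtau2}. For $\Var(Z_1)$, the fact that $d=1$ forces $\psi_{1;1}(s,t)\equiv 1$ in \eqref{psi}, so \eqref{Z} gives $Z_1=W_1(1)$ with variance $\gs_{11}=\Var(f_1(X))=\Var(f(X))$, since $f_1(X)=f(X)-\mu$ when $d=1$ by the standard Hoeffding projection. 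Multiplying by $\mu^{1-2\td}\,\tmu^2/((\td-1)!\,\mu)^2$ produces the third term of \eqref{cvtau2}.

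For the cross term I would use the It\^o representation \eqref{Z}, together with $Z_1=W_1(1)$ and the joint covariance structure \eqref{xul} linking the Gaussian processes $\tW_j$ and $W_1$. The covariance formula for It\^o integrals with respect to processes with stationary independent increments yields
\begin{equation*}
\Cov(\tZ_1,Z_1) = \sum_{j=1}^{\td} \Cov\bigpar{\tf_j(X),f(X)}\int_0^1 \psi_{j;\td}(s,1)\,ds,
\end{equation*}
and by \eqref{psi} each integral is a Beta integral equal to $1/\td!$. Substituting this into the expansion, with the prefactor $-2\mu^{1-2\td}\tmu/((\td-1)!\,\mu)$, gives the middle summand of \eqref{cvtau2}, completing the derivation of the variance formula.

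Finally, the degeneracy claim \eqref{ll22} follows from part~\ref{TVtau=0} of \refT{TVtau} specialized to $d=1$: condition \eqref{ll2} then reads $\mu\tf_i(X) = \tmu f_1(X)$ \as{} for $i=1,\dots,\td$, and substituting $f_1(X)=f(X)-\mu$ converts it to \eqref{ll22}. The main obstacle in the argument is purely bookkeeping, namely keeping the $\mu$-powers and factorials aligned when distributing $\mu^{1-2\td}$ across the three summands; the only substantive computation is the cross-term covariance, and that follows directly from the It\^o isometry once one recognizes the Beta integral arising from \eqref{psi}.
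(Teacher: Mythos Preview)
Your proposal is correct and follows essentially the same route as the paper: specialize \refT{TVtau}\ref{TVtaud} to $d=1$, expand \eqref{tvtau2} into three terms, evaluate $\Var(\tZ_1)$ via \eqref{c1var}, $\Var(Z_1)=\Var f(X)$ via $\psi_{1;1}\equiv1$, and the cross term via the It\^o isometry and the Beta integral $\int_0^1\psi_{j;\td}(s,1)\,ds=1/\td!$. You also handle the degeneracy condition \eqref{ll22} explicitly by reducing it to \eqref{ll2} and substituting $f_1(X)=f(X)-\mu$, which the paper's proof leaves implicit.
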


Continue to assume that $d=1$, and assume for simplicity that
$Y:=f(X)\ge0$ a.s.
Thus $U_n(f)=S_n(f):=\sum_1^n Y_i$ is a renewal process, 
and its \emph{overshoot} (\emph{residual life time}) is
\begin{equation}\label{R}
  R(x):=U_{\NN+(x)}-x>0.
\end{equation}
A classical result, see \eg{} \cite[Theorem 2.6.2]{Gut-SRW},
says that if $0<\mu<\infty$, then $R(x)$ converges in distribution.
Recall that (the distribution of) $Y$ has \emph{span} $d>0$ if $Y\in d\bbZ$
a.s., and $d$ is maximal with this property, and that (the distribution of) $Y$
is \nona{} if no such $d$ exists.
\begin{proposition}[e.g.\ \cite{Gut-SRW}]\label{PR}
Let $R(x)$ be given by \eqref{R}, and assume that $f(X)\ge0$ a.s.
  \begin{romenumerate}
  \item 
If $f(X)$ is \nona, then $R(x)\dto\Roo$ as \xtoo, with
    \begin{equation}
      \label{overa}
\P\bigpar{\Roo\le y}
=
\frac{1}{\mu}\int_0^y \P\bigpar{f(X)>s}\dd s,
\qquad y\ge0.
    \end{equation}
  \item 
If $f(X)$ has span $d>0$, then $R(x)\dto\Roo$  as \xtoo{} with
$x\in d\bbZ$, with
    \begin{equation}
      \label{overd}
\P\bigpar{\Roo=kd}
=\frac{d}{\mu} \P\bigpar{f(X)\ge kd},
\qquad k\ge1.
    \end{equation}
  \end{romenumerate}
\nopf
\end{proposition}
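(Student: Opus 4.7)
The setting $d=1$ reduces $U_n$ to the standard random walk $S_n=\sum_{i=1}^n Y_i$ with $Y:=f(X)\ge0$ and $\E Y=\mu\in(0,\infty)$, so the plan is to invoke classical renewal theory through the renewal equation. For fixed $y\ge0$, set $H(x):=\P(R(x)>y)$ and condition on $Y_1$: if $Y_1>x+y$ the first jump already overshoots by more than $y$; otherwise the walk restarts from $Y_1\le x+y$. This yields
\begin{equation*}
H(x) = \P(Y>x+y) + \int_{[0,x]} H(x-s)\,dF(s),
\end{equation*}
with unique bounded solution $H=g*U$, where $g(x):=\P(Y>x+y)$, $F$ is the law of $Y$, and $U(ds):=\sumjo\P(S_j\in ds)$ is the renewal measure.

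For part (i), $g$ is nonincreasing with $\int_0^\infty g(s)\,ds=\int_y^\infty\P(Y>u)\,du\le\mu<\infty$, so $g$ is directly Riemann integrable. The key renewal theorem (Feller--Smith) then gives $H(x)\to\mu\qw\int_y^\infty\P(Y>u)\,du$ as $\xtoo$, and taking complements together with $\int_0^\infty\P(Y>u)\,du=\mu$ yields \eqref{overa}.

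For part (ii), after rescaling take $d=1$ and $Y\in\bbZgeo$. Splitting $\{R(n)=k\}$ with $k\ge1$ according to the location $m\le n$ of the last renewal $\le n$ and the value $n-m+k$ of the next jump gives
\begin{equation*}
\P(R(n)=k)=\sum_{m=0}^n u(m)\,\P(Y=n-m+k)=\sum_{\ell\ge k}u(n+k-\ell)\,\P(Y=\ell),
\end{equation*}
where $u(m):=\sumjo\P(S_j=m)$. Blackwell's theorem gives $u(m)\to 1/\mu$ as $\mtoo$, and since $u$ is bounded and $\sum_\ell\P(Y=\ell)=1$, dominated convergence yields $\P(R(n)=k)\to\mu\qw\P(Y\ge k)$; restoring the span $d$ gives \eqref{overd}. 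There is no real obstacle here: the content is entirely classical once the renewal equation is in place, and both the direct Riemann integrability needed in (i) and the boundedness of the renewal function needed in (ii) follow immediately from $\mu<\infty$.
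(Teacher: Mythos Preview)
Your proof is correct and follows the standard textbook derivation via the renewal equation and the key renewal theorem (nonarithmetic case) and the Erd\H os--Feller--Pollard/Blackwell theorem (lattice case). Note, however, that the paper does not actually prove this proposition: it is stated as a classical result with a reference to \cite{Gut-SRW}, and the \verb|\nopf| marker simply places a QED box with no argument. So there is no ``paper's proof'' to compare against; you have supplied exactly the kind of classical argument the citation points to.

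One minor wording issue: in part (i) you write ``otherwise the walk restarts from $Y_1\le x+y$'', but the restart occurs only when $Y_1\le x$; the intermediate range $x<Y_1\le x+y$ gives $R(x)=Y_1-x\le y$ and contributes $0$ to $H(x)$. Your displayed equation, with the convolution integral over $[0,x]$, is nonetheless correct, so this is purely a slip in the prose.
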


This classical result may be combined with \refT{CVtau} as follows.

\begin{theorem}\label{TO}
  Suppose in addition to the assumptions of \refT{CVtau} that $f(X)\ge0$
  a.s.
Let $\Roo$ be as in \refP{PR}.
  \begin{romenumerate}
  \item \label{TOa}
If $f(X)$ is \nona, then \eqref{cvtau} and $R(x)\dto \Roo$  hold jointly as
\xtoo.    
  \item \label{TOb}
If $f(X)$ has span $d>0$, then \eqref{cvtau} and $R(x)\dto\Roo$  hold jointly as
\xtoo{} with $x\in d\bbZ$.
\item \label{TOc}
If $f(X)$ is integer-valued, then for every fixed integer $k\ge1$,
\eqref{cvtau} holds also conditioned on $R(x)=k$, for integers $x=\ntoo$.
Moreover, \eqref{cvtau} holds also conditioned on $U_{\NN-(x)}=x$,
as $x=\ntoo$.
(We consider only $x$ such that we condition on an event of positive
probability.)
\end{romenumerate}
\end{theorem}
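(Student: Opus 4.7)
The plan is to decouple the macroscopic fluctuation of $\tU_{\NN\pm(x)}$ from the local crossing behavior that determines $R(x)$, via a two-stage truncation. Choose $M=M(x)$ with $M(x)\to\infty$ and $M(x)=o(x^{1/2})$ (\eg{} $M(x)=\log x$), and set $\tau':=\NN+(x-M)$. The variables $X_{\tau'+1},X_{\tau'+2},\dots$ are iid copies of $X$ independent of $\cF_{\tau'}$; they define a fresh renewal $U'_n:=U_{\tau'+n}-U_{\tau'}$, with the same distribution as $(U_n)$, independent of $\cF_{\tau'}$.

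The first step shows that the normalized fluctuation in \eqref{cvtau} is, up to $\op(1)$, $\cF_{\tau'}$-measurable. Standard renewal estimates give $\tau-\tau'=\Op(M)$, and the bound $\norm{\tU_n-\tU_{n-1}}=O(x^{\td-1})$ for $n$ of order $x$ yields $\tU_\tau-\tU_{\tau'}=\op(Mx^{\td-1})=\op(x^{\td-1/2})$; the shift in the centring $\mu^{-\td}\tmu(\td!)^{-1}\bigsqpar{x^\td-(x-M)^\td}=O(Mx^{\td-1})$ is likewise negligible. Applying \refT{CVtau} at level $x-M$ then gives
\begin{equation*}
\frac{\tU_\tau-\mu^{-\td}\tmu(\td!)^{-1}x^\td}{x^{\td-1/2}}
=\frac{\tU_{\tau'}-\mu^{-\td}\tmu(\td!)^{-1}(x-M)^\td}{(x-M)^{\td-1/2}}+\op(1)\dto N(0,\gam^2),
\end{equation*}
with the leading term on the right $\cF_{\tau'}$-measurable.

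For \ref{TOa} and \ref{TOb}, set $D:=x-U_{\tau'}=M-R(x-M)$. Since $R(x-M)$ is tight by \refP{PR}, $D\to\infty$ in probability (and $D$ lies in the appropriate lattice whenever $x$ and $M$ do). With probability tending to $1$, $D\ge0$ and then $R(x)=R'(D)$, where $R'(y):=U'_{N(y)}-y$ with $N(y):=\inf\{n:U'_n>y\}$ is the fresh overshoot. Since $R'$ is independent of $\cF_{\tau'}$, $R'(y)\dto\Roo$ as $y\to\infty$ by \refP{PR}, and $D\to\infty$ in probability, we conclude that $R(x)\dto\Roo$ jointly with, and asymptotically independent of, the $\cF_{\tau'}$-measurable fluctuation of Step~1. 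This proves \ref{TOa} and \ref{TOb}.

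For \ref{TOc}, observe that $\P\bigpar{R(x)=k\mid\cF_{\tau'}}=h(D)$, where $h(y):=\P(R'(y)=k)$ is a deterministic function and $h(y)\to\P(\Roo=k)$ as $y\to\infty$ along the integers by \refP{PR}; hence $h(D)\to\P(\Roo=k)$ in probability. Writing $V_x$ for the left-hand side of \eqref{cvtau} and taking any bounded continuous $\psi$,
\begin{equation*}
\E\bigsqpar{\psi(V_x)\,\ett{R(x)=k}}=\E\bigsqpar{\psi(V_x)\,h(D)}\tend\P(\Roo=k)\cdot\E\psi(Z),
\end{equation*}
with $Z\sim N(0,\gam^2)$, because $V_x\dto Z$ and $h(D)$ is asymptotically a deterministic constant. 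Dividing by $\P(R(x)=k)\to\P(\Roo=k)$ yields the conditional CLT. The case $\{U_{\NN-(x)}=x\}$ is handled identically with $h(y)$ replaced by $\P(y\in\{U'_n\})$, which converges by the elementary renewal theorem in the arithmetic case. The principal technical point is Step~1: verifying that $\tU_\tau-\tU_{\tau'}$ is genuinely $\op(x^{\td-1/2})$, which relies on $L^2$ bounds for increments of the $U$-statistic of the kind used already in the proof of \refT{CVtau}.
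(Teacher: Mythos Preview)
Your overall strategy matches the paper's: introduce an intermediate threshold $x-M$ with $M\to\infty$ slowly (the paper takes $M=\ln x$), stop at $\tau'=\NN+(x-M)$, and use the strong Markov property to decouple the $\cF_{\tau'}$-measurable fluctuation from the fresh overshoot. Your treatment of parts \ref{TOa}--\ref{TOc} via the conditional probability $h(D)$ is also correct and essentially equivalent to the paper's, which just observes that joint convergence to independent limits gives the conditional statement (and reduces $\{U_{\NN-(n)}=n\}$ to $\{R(n-1)=1\}$). One minor omission: you do not address the case where the span of $f(X)$ exceeds $1$ in \ref{TOc}; the paper shows how to shift $x$ and $k$ to reduce to span $1$.

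There is, however, a genuine gap in Step~1. You assert that $\norm{\tU_n-\tU_{n-1}}=O(x^{\td-1})$ together with $\tau-\tau'=O_p(M)$ yields $\tU_\tau-\tU_{\tau'}=o_p(x^{\td-1/2})$, but you cannot simply multiply an $L^2$ bound on a single increment by a random number of increments: the increments $\tU_n-\tU_{n-1}$ depend on all of $X_1,\dots,X_n$ and are correlated with $\tau,\tau'$ through the shared variables, so no Wald-type identity is available. The paper handles this instead by a monotonicity trick: first assume $\tf\ge0$, so that $\tU_{\NN\pm(x_-)}\le\tU_{\NN\pm(x)}$; since both normalized fluctuations converge in distribution to the \emph{same} law $N(0,\gam^2)$ and one dominates the other, their difference must converge to $0$ in probability; then extend to general $\tf$ by writing $\tf=\tf_+-\tf_-$. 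Alternatively, your claim can be salvaged via the functional limit \refT{T1}: since $\tau/x\to\mu\qw$ and $\tau'/x\to\mu\qw$ jointly with the $D\ooo$-convergence of the centred process to the continuous limit $\tZ$, the continuous mapping theorem shows the centred values at $\tau$ and $\tau'$ converge jointly to the \emph{same} random variable $\tZ_{1/\mu}$, whence their difference is $o_p(x^{\td-1/2})$. Either route closes the gap; the $L^2$-increment bound alone does not.
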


Note that in \ref{TOc}, the event $U_{\NN-(x)}=x$ holds if and only if some
partial sum $U_n:=\sum_1^n f(X_i)=x$.

\begin{remark}
  If $d=\td=1$, \eqref{cvtau2} reduces to
$\gam^2=\mu^{-3}\Var\bigpar{\mu\tf(X)-\tmu f(X)}$, 
as shown in \cite[Theorem 3]{SJ50}.
\end{remark}

\subsection{Moment convergence}\label{SSmoments}
In \refC{C1}, we have convergence of the second moment in \eqref{c1}, and
trivially also of the first moment.
We have also convergence of higher moments, provided we assume the
corresponding integrability of $f$.

\begin{theorem}
  \label{TUp}
Suppose that $\fXXd\in L^p$ with $p\ge2$. 
Then, \eqref{c1} holds with convergence of all moments and absolute moments
of order $\le p$.
\end{theorem}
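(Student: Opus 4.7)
The plan is to combine the distributional convergence from \refC{C1} with uniform integrability of $\set{|V_n|^p : n\ge 1}$, where $V_n := \bigpar{U_n - \binom{n}{d}\mu}/n^{d-1/2}$. The key ingredient is a Rosenthal--Marcinkiewicz--Zygmund-type moment inequality, after which moment convergence is extracted by a truncation argument.

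First, I would establish the moment inequality
\begin{equation*}
\normx{q}{U_n - \E U_n} \le C_{d,q}\, n^{d-1/2}\, \normx{q}{f},
\qquad n\ge 1,\; q\ge 2,\; f\in L^q,
\end{equation*}
using the Hoeffding projection method already employed in the paper. Writing $f = \sum_{A\subseteq[d]} g_A$ with each $g_A$ depending only on $(x_i)_{i\in A}$ and being completely degenerate (so $\E[g_A(X_A) \mid X_B] = 0$ whenever $B\subsetneq A$) gives
\begin{equation*}
U_n - \E U_n = \sum_{\emptyset \neq A\subseteq[d]} U_n^{(A)},
\end{equation*}
where each $U_n^{(A)}$ is a canonical $U$-statistic with coefficients of size $O(n^{d-|A|})$. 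An iterated Burkholder--Davis--Gundy argument for the filtration $\set{\cF_n}$ then yields $\normx{q}{U_n^{(A)}} \le C\, n^{d-|A|/2}\, \normx{q}{g_A} \le C\, n^{d-1/2}\, \normx{q}{f}$, since $|A|\ge 1$, and summing over $A$ closes the bound.

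Next, given $f\in L^p$ and $c>0$, split $f = f^{(c)} + f_{(c)}$ with $f^{(c)} := f\,\ett{|f|\le c}$. Since $f^{(c)}$ is bounded, the inequality applied with $q=2p$ gives $\sup_n \normx{2p}{V_n^{(c)}} < \infty$, so $\set{|V_n^{(c)}|^p : n\ge 1}$ is uniformly integrable for each fixed $c$. Applying the inequality instead to $f_{(c)}$ yields $\sup_n \normx{p}{V_n - V_n^{(c)}} \le C\, \normx{p}{f_{(c)}} \to 0$ as $c\to\infty$ by dominated convergence, and a triangle inequality in $L^p$ upgrades this to uniform integrability of $\set{|V_n|^p : n\ge 1}$. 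Combined with $V_n\dto N(0,\gss)$ from \refC{C1}, this yields $\E V_n^p \to \E N(0,\gss)^p$ and $\E|V_n|^p \to \E|N(0,\gss)|^p$; for any $q\le p$, $|V_n|^q\le 1+|V_n|^p$ supplies uniform integrability of $\set{|V_n|^q}$, so all moments and absolute moments of order at most $p$ converge.

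The main obstacle is the moment inequality above. In the symmetric case it reduces to classical Rosenthal-type bounds for canonical $U$-statistics, but in the asymmetric setting the coefficients multiplying a canonical kernel $g_A$ depend on which positions in $\set{1,\dots,d}$ the indices of $A$ occupy, so $U_n^{(A)}$ is not a symmetric $U$-statistic. One has to verify, via a direct martingale-difference computation using the jumps $U_n - U_{n-1}$ together with BDG, that the constant depends only on $d$ and $q$, uniformly in $n$ and $f$.
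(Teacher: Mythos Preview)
Your overall architecture---moment inequality, truncation, uniform integrability---matches the paper's proof exactly, including the splitting $f=f\ett{|f|\le M}+f\ett{|f|>M}$ and the passage from $\sup_n\normx{2p}{V_n'}<\infty$ and $\normp{V_n''}\to0$ to uniform integrability of $|V_n|^p$ (the paper isolates this last step as a separate lemma, \refL{Lui}).

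The one genuine difference is how the moment inequality is obtained. You propose the full Hoeffding decomposition $f=\sum_{A\subseteq[d]}g_A$ into completely degenerate pieces and then an iterated BDG argument on each $U_n^{(A)}$; as you correctly flag, in the asymmetric case $U_n^{(A)}$ carries position-dependent combinatorial weights, so this is not a plain canonical $U$-statistic and the iteration needs care. The paper sidesteps this entirely by using a \emph{sequential} decomposition instead: with $\FF_k(X_1,\dots,X_k)=\E\bigpar{f(X_1,\dots,X_d)\mid X_1,\dots,X_k}$ and $F_k=\FF_k-\FF_{k-1}$, one has $f-\mu=\sum_{k=1}^d F_k$ and, after a summation by parts, $|U_n(f-\mu)|\le\sum_k n^{d-k}U_n^*(F_k)$. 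Each $U_n(F_k)$ is already a martingale in $n$ (since $\E\bigpar{F_k\mid X_1,\dots,X_{k-1}}=0$), so a \emph{single} application of Burkholder's inequality to the increments $\gD U_n(F_k)$, bounded in $L^p$ by Minkowski, gives $\normp{U_n^*(F_k)}\le C_p n^{k-1/2}\normp f$ directly (this is the paper's \refL{LUp}). No iteration, no weighted canonical kernels, and one gets the maximal bound $\normp{U_n^*(f-\mu)}\le C_p n^{d-1/2}\normp f$ for free, which the paper reuses later in the renewal-theoretic moment estimates. Your route would work, but the sequential decomposition is both shorter and avoids precisely the obstacle you identify.
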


For moment convergence in the renewal theory theorems, we assume for
simplicity that $f$ and $\tf$ have finite moments of all orders;
see also \refR{Rmom}.
(For the case $d=\td=1$, see \eg{} 
\cite{SJ52}, \cite{SJ50}, and 
\cite[Section 3.8 and Theorem 4.2.3]{Gut-SRW}.)

\begin{theorem}\label{TRp}
Suppose that $\fXXd\in L^p$ for every $p<\infty$, and thet $\mu>0$.
Then, \eqref{tnn} and
\eqref{tr} hold with convergence of all moments and absolute moments.
In particular, as \xtoo, 
\begin{align}\label{trpe}
\E\NN\pm(x)&\sim \Bigparfrac{d!}{\mu}^{1/d}x^{1/d},
\\
  \Var\NN\pm(x)&\sim \bigpar{\xfrac{d!}{\mu}}^{2+1/d}d\qww{ \gss} x^{1/d}. 
\label{trpv}
\end{align}
\end{theorem}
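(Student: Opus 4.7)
The plan is to combine the distributional statements of \refT{TNN} and \refT{TR} with uniform integrability of every power of the standardized variables. With $n(x):=(d!/\mu)^{1/d}x^{1/d}$, I aim to show that the tail bound
\begin{equation*}
\P\Bigpar{\bigabs{\NN\pm(x)-n(x)}\ge\lambda x^{1/(2d)}}\le C_p\lambda^{-p}
\end{equation*}
holds for every $p$ and every $\lambda\ge 1$, $x\ge 1$. This at once gives uniform integrability of every power of $(\NN\pm(x)-n(x))/x^{1/(2d)}$ and of $\NN\pm(x)/x^{1/d}$, upgrades \refT{TR} and \refT{TNN} to convergence of all moments and absolute moments, and in particular yields \eqref{trpe} and \eqref{trpv}.

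The main quantitative input is the $L^p$-estimate $\bignorm{U_n-\binom{n}{d}\mu}_p\le C_{p,f}\,n^{d-1/2}$ for every $p\ge 2$, which underlies \refT{TUp} and follows from the Hoeffding-type decomposition of $U_n-\binom{n}{d}\mu$ into degenerate asymmetric $U$-statistics of orders $1,\dots,d$, combined with Rosenthal/Burkholder inequalities applied layer by layer. For the upper-tail argument on $\NN-$ I also need the maximal strengthening
\begin{equation*}
\Bignorm{\max_{m\le n}\bigabs{U_m-\tbinom{m}{d}\mu}}_p\le C_{p,f}\,n^{d-1/2},
\end{equation*}
which follows by applying a Doob-type maximal inequality layer by layer (with the ordering of arguments fixed, each Hoeffding layer is a martingale in $n$).

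Three of the four tails are then short. For $\NN+(x)$, the event $\set{\NN+(x)>n}$ is contained in $\set{U_n\le x}$; a Taylor expansion at $n=\ceil{n(x)+\lambda x^{1/(2d)}}$ gives $\binom{n}{d}\mu-x\ge c\lambda x^{1-1/(2d)}$, so Chebyshev on the moment bound yields the required $C_p\lambda^{-p}$. The lower tail of $\NN+$ is symmetric, and the same argument bounds the lower tail of $\NN-$ since $\set{\NN-(x)<n}=\set{U_m>x\ \forall m\ge n}\subset\set{U_n>x}$. The main obstacle is the upper tail of $\NN-(x)$, which requires estimating $\P\bigpar{\exists m\ge n:U_m\le x}$; here a term-by-term union bound over $m$ is too wasteful by a factor of order $x^{1/(2d)}$. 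I would instead partition the range $[M,\infty)$ with $M:=\ceil{n(x)+\lambda x^{1/(2d)}}$ into dyadic blocks $[2^k M,2^{k+1}M]$, apply the maximal inequality on each block, and exploit the deterministic lower bound on $\binom{m}{d}\mu-x$ (which only grows as $m$ does) uniformly throughout the block. Choosing $p$ large, which is permitted because $f$ has all moments, and summing the resulting geometric series in $k$ gives the desired $C_p\lambda^{-p}$.

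The hard part is this maximal-inequality plus dyadic argument for the upper tail of $\NN-$; once the polynomial tail bound is in hand, the remaining deduction of moment convergence from uniform integrability, together with the distributional convergences already proved in \refT{TNN} and \refT{TR}, is standard.
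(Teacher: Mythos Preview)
Your proposal is correct and uses the same two essential ingredients as the paper: the maximal $L^p$ bound $\|U^*_n(f-\mu)\|_p\le C_{p,f}n^{d-1/2}$ (this is exactly Lemma~\ref{LUp}, proved via the Hoeffding-type decomposition and Burkholder--Doob as you indicate), and a dyadic decomposition to control large values of $\NN-(x)$. The deduction of moment convergence from uniform integrability is then the same.

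The organization differs. You work with tail probabilities and split into four cases (upper/lower tails of $\NN\pm$), three of which are handled by a single Chebyshev application and the fourth by the dyadic argument. The paper instead works directly with moments: from $U_{\NN-(x)}\le x<U_{\NN-(x)+1}$ it extracts the pathwise inequality
\[
\Bigl|x-\NN-(x)^{d}\tfrac{\mu}{d!}\Bigr|\le U^*_{\NN-(x)+1}(f-\mu)+C_f\,\NN-(x)^{d-1},
\]
which bounds both tails at once; it then decomposes $\E|x-\NN-(x)^d\mu/d!|^p$ over dyadic shells $\{\NN-(x)\in(An(x),2An(x)]\}$, sums, and finally uses an elementary lemma (Lemma~\ref{Lpq}) to pass from $|\NN-(x)^d-n(x)^d|$ to $|\NN-(x)-n(x)|$. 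The paper's route is slightly more compact since the pathwise inequality avoids your case split, but your tail-bound argument is equally valid and perhaps more transparent about where the difficulty lies (namely the upper tail of $\NN-$). One minor point: your lower tail of $\NN+$ is not literally a single-point event but $\{\exists m<n:U_m>x\}$, so it also needs the maximal inequality (though no dyadic step); this is harmless since you already invoke that inequality.
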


\begin{theorem}\label{TVp}
Suppose that $\fXXd\in L^p$ and
$\tf(X_1,\dots,X_{\td})\in L^p$ for every $p<\infty$,
and  that $\mu>0$. 
Then, \eqref{tvtau0} and
\eqref{tvtau} hold with convergence of all moments and absolute moments.
In particular, as \xtoo,
\begin{align}\label{tvpe}
\E\tU_{\NN\pm(x)}&\sim \frac{\tmu}{\td!}\Bigparfrac{d!}{\mu}^{\td/d}
x^{\td/d},
\\
  \Var\tU_{\NN\pm(x)}&\sim \gam^2 x^{(2\td-1)/d}. \label{tvpv}
\end{align}
\end{theorem}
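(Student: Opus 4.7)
Distributional convergence is already in hand from \refT{TVtau}, so it suffices to prove that for every $p<\infty$ the normalized left-hand sides of \eqref{tvtau0} and \eqref{tvtau} are bounded in $L^p$ uniformly in $x$. This gives uniform integrability of every power of the corresponding rescaled variables, hence convergence of all moments, and then \eqref{tvpe}--\eqref{tvpv} are immediate consequences.

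The ingredients I would assemble are three: (a)~\refT{TRp}, which gives $\normx{p}{\NN\pm(x)-\nu(x)}=O(x^{1/(2d)})$ for every $p$, where $\nu(x):=(d!/\mu)^{1/d}x^{1/d}$; (b)~\refT{TUp} applied to $\tf$, giving $\normx{p}{\tU_n-\binom{n}{\td}\tmu}=O(n^{\td-1/2})$; and (c)~the crude bound $\normx{p}{\tU_n}\le C_p n^{\td}$, which follows from Minkowski applied termwise to \eqref{U} with $\tf$ in place of $f$. Setting $n_0:=\floor{\nu(x)}$, I decompose
\begin{equation*}
  \tU_{\NN\pm(x)} - \tfrac{\tmu}{\td!}\Bigparfrac{d!}{\mu}^{\td/d} x^{\td/d}
 = \bigpar{\tU_{\NN\pm(x)}-\tU_{n_0}} + \Bigpar{\tU_{n_0}-\binom{n_0}{\td}\tmu} + \Bigpar{\binom{n_0}{\td}\tmu - \tfrac{\tmu}{\td!}\nu(x)^{\td}}.
\end{equation*}
The middle summand is $O_{L^p}(x^{(\td-1/2)/d})$ by (b), and the last is deterministic and of lower order $O(x^{(\td-1)/d})$. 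The first summand carries the renewal fluctuation and requires the real work.

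The heart of the proof is thus the increment bound $\normx{p}{\tU_{\NN\pm(x)}-\tU_{n_0}}=O(x^{\td/d-1/(2d)})$ for every $p$. I would derive this from a maximal estimate of the type
\begin{equation*}
  \bignorm{\max_{|n-n_0|\le M}|\tU_n-\tU_{n_0}|}_p \le C_p\, n_0^{\td-1} M^{1/2}, \qquad 1 \le M \le n_0,
\end{equation*}
which is the $L^p$-version of the fluctuation scaling implicit in \refC{C1} restricted to an interval of length $M$; it follows from a Hoeffding-type decomposition of $\tU_n-\tU_{n_0}$ into degenerate components of orders $1,\dots,\td$ built from the projections $\tf_i$, together with Doob's and Rosenthal's inequalities applied stratum by stratum. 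Granted this maximal bound, I split according to whether $|\NN\pm(x)-n_0|\le M(x):=x^{1/(2d)}\log x$: on the typical event the maximal bound controls the increment and yields the desired order; on the complementary event, which by (a) has probability $O(x^{-q})$ for every $q$, the \CSineq{} combined with $\normx{2p}{\tU_{\NN\pm(x)}}=O(x^{\td/d})$ (from (c) applied pathwise in $n=\NN\pm(x)$ and the moment bound on $\NN\pm(x)$) contributes only a lower order term.

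The main obstacle is the $L^p$ maximal inequality above: since $\tf$ is not symmetric, the usual symmetrization shortcuts do not apply verbatim, and the martingale decomposition must be arranged to respect the projection structure underlying the $\tf_i$. Once this is in place, the LLN statement \eqref{tvtau0} with moment convergence follows from the same estimates with the centering $\binom{n_0}{\td}\tmu$ replaced by its leading deterministic order, and the asymptotics \eqref{tvpe}--\eqref{tvpv} drop out from moment convergence in \eqref{tvtau0}--\eqref{tvtau}.
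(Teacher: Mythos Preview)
Your overall strategy---reduce moment convergence to uniform $L^p$ boundedness of the normalized error---is the same as the paper's, but the execution has two genuine errors.

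First, your maximal inequality
$\bignorm{\max_{|n-n_0|\le M}|\tU_n-\tU_{n_0}|}_p\le C_p\,n_0^{\td-1}M^{1/2}$
is false whenever $\tmu\ne0$: already the deterministic drift
$\tmu\bigl(\binom{n_0+M}{\td}-\binom{n_0}{\td}\bigr)$ is of order $n_0^{\td-1}M$, not $n_0^{\td-1}M^{1/2}$. Your Hoeffding decomposition into ``degenerate components of orders $1,\dots,\td$'' omits the order-$0$ constant, so what it could yield is an inequality for the \emph{centred} process $U_n(\tf-\tmu)$, not for $\tU_n$. With that correction your three-term decomposition no longer covers everything: the missing piece $\tmu\bigl(\binom{\NN\pm(x)}{\td}-\binom{n_0}{\td}\bigr)$ must be handled on its own---and that is precisely the paper's term $\frac{\tmu}{\td!}\bigl(\NN\pm(x)^{\td}-n(x)^{\td}\bigr)$, bounded via \refL{Lpq}\ref{Lpqb} from the estimate \eqref{bal}.

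Second, the tail claim fails with your choice $M(x)=x^{1/(2d)}\log x$. From (a) and Markov's inequality you get only
$\P\bigl(|\NN\pm(x)-n_0|>M(x)\bigr)=O\bigl((\log x)^{-p}\bigr)$,
not $O(x^{-q})$. Feeding this into \CS{} together with $\normx{2p}{\tU_{\NN\pm(x)}}=O(x^{\td/d})$ gives a contribution of order $x^{\td/d}(\log x)^{-p/2}$, which is never $O\bigl(x^{(\td-1/2)/d}\bigr)$ for any fixed $p$. Taking instead $M(x)=x^{1/(2d)+\delta}$ with $0<\delta\le 1/(2d)$ would repair the tail, but then you still owe a proof of the (centred) localized maximal inequality, which is a nontrivial new lemma not available in the paper.

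The paper sidesteps both issues. It decomposes as
\[
\tU_{\NN\pm(x)}-\tfrac{\tmu}{\td!}\,n(x)^{\td}
= U_{\NN\pm(x)}(\tf-\tmu)
+\tfrac{\tmu}{\td!}\bigl(\NN\pm(x)^{\td}-n(x)^{\td}\bigr)
+O\bigl(\NN\pm(x)^{\td-1}\bigr),
\]
bounds the second term via \refL{Lpq}\ref{Lpqb} and \eqref{bal}, and for the first term uses only the \emph{global} maximal bound $\normx{p}{U_n^*(\tf-\tmu)}\le C_p\,n^{\td-1/2}$ from \refL{LUp}, combined with a dyadic split on $\{An(x)<\NN\pm(x)\le 2An(x)\}$. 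The point is that for $A\ge 2$ these events \emph{do} have probability $O(x^{-q})$ for every $q$ (this is \eqref{bbn}, extracted from \eqref{baj} with $p=0$), so summing over $A=2^k$ converges. No localized maximal inequality is needed.
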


\begin{theorem}\label{TVp1}
Let $d=1$. Suppose that $f(X)\in L^p$ and
$\tf(X_1,\dots,X_{\td})\in L^p$ for every $p<\infty$,
and  that $\mu>0$. 
\begin{romenumerate}
\item \label{TVp1a}
Then, \eqref{cvtau}
holds with convergence of all moments and absolute moments.

\item \label{TVp1b}
If furthermore $f(X)$ is integer-valued and $f(X)\ge0$, then 
\ref{TVp1a} holds also conditioned on $R(x)=k$ or on $U_{\NN-(x)}=x$ as in
\refT{TO}\ref{TOc}.
\end{romenumerate}
\end{theorem}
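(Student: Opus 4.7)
The plan is to deduce part \ref{TVp1a} from the $d=1$ case of \refT{TVp}, and to obtain part \ref{TVp1b} by upgrading the conditional convergence in distribution of \refT{TO}\ref{TOc} to moment convergence via a uniform integrability argument.

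For \ref{TVp1a}, when $d=1$ the statement \eqref{tvtau} of \refT{TVtau} reduces exactly to \eqref{cvtau}, with the explicit formula \eqref{cvtau2} for $\gam^2$ obtained from \eqref{tvtau2} by setting $d=1$ and expanding $\Var\bigpar{\tZ_1-\frac{\tmu}{(\td-1)!\,\mu}Z_1}$ using \eqref{Z}, \eqref{xul} and \eqref{psi}. Hence part \ref{TVp1a} is an immediate consequence of \refT{TVp}.

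For \ref{TVp1b}, write $Y_x$ for the normalized random variable on the left-hand side of \eqref{cvtau}, and let $A_x$ denote the conditioning event, either $\{R(x)=k\}$ or $\{U_{\NN-(x)}=x\}$, for integers $x=n\to\infty$ restricted so that $\P(A_x)>0$. By \refT{TO}\ref{TOc}, under $\P(\cdot\mid A_x)$ we have $Y_x\dto N(0,\gam^2)$. To promote this to convergence of all moments and absolute moments, it suffices to show that for every $p<\infty$ the family $\{|Y_x|^p\}$ is uniformly integrable under $\P(\cdot\mid A_x)$. For any $q>p$,
\begin{equation*}
\E\bigsqpar{|Y_x|^q\mid A_x}
=\frac{\E\bigsqpar{|Y_x|^q\ett{A_x}}}{\P(A_x)}
\le \frac{\E|Y_x|^q}{\P(A_x)},
\end{equation*}
and the numerator is bounded uniformly in $x$ by part \ref{TVp1a}.

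The main obstacle is therefore a uniform lower bound on $\P(A_x)$. This is furnished by the classical discrete renewal theorem (\eg{} Erd\H os--Feller--Pollard, see \cite[Chapter 2]{Gut-SRW}): in the span-$1$ case, $\P(R(x)=k)\to (1/\mu)\,\P(f(X)\ge k)>0$ and $\P(U_{\NN-(x)}=x)\to 1/\mu>0$ as $x\to\infty$ through integers; in the general span-$b$ case, the same positive limits hold along the appropriate arithmetic subsequences. In particular, $\P(A_x)\ge c>0$ eventually along any sequence on which the conditioning is meaningful, which combined with the display above yields the required conditional uniform integrability, hence conditional convergence of all moments and absolute moments.
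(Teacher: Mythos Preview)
Your argument is correct and follows essentially the same route as the paper. Part~\ref{TVp1a} is deduced as the $d=1$ case of \refT{TVp}, and for part~\ref{TVp1b} the paper likewise transfers the unconditional uniform integrability from \ref{TVp1a} to the conditional measures using that $\liminf_{x\to\infty}\P(A_x)>0$; the paper simply cites the proof of \refT{TO} for this, while you make the renewal-theoretic justification explicit. (A minor imprecision: when $\P\bigl(f(X)=0\bigr)>0$ the limit of $\P\bigl(U_{\NN-(x)}=x\bigr)$ is $(1/\mu)\,\P\bigl(f(X)\ge1\bigr)$ rather than $1/\mu$, but only positivity is needed and that is unaffected.)
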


\section{Proofs}\label{Spf}

\subsection{Limit theorems}\label{SSpflimit}

The method used by \citet{Hoeffding} and many later papers is a
decomposition, which in the asymmetric case is as follows.
Assume that $f(X_1,\dots,X_d)\in L^2$ and
define, recalling \eqref{mu},
\begin{align}
f_i(x)&:=\E \bigpar{f(X_1,\dots,X_d)\mid X_i=x}-\mu 
\notag
\\&\phantom:
= \E f\bigpar{X_1,\dots,X_{i-1},x,X_{i+1},\dots,X_d}-\mu,
\label{fi}
\\
\fx(x_1,\dots,x_d)
&:=
f(x_1,\dots,x_d) - \mu - \sumjd f_j(x_j).
\label{h}
\end{align}
(In general, these are defined only \aex, but that is no problem.)
Then, by the definition \eqref{U},
\begin{equation}\label{unf}
  \begin{split}
U_n(f)&=\binom nd \mu +\sumjd 
\sum_{1\le i_1<\dots<i_d\le n} f_j\bigpar{X_{i_j}}
+ U_n(\fx)
\\
&=\binom nd \mu 
+\sumjd \sumin \binom{i-1}{j-1}\binom{n-i}{d-j}f_j\bigpar{X_{i}}
+ U_n(\fx).
  \end{split}
\end{equation}
We consider the three terms in \eqref{unf} separately.
The first is a constant, and we shall see that the third term is negligible,
so the main term is the second term.

\begin{remark}
  The decomposition \eqref{unf} may be continued to higher terms by
  expanding $\fx$ further, see \eg{} \cite{Hoeffding} for the symmetric case
  and \cite[Chapter 11.2]{SJIII} in general; this is important when treating
  degenerate cases, 
  see \refR{Rdeg}, but for our purposes we have no need of this.
\end{remark}

For the second term, we define for convenience, for
$1\le j\le d$ and $n\ge1$,
\begin{align}
  a_{n,j}(i)&:=\binom{i-1}{j-1}\binom{n-i}{d-j}, 
\qquad 1\le i\le n,
\label{anj}
\\
\gDa\nj(i)&:=a\nj(i+1)-a\nj(i),
\qquad 1\le i< n. \label{bnj}
\end{align}

Recall $\psi(s,t)$ defined in \eqref{psi}, and let $\psi'(s,t)$ denote
$\frac{\partial}{\partial s}\psi(s,t)$.

\begin{lemma}\label{LA}
Uniformly for all $n$, $j$, $i$ such that the variables are defined,
\begin{align}
a\nj(i)=\psi_j(i,n)+O\bigpar{n^{d-2}},\label{la}
\\  
\gDa\nj(i)=\psi_j'(i,n)+O\bigpar{n^{d-3}}.\label{la'}
\end{align}
In particular, $a\nj(i)=O\bigpar{n^{d-1}}$
and $\gDa\nj(i)=O\bigpar{n^{d-2}}$.
Furthermore
(for $d\le2$),
any error term $O(n^{-1})$ or $O(n^{-2})$ here
vanishes identically. 
\end{lemma}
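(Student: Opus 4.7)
The plan is to establish both estimates by writing each binomial coefficient in $a\nj(i)$ as its leading monomial plus a polynomial remainder of strictly lower degree, expanding the product, and tracking the remainders through one step of the forward difference. Concretely, I write
\[
\binom{i-1}{j-1}=\frac{i^{j-1}}{(j-1)!}+q_j(i),
\qquad
\binom{n-i}{d-j}=\frac{(n-i)^{d-j}}{(d-j)!}+r_{d-j}(n-i),
\]
where $q_j$ has degree at most $j-2$ (and vanishes identically when $j=1$), $r_{d-j}$ has degree at most $d-j-1$ (and vanishes when $j=d$), and both have coefficients depending only on $d$. Multiplying out gives
\[
a\nj(i)-\psi_j(i,n)
=\frac{i^{j-1}r_{d-j}(n-i)}{(j-1)!}+\frac{q_j(i)(n-i)^{d-j}}{(d-j)!}+q_j(i)r_{d-j}(n-i).
\]
For $1\le i\le n$ each of the first two terms is $O(n^{d-2})$ and the third is $O(n^{d-3})$, by bounding each factor by its maximum on $[0,n]$. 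This yields \eqref{la}, together with $a\nj(i)=O(n^{d-1})$ since $\psi_j(i,n)=O(n^{d-1})$.

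For \eqref{la'} I would first apply the exact polynomial Taylor expansion
\[
\psi_j(i+1,n)-\psi_j(i,n)=\psi_j'(i,n)+\sum_{k=2}^{d-1}\frac{\psi_j^{(k)}(i,n)}{k!}=\psi_j'(i,n)+O\bigpar{n^{d-3}},
\]
valid because $\psi_j(\cdot,n)$ is a polynomial of degree $d-1$ in its first argument, so the tail has degree at most $d-3$ in $i$ and is uniformly bounded by $Cn^{d-3}$ on $[0,n]$ (and identically zero for $d\le2$). Then I would apply $\Delta$ to each of the three error terms above using the product rule $\Delta(fg)(i)=\Delta f(i)\,g(i+1)+f(i)\Delta g(i)$, together with $\Delta[i^k]=O(i^{k-1})$ and $|r(n-i-1)-r(n-i)|\le Cn^{\deg r-1}$ for a nonconstant polynomial $r$. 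Straightforward pairing of degrees shows that each differenced error term is $O(n^{d-3})$, yielding \eqref{la'} and in particular $\gDa\nj(i)=O(n^{d-2})$.

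For the final assertion, the case $d=1$ is immediate since then $a_{n,1}(i)\equiv1\equiv\psi_1(i,n)$ and $\gDa_{n,1}\equiv0\equiv\psi_1'(i,n)$. For $d=2$, in each of $j=1$ and $j=2$ one of $q_j,r_{d-j}$ is identically zero, which kills the $q_jr_{d-j}$ cross-term (the only $O(n^{d-3})=O(n^{-1})$ contribution to \eqref{la}); the Taylor tail in $\Delta\psi_j$ vanishes since $\psi_j''\equiv0$, and the remaining error terms turn out to be constant in $i$, so their forward differences vanish. The only real obstacle in the whole argument is the bookkeeping in the product expansion and the product-rule differencing, particularly handling the boundary cases $j\in\set{1,d}$ where one remainder polynomial is empty.
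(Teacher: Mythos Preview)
Your argument is correct. For \eqref{la} it is identical to the paper's: both write each binomial as its leading monomial plus a lower-degree remainder and multiply out. For \eqref{la'}, however, the paper takes a different and somewhat shorter route: rather than differencing the decomposition $a\nj=\psi_j+E$ term by term, it first applies Pascal's rule to obtain the exact identity
\[
\gDa\nj(i)=\binom{i-1}{j-2}\binom{n-i}{d-j}-\binom{i}{j-1}\binom{n-i-1}{d-j-1},
\]
and then expands each of these two binomial products by the same leading-term-plus-remainder trick used for \eqref{la}, recognising the leading part as $\psi_j'(i,n)$. This avoids the discrete product rule and the Taylor-tail bookkeeping entirely. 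Your approach has the mild advantage of being completely mechanical and of making the $d\le2$ vanishing explicit (the paper's proof does not spell that case out), while the paper's Pascal-identity route is cleaner and keeps the combinatorics in closed form throughout.
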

\begin{proof}
  By \eqref{anj}, for $1\le i\le n$,
  \begin{equation}
    \begin{split}
 a\nj
&=\frac{i^{j-1}+O(n^{j-2})}{(j-1)!}\cdot \frac{(n-i)^{d-j}+O(n^{d-j-1})}{(d-j)!}
\\&
=\frac{i^{j-1}(n-i)^{d-j}}{(j-1)!\,(d-j)!}+O\bigpar{n^{d-2}},      
    \end{split}
  \end{equation}
which is \eqref{la}.
Similarly, for $1\le i< n$,
with $\binom k{-1}=0$,
  \begin{equation}
    \begin{split}
\gDa\nj
&=\lrpar{\binom{i}{j-1}-\binom{i-1}{j-1}}\binom{n-i}{d-j}
\\&\hskip8em
+\binom{i}{j-1}\lrpar{\binom{n-i-1}{d-j}-\binom{n-i}{d-j}}
\\
&=\binom{i-1}{j-2}\binom{n-i}{d-j}
-\binom{i}{j-1}\binom{n-i-1}{d-j-1}
\\
&=\frac{(j-1)i^{j-2}(n-i)^{d-j}-(d-j)i^{j-1}(n-i)^{d-j-1}+O\bigpar{n^{d-3}}}
{(j-1)!\,(d-j)!}
\\
&=\psi_j'(i,n)
+O\bigpar{n^{d-3}}.
    \end{split}
  \end{equation}
\end{proof}

We now take case of the second term in \eqref{unf}.
\begin{lemma}\label{Lux}
  Let
  \begin{equation}\label{lux1}
    \ux\nj:= \sumin \binom{i-1}{j-1}\binom{n-i}{d-j}f_j\bigpar{X_{i}}
= \sumin a\nj(i) f_j\bigpar{X_{i}}.
  \end{equation}
Then, as \ntoo, with $W_j$ as in \eqref{WW},
\begin{equation}\label{lux2}
 n^{-(d-1/2)} \ux_{nt,j}\dto \int_0^t\psi_j(u,t)\dd W_j(u),
\qquad t\ge0,
\end{equation}
in $\Doo$, jointly for $j=1,\dots,d$.
\end{lemma}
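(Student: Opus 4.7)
The plan is to reduce \eqref{lux2} to the multivariate Donsker invariance principle through summation by parts. Write $T_{i,j}:=\sum_{k=1}^i f_j(X_k)$ and $S_n\xx{j}(t):=n^{-1/2}T_{\floor{nt},j}$. Since the vectors $\bigpar{f_1(X_i),\dots,f_d(X_i)}$ are \iid{} with mean $0$ (by \eqref{fi}) and covariance $\gS=(\gs_{ij})$, the multivariate functional central limit theorem gives $\bigpar{S_n\xx{1},\dots,S_n\xx{d}}\dto (W_1,\dots,W_d)$ in $\Doo^d$, with the joint Gaussian structure of \eqref{WW} and \eqref{xul}. By Skorohod's representation theorem I may assume this convergence is a.s., uniformly on every compact $\oT$.

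Fix $t>0$ and set $m:=\floor{nt}$. Abel summation applied to $\ux_{m,j}=\sum_{i=1}^m a_{m,j}(i)f_j(X_i)$ yields
\begin{equation*}
  \ux_{m,j}=a_{m,j}(m)\,T_{m,j}-\sum_{i=1}^{m-1}\gDa_{m,j}(i)\,T_{i,j}.
\end{equation*}
Since $\psi_j(s,t)$ and $\psi_j'(s,t)$ are homogeneous of degrees $d-1$ and $d-2$ respectively, \refL{LA} combined with this homogeneity yields, uniformly for $u\in\oT$,
\begin{equation*}
  \frac{a_{m,j}(m)}{n^{d-1}}\to\psi_j(t,t),\qquad \frac{\gDa_{m,j}(\floor{nu})}{n^{d-2}}\to\psi_j'(u,t).
\end{equation*}

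Dividing the Abel identity by $n^{d-1/2}$ therefore gives
\begin{equation*}
  n^{-(d-1/2)}\ux_{nt,j}=\frac{a_{m,j}(m)}{n^{d-1}}\,S_n\xx{j}(t)-\sum_{i=1}^{m-1}\frac{\gDa_{m,j}(i)}{n^{d-2}}\,S_n\xx{j}(i/n)\cdot\frac1n+o(1),
\end{equation*}
with error uniform in $t\in\oT$. The last sum is a Riemann approximation to $\int_0^t\psi_j'(u,t)\,W_j(u)\dd u$, which under the a.s.\ uniform convergence $S_n\xx{j}\to W_j$ passes to the limit; hence
\begin{equation*}
  n^{-(d-1/2)}\ux_{nt,j}\to\psi_j(t,t)\,W_j(t)-\int_0^t\psi_j'(u,t)\,W_j(u)\dd u
\end{equation*}
a.s., uniformly on $\oT$. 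Deterministic integration by parts (using $W_j(0)=0$) identifies the right-hand side with $\int_0^t\psi_j(u,t)\dd W_j(u)$, which is \eqref{lux2}; joint convergence for $j=1,\dots,d$ is automatic from the joint Skorohod coupling.

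The chief technical point is to make the above approximations uniform in $t\in\oT$: since $\gDa_{m,j}(i)$ and its approximant $\psi_j'(i,m)$ depend on the moving endpoint $m=\floor{nt}$, one needs a single polynomial bound valid as $t$ varies on $\oT$, combined with the a.s.\ equicontinuity of $W_j$ on $\oT$. Both are routine, but handling this carefully is what upgrades finite-dimensional convergence to functional convergence in $\Doo$.
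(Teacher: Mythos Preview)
Your proof is correct and follows essentially the same route as the paper: Abel (summation by parts) to rewrite $\ux_{m,j}$ in terms of the partial sums $T_{i,j}$, Donsker plus Skorohod coupling to replace $S_n\xx{j}$ by $W_j$, \refL{LA} plus homogeneity of $\psi_j,\psi_j'$ to control the coefficients, a Riemann-sum passage to the integral, and finally integration by parts to reach the It\^o integral form. The paper carries out the uniformity in $t\in\oT$ explicitly (its equations for $\ux_{m,j}$ and the Riemann approximation track the $o_{\mathrm{a.s.}}(n^{d-1})$ errors uniformly in $s\le T$), whereas you flag this as the ``chief technical point'' and leave it as routine; that is fine, though note the uniformity must be in \emph{both} $u$ and $t$, since $m=\floor{nt}$ moves with $t$.
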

\begin{proof}
   Let for any function $g:S\to\bbR$,
\begin{equation}\label{sn}
  S_n(g):=U_n(g):=\sumin g(X_i).
\end{equation}
Then, by  \eqref{sn}, \eqref{bnj}, and a summation by parts,
\begin{equation}\label{pi}
  \begin{split}
\ux\nj&=
\sumin a\nj(i)f_j(X_{i}) 
=\sumin a\nj(i)\bigpar{S_{i}(f_j)-S_{i-1}(f_j)}
\\&
=a\nj(n)S_n(f_j)-\sum_{i=1}^{n-1} \gDa\nj(i)S_i(f_j).
  \end{split}
\end{equation}

By \eqref{fi}, $\E f_j(X)=0$, and furthermore $f_j(X)\in L^2$.
Hence, by Donsker's theorem, 
\begin{equation}\label{donsker}
  n\qqw S_{nt}(f_j)\dto W_j(t),
\end{equation}
in $\Doo$, jointly for $j=1,\dots,d$, where $W_j$ are 
continuous centered Gaussian processes as in \eqref{WW}.

By the Skorohod coupling theorem \cite[Theorem 4.30]{Kallenberg}, 
we may assume that the convergence in \eqref{donsker} holds \as, and thus 
as \ntoo,
\begin{equation}\label{ddonsker}
n\qqw  S_{nt}(f_j)= W_j(t) + \oas\xpar{1},
\end{equation}
uniformly for $t\in\oT$ and all $j$, for every fixed $T<\infty$.
(Note that the error term here, $R_{n,j,t}$ say, is random; the uniformity
means that $\sup_{j\le d,\,t\le T}|R_{n,j,t}|\allowbreak\asto0$ for every $T$.)

Fix $T$, and let $m=ns$ with $s\le T$.
Then, by \eqref{pi}, \eqref{ddonsker} and \refL{LA}, uniformly for $s\in\oT$,
\begin{equation}\label{hux}
  \begin{split}
n\qqw\ux_{m,j}&
=a\mj(m) W_j(s) -\sum_{i=1}^{m-1} \gDa\mj(i) W_j(i/n)
+\oas\bigpar{n^{d-1}}
\\&
={\psi_j(m,m) W_j(s) -\sum_{i=1}^{m-1} \psi'_j(i,m) W_j(i/n)}
+\oas\bigpar{n^{d-1}}.
  \end{split}
\end{equation}
Furthermore, since $W_j$ is bounded and uniformly continuous on $\oT$, 
with $W_j(0)=0$, 
and
$\psi'_j(s,t)=O(t^{d-2})$,
$\psi''_j(s,t)=O(t^{d-3})$
for $0\le s\le t$,
\begin{equation}\label{flux}
  \begin{split}
      \sum_{i=1}^{m-1} \psi'_j(i,m) W_j(i/n)&
=  \int_{0}^{m} \psi'_j(x,m) W_j(x/n)\dd x+\oasx\bigpar{m^{d-1}}
\\&
=  n\int_{0}^{s} \psi'_j(nu,ns) W_j(u)\dd u+\oasx\bigpar{n^{d-1}}
\\&
=  n^{d-1}\int_{0}^{s} \psi'_j(u,s) W_j(u)\dd u+\oasx\bigpar{n^{d-1}}
.  \end{split}
\end{equation}
An integration by parts yields (with stochastic integrals)
\begin{equation}\label{crux}
  \begin{split}
\int_{0}^{s} \psi'_j(u,s) W_j(u)\dd u
=   \psi_j(s,s)W_j(s)-\int_{0}^{s} \psi_j(u,s)\dd W_j(u)
  \end{split}
\end{equation}
and combining \eqref{hux}, \eqref{flux} and \eqref{crux} yields,
using $\psi_j(m,m)=n^{d-1}\psi_j(s,s)$,
\begin{equation}
  \begin{split}
n\qqw\ux_{ns,j}&=n\qqw\ux_{m,j}
=
n^{d-1}\int_{0}^{s} \psi_j(u,s)\dd W_j(u)+\oas\bigpar{n^{d-1}}
,
  \end{split}
\end{equation}
uniformly for $0\le s\le T$.
Since $T$ is arbitrary, 
this yields \eqref{lux2}, jointly for all $j$.
\end{proof}

To show that the final term in \eqref{unf} is negligible, we give another lemma.
Cf.\  \cite{Sproule1974} 
for similar results in the symmetric case.

\begin{lemma}
  \label{LU*}
Suppose that $\fXXd\in L^2$.
\begin{romenumerate}
\item \label{LU*1}
Then
\begin{equation}\label{lu*1}
  \E |U_n^*(f-\mu)|^2 \le C n^{2d-1}\norm{f}^2.
\end{equation}
\item \label{LU*2}
If furthermore $f_i=0$ for $i=1,\dots,d$, then
\begin{equation}\label{lu*2}
  \E |U_n^*(f-\mu)|^2 \le C n^{2d-2}\norm{f}^2.
\end{equation}
\end{romenumerate}
\end{lemma}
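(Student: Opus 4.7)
The plan is to deduce \ref{LU*1} from \ref{LU*2} via the Hoeffding decomposition \eqref{unf}, and to establish \ref{LU*2} by a direct variance bound combined with a martingale maximal inequality.

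For \ref{LU*1}, decompose $U_n(f-\mu) = \sum_{j=1}^d \ux\nj + U_n(\fx)$ using \eqref{unf}. The linear terms are treated via the summation-by-parts identity \eqref{pi} from the proof of \refL{Lux}, which writes $\ux\mj = a\mj(m)\, S_m(f_j) - \sum_{i<m}\gDa\mj(i)\, S_i(f_j)$. Since the partial sums $S_i(f_j)$ form an $\cF_i$-martingale with $\E S_n(f_j)^2 \le n\norm{f}^2$, Doob's $L^2$-inequality yields $\E\max_{i\le n}|S_i(f_j)|^2 \le 4n\norm{f}^2$. Combined with the uniform bounds $|a\mj(i)|\le Cn^{d-1}$ and $\sum_{i<m}|\gDa\mj(i)|\le Cn^{d-1}$ from \refL{LA}, this gives $\E\max_{m\le n}|\ux\mj|^2 \le Cn^{2d-1}\norm{f}^2$. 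Since $(\fx)_i = 0$ by construction \eqref{h}, applying \ref{LU*2} to $\fx$ controls the remaining piece: $\E\max_m|U_m(\fx)|^2 \le Cn^{2d-2}\norm{\fx}^2 \le Cn^{2d-1}\norm{f}^2$.

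For \ref{LU*2}, set $g := f - \mu$, so that $\E g = 0$ and $g_i = 0$ for all $i$. A direct expansion of $\E U_n(g)^2 = \sum_{I,J}\E[g(X_I)\,g(X_J)]$ over pairs of $d$-subsets $I, J$ of $[n]$, stratified by the intersection size $k = |I\cap J|$, yields the non-maximal bound $\E U_n(g)^2 \le Cn^{2d-2}\norm{g}^2$. The $k=0$ strata vanish by independence and $\E g = 0$; the $k=1$ strata reduce, after integrating out all non-shared variables, to expressions of the form $\E[g_p(X)g_q(X)]$ (with $p,q$ the positions of the shared index within $I,J$), and these vanish because every $g_i = 0$; the remaining strata contain $O(n^{2d-k})\le O(n^{2d-2})$ pairs, each contributing $O(\norm{g}^2)$ by Cauchy--Schwarz.

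To upgrade to the maximal bound, decompose the increment $V_m - V_{m-1}$ (with $V_m := U_m(g)$) as $\alpha_m + \beta_m$, where $\alpha_m := \E[V_m - V_{m-1}\mid\cF_{m-1}]$ is predictable and $\beta_m$ is a martingale difference. Doob's $L^2$-inequality applied to the martingale $\sum_{k\le m}\beta_k$ produces a maximal bound of $Cn^{2d-2}\norm{g}^2$, since its terminal variance is controlled by $\E V_n^2$ plus the variance of the predictable sum. Each $\alpha_m$ is itself a $(d-1)$-variable $U$-statistic on $[m-1]$ with kernel $h(x_1,\dots,x_{d-1}) := \int g(x_1,\dots,x_{d-1},y)\dd P(y)$, and the hypotheses $\E g = 0$ and $g_i = 0$ ($i < d$) propagate to the analogous $\E h = 0$ and $h_i = 0$; the partial sums $\sum_{m\le M}\alpha_m$ can therefore be rewritten as a weighted $(d-1)$-variable $U$-statistic and controlled by induction on $d$, with the trivial base case $d = 1$ (where $g_1 = 0$ forces $g \equiv 0$). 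The main obstacle is the inductive bookkeeping: verifying that $h$ satisfies the $(d-1)$-variable hypotheses of \ref{LU*2}, and checking that the $O(n)$-scale coefficient accumulated by summing the $\alpha_m$ precisely offsets the exponent reduction from $2d-2$ to $2(d-1)-2$, so that the target $n^{2d-2}$ is preserved.
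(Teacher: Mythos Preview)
Your argument is correct but organized quite differently from the paper's.

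The paper does not use induction on $d$, nor the Hoeffding decomposition \eqref{unf}. Instead it introduces a single ``left-to-right'' martingale decomposition: with $\FF_k(X_1,\dots,X_k)=\E\bigl(f\mid X_1,\dots,X_k\bigr)$ and $F_k=\FF_k-\FF_{k-1}$, one has $f-\mu=\sum_{k=1}^d F_k$, and a summation by parts gives the pointwise bound $U_n^*(f-\mu)\le\sum_k n^{d-k}U_n^*(F_k)$. Since each $U_n(F_k)$ is an $\cF_n$-martingale, Doob's inequality plus the increment estimate $\norm{\gD U_n(F_k)}\le n^{k-1}\norm{f}$ yields \ref{LU*1} directly. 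Part \ref{LU*2} is then a refinement of the same computation: the hypothesis $f_k=0$ gives $\E\bigl(F_k\mid X_k\bigr)=0$, which kills the diagonal-free pairs in $\E|\gD U_n(F_k)|^2$ and improves the increment bound by one power of $n$.

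Your route---prove \ref{LU*2} first by Doob-compensating the increment and inducting on $d$, then derive \ref{LU*1} from \ref{LU*2} via the linear-plus-remainder split \eqref{unf}---is a legitimate alternative. The pieces check out: the compensator $\alpha_m=U_{m-1}(h)$ with $h(x_1,\dots,x_{d-1})=\int g(x_1,\dots,x_{d-1},y)\dd P(y)$ indeed inherits $\E h=0$ and $h_i=0$ for $i<d$, and the crude estimate $\bigl|\sum_{m\le M}\alpha_m\bigr|\le n\,U^*_{n}(h)$ combined with the inductive bound $\E|U^*_n(h)|^2\le Cn^{2(d-1)-2}\norm{h}^2$ recovers exactly $n^{2d-2}$. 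The paper's decomposition can be viewed as unrolling your induction in one shot: your $h$ is the paper's $\FF_{d-1}$, and iterating your compensation step produces the telescoping differences $F_k$. The advantage of the paper's organization is that it avoids the inductive bookkeeping and extends verbatim to $L^p$ (the paper's \refL{LUp}) by replacing Doob with Burkholder; your approach would also extend, but the induction would have to be carried in $L^p$ throughout.
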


\begin{proof}
\pfitemref{LU*1}
  We introduce another decomposition of $f$ and $U_n$, which unlike the one
  in \eqref{fi}--\eqref{unf} focusses on the order of the arguments.
Let $\FF_0:=\mu$ and, for $1\le k\le d$,
\begin{align}
  \FF_k(x_1,\dots,x_k)&:=\E f(x_1,\dots,x_k,X_{k+1},\dots,X_d),\label{FF}
\\
F_k(x_1,\dots,x_k)&:=\FF_k(x_1,\dots,x_k)-\FF_{k-1}(x_1,\dots,x_{k-1}).\label{F}
\end{align}
In other words, 
$\FF_k(X_1,\dots,X_k):=\E\bigpar{f(X_1,\dots,X_d)\mid  X_1,\dots,X_k}$, and
thus 
$\FF_k(X_1,\dots,X_k)$, $k=0,\dots,d$, is a martingale, with
the martingale differences 
$F_k(X_1,\dots,X_k)$, $k=1,\dots,d$.
Hence,
\begin{equation}
  \label{EFk}
\E F_k(x_1,\dots,x_{k-1},X_k)=0.
\end{equation}

By \eqref{FF}--\eqref{F}, $f(x_1,\dots,x_d)-\mu=\sumkd F_k(x_1,\dots,x_k)$,
and thus
\begin{equation}
  \begin{split}
U_n(f-\mu)
&=\sumkd \sum_{i_1<\dots<i_k\le n} \binom{n-i_k}{d-k} F_k\xpar{X_{i_1},\dots,X_{i_k}} 
\\
&=\sumkd \sumin \binom{n-i}{d-k}\bigpar{U_i(F_k)-U_{i-1}(F_k)}
\\
&=U_n(F_d)+\sum_{k=1}^{d-1} \sum_{i=1}^{n-1} \binom{n-i-1}{d-k-1} U_i(F_k),
  \end{split}
\end{equation}
using a summation by parts and the identity 
$\binom{n-i}{d-k}-\binom{n-i-1}{d-k}=\binom{n-i-1}{d-k-1}$.
In particular,
\begin{equation}\label{kum}
  \begin{split}
|U_n(f-\mu)|
&\le |U_n(F_d)|+\sum_{k=1}^{d-1} \sum_{i=1}^{n-1} \binom{n-i-1}{d-k-1}
U^*_n(F_k)
\\
&= |U_n(F_d)|+\sum_{k=1}^{d-1} \binom{n-1}{d-k} U^*_n(F_k)
\le \sum_{k=1}^{d} n^{d-k} U^*_n(F_k).
  \end{split}
\end{equation}
Since the \rhs{} is weakly increasing in $n$, it follows that
\begin{equation}\label{kul}
U^*_n(f-\mu)
\le \sum_{k=1}^{d} n^{d-k} U^*_n(F_k).
\end{equation}

By the definition \eqref{U},
$\gD U_n(F_k):=U_n(F_k)-U_{n-1}(F_k)$ is a sum of $\binom{n-1}{k-1}$ terms
$F_k(X_{i_1},\dots,X_{i_{k-1}},X_n)$ that all have the same distribution,
and thus by Minkowski's inequality,
\begin{equation}\label{swab}
\E|\gD U_n(F_k)|^2 = \norm{\gD U_n(F_k)}^2
\le \binom{n-1}{k-1}^2\norm{F_k}^2 
\le n^{2k-2}\norm{f}^2.
\end{equation}
Furthermore,
it follows from \eqref{EFk} that $\E \bigpar{U_n(F_k)-U_{n-1}(F_k)\mid
  \cF_{n-1}}=0$, 
and thus $U_n(F_k)$, $n\ge0$, is a martingale.
Consequently, using \eqref{swab},
\begin{equation}\label{mb}
\E|U_n(F_k)|^2 = \sumin \abs{\E \gD U_i(F_k)}^2
\le n^{2k-1}\norm{f}^2
\end{equation}
and
Doob's inequality yields
\begin{equation}\label{mba}
\norm{U^*_n(F_k)}
\le C \norm{U_n(F_k)}
\le C n^{k-1/2}\norm{f}.
\end{equation}
Finally, \eqref{kul}, \eqref{mba} and Minkowski's inequality yield 
\begin{equation}\label{mbc}
  \norm{U^*_n(f-\mu)}
\le \sum_{k=1}^{d} n^{d-k} \norm{U^*_n(F_k)}
\le C n^{d-1/2}\norm{f},
\end{equation}
which yields \eqref{lu*1} by squaring.

\pfitemref{LU*2}
By \eqref{FF}--\eqref{F} and \eqref{fi}, 
\begin{equation}
  \E \bigpar{F_k(X_1,\dots,X_k)\mid X_k}
=   \E \bigpar{f(X_1,\dots,X_d)\mid X_k}-\E f =f_k(X_k).
\end{equation}
Hence, assuming $f_k=0$, 
\begin{equation}
  \label{lie}
  \E \bigpar{F_k(X_1,\dots,X_k)\mid X_k}=0.  
\end{equation}
It was seen in the proof of \ref{LU*1} that 
$\gD U_n(F_k)$ is a sum of $\binom{n-1}{k-1}$ terms
$F_k(X_{i_1},\dots,X_{i_{k-1}},X_n)$. It now follows from \eqref{lie} that
if $\set{i_1,\dots,i_{k-1}}$ and $\set{j_1,\dots,j_{k-1}}$ are two 
disjoint sets of indices, then, by first conditioning on $X_n$,
\begin{equation}
  \E\bigpar{F_k(X_{i_1},\dots,X_{i_{k-1}},X_n)F_k(X_{j_1},\dots,X_{j_{k-1}},X_n)}
=0.
\end{equation}
Hence, only the $O\bigpar{n^{2k-3}}$ pairs of index sets
$\set{i_1,\dots,i_{k-1}}$ and $\set{j_1,\dots,j_{k-1}}$ with at least one
common element contribute to $\E\bigpar{\gD U_n(F_k)}^2$, and we obtain, for
$1\le k\le d$, that \eqref{swab} is improved to
\begin{equation}\label{gul}
  \E |\gD U_n(F_k)|^2 \le C n^{2k-3}\norm{f}^2.
\end{equation}
(For $k=1$, $F_1=f_1=0$, and \eqref{gul} still holds.)
The result now follows as in \ref{LU*1},
see \eqref{mb}--\eqref{mbc},
by \eqref{gul}, Doob's  inequality, 
 \eqref{kul} and Minkowski's inequality.
\end{proof}

\begin{proof}[Proof of \refT{T1}]
  We use the decomposition \eqref{unf}, with $n$ replaced by $\floor{nt}$.
For the constant term, note that
$
  \binom {\floor{nt}}d \mu = n^dt^d\mu/d!+O\bigpar{n^{d-1}} 
$
when $t=O(1)$.

The second term in \eqref{unf} 
is $\sumjd \ux_{nt,j}$, using the notation in \eqref{lux1}, and
we use \refL{Lux};
\eqref{lux2} shows that this term divided by $n^{d-1/2}$ converges in $\Doo$
to $Z_t$ defined in \eqref{Z}.

For the third term, we apply \refL{LU*} to $\fx$. It follows from the
definition \eqref{h} that
$\mu_*:=\E \fx(X_1,\dots,X_d)=0$ and that, applying \eqref{fi} to $\fx$, 
$(\fx)_i=0$ for every $i\le d$.
Hence, \refL{LU*}\ref{LU*2} applies to $\fx$ and yields
\begin{equation}\label{qul}
  \E \abs{U_n^*(\fx)}^2\le C n^{2d-2}\norm{\fx}^2\le C n^{2d-2}\norm{f}^2.
\end{equation}
Let $T>0$ be fixed. Applying \eqref{qul} to $nT$, we see in particular that
$n^{-(d-1/2)}U_{nt}(\fx)\pto0$
uniformly on $\oT$.

Consequently, \eqref{t1} follows from \eqref{unf}.

Joint convergence for several functions $f\xx k$, with limits given by
\eqref{xul}, follows by the same proof, using joint convergence for all
$f\xx k_i$ in \eqref{donsker}.
\end{proof}

\begin{proof}[Proof of \refT{TLLN}]
We do this in several steps.

\stepx\label{stepL2} 
First, suppose that $\fXXd\in L^2$.  
We may assume $\mu=0$, and then \refL{LU*}\ref{LU*1} implies, for any
$N\ge1$,
\begin{equation}
\E \sup_{N\le n\le 2N}\bigpar{|U_n|/n^d}^2 \le N^{-2d} \E (U^*_{2N})^2 \le C
N^{-1}\norm{f}^2. 
\end{equation}
Summing over all $N=2^m$, $m=0,1,\dots$, we find
\begin{equation}
\E\summo \sup_{2^m\le n\le 2^{m+1}}\bigpar{|U_n|/n^d}^2 <\infty.
\end{equation}
Hence, \as{} the terms in the sum tend to 0, which implies $U_n/n^d\to 0$ and
thus $U_n/\binom nd\to 0=\mu$. This  proves \eqref{tlln} for $f\in L^2$.

\stepx\label{stepL1>} 
Assume now $f\in L^1$ and $f\ge0$. Define the truncation $f_M:=f\land
M$. Then $f_M\in L^2$ and \refStep{stepL2} shows that for every $M<\infty$,
a.s.,
\begin{equation}
  \liminf_\ntoo \frac{U_n(f)}{\binom nd} 
\ge \liminf_\ntoo \frac{U_n(f_M)}{\binom nd }
= \E f_M(X_1,\dots,X_d).
\end{equation}
Letting $M\to\infty$ yields 
$\liminf_\ntoo U_n(f)/\binom nd \ge \mu$ a.s.

\stepx Continue to assume $f\in L^1$ and $f\ge0$. For every permutation
$\pi\in\fS_d$, let $f_\pi(X_1,\dots,X_d):=f(X_{\pi(1)},\dots,X_{\pi(d)})$,
and let $F:=\sum_{\pi\in\fS} f_\pi$ and $g:=F-f=\sum_{\pi\neq id} f_\pi$.
Note that $f,g\in L^1$ with $f,g\ge0$; thus \refStep{stepL1>} 
applies to both $f$ and
$g$. Furthermore, $F=f+g$ is symmetric, so we have 
$U_n(F)/\binom nd\asto \E F:=\E F(X_1,\dots,X_d)$
by the theorem by \citet{HoeffdingLLN} for the symmetric case. (This case
has a simple reverse martingale proof, see \refR{Rreverse}.)
Consequently, a.s.,
\begin{equation}
  \limsup_\ntoo \frac{U_n(f)}{\binom nd}
=
  \lim_\ntoo \frac{U_n(F)}{\binom nd}
-
  \liminf_\ntoo \frac{U_n(g)}{\binom nd}
\le \E F - \E g = \mu.
\end{equation}
Combined with \refStep{stepL1>}, this shows \eqref{tlln} for every $f\in
L^1$ with $f\ge0$.

\stepx
The general case follows by linearity.
\end{proof}

We used for convenience the known symmetric case in this proof. An
alternative would be to use suitable truncations, similarly to the original
proof of the symmetric case by \citet{HoeffdingLLN}.

\begin{lemma}
\label{Lvar}
Suppose that $\fXXd\in L^2$. Then, as \ntoo,
with $Z_1$ defined by \eqref{Z},
\begin{equation}\label{lvar}
  \begin{split}
    \frac{\Var U_n}{n^{2d-1}} 
\to
  \gss
&:=
\Var Z_1
\\&
\phantom:=
\sum_{i,j=1}^d
\frac{(i+j-2)!\,(2d-i-j)!}{(i-1)!\,(j-1)!\,(d-i)!\,(d-j)!\,(2d-1)!}\gs_{ij}
.
  \end{split}
\end{equation}
\end{lemma}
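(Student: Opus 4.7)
The plan is to compute $\Var(U_n)$ directly via the Hoeffding-type decomposition \eqref{unf}, and then to recognize the limit as $\Var(Z_1)$ using the covariance formula \eqref{t1cov} already derived in \refT{T1}. Without loss of generality assume $\mu=0$, so $U_n=\sum_{j=1}^d\ux_{n,j}+U_n(\fx)$. As noted during the proof of \refT{T1}, the residual $\fx$ satisfies $\E\fx=0$ and $(\fx)_i\equiv0$ for every $i$, whence \refL{LU*}\ref{LU*2} gives $\E|U_n(\fx)|^2=O(n^{2d-2})$, so $\Var(U_n(\fx))=o(n^{2d-1})$.

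A short check shows that $\Cov(\ux_{n,j},U_n(\fx))=0$ for each $j$: expanding $U_n(\fx)$ into ordered sums and using independence, only terms containing $X_i$ can contribute to the cross term with $f_j(X_i)$, and conditioning on $X_i$ reduces each such contribution to $\E\bigsqpar{f_j(X_i)(\fx)_\ell(X_i)}$, which vanishes since $(\fx)_\ell\equiv 0$. Therefore
\begin{equation*}
\Var(U_n)=\Var\Bigpar{\sum_{j=1}^d\ux_{n,j}}+o(n^{2d-1}).
\end{equation*}
Since $X_1,\dots,X_n$ are independent with $\E f_j(X_i)=0$, \eqref{lux1} yields
\begin{equation*}
\Var\Bigpar{\sum_j\ux_{n,j}}=\sum_{j,k=1}^d\gs_{jk}\sum_{i=1}^n a_{n,j}(i)a_{n,k}(i).
\end{equation*}

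Applying \refL{LA} and the homogeneity $\psi_j(i,n)=n^{d-1}\psi_j(i/n,1)$, we get $a_{n,j}(i)a_{n,k}(i)=\psi_j(i,n)\psi_k(i,n)+O(n^{2d-3})$ uniformly in $i$, and a standard Riemann sum argument gives
\begin{equation*}
\sum_{i=1}^n a_{n,j}(i)a_{n,k}(i)=n^{2d-1}\int_0^1\psi_j(u,1)\psi_k(u,1)\,\ddx u+o(n^{2d-1}).
\end{equation*}
Combining these estimates yields $\Var(U_n)/n^{2d-1}\to\sum_{j,k}\gs_{jk}\int_0^1\psi_j(u,1)\psi_k(u,1)\,\ddx u$, which equals $\Var(Z_1)$ by \eqref{t1cov} at $s=t=1$. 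The explicit form in \eqref{lvar} then drops out from the Beta integral $\int_0^1 u^{j+k-2}(1-u)^{2d-j-k}\,\ddx u=(j+k-2)!\,(2d-j-k)!/(2d-1)!$. No substantive obstacle arises; the only care required is bookkeeping on the error terms in the Riemann sum approximation and the orthogonality check, both of which are immediate consequences of the preparatory lemmas \refL{LA} and \refL{LU*}.
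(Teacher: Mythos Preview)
Your proof is correct and reaches the same computation as the paper, but the organization differs slightly. The paper does not go through the decomposition \eqref{unf}; instead it expands $\E U_n^2$ directly as a double sum over pairs of ordered $d$-tuples, observes that disjoint pairs vanish, counts that pairs with overlap $\ge 2$ contribute only $O(n^{2d-2})$, and identifies the single-overlap contribution as $\sum_{k,\ell}\sum_i a_{n,k}(i)a_{n,\ell}(i)\gs_{k\ell}+O(n^{2d-2})$. From that point on the two arguments coincide (\refL{LA}, Riemann sum, Beta integral). Your route---orthogonality of $\ux_{n,j}$ and $U_n(\fx)$ together with \refL{LU*}\ref{LU*2} for the residual---replaces the combinatorial overlap count by machinery already established earlier in the paper; this is arguably cleaner and makes the role of the degenerate part explicit, while the paper's direct count is more self-contained and avoids invoking \refL{LU*} again. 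Both are short and neither is materially simpler than the other.
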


\begin{proof}
  We may assume $\mu=0$. Then
  \begin{equation}
\Var U_n = \E U_n^2=
\sum_{i_1<\dots<i_d}\sum_{j_1<\dots<j_d}
  \E \bigpar{f(X_{i_1},\dots,X_{i_d})f(X_{j_1},\dots,X_{j_d})},
  \end{equation}
where all terms with $\set{i_1,\dots,i_d} \cap \set{j_1,\dots,j_d}
=\emptyset$ vanish. 
There are only $O\bigpar{n^{2d-2}}$ terms with
$|\set{i_1,\dots,i_d} \cap \set{j_1,\dots,j_d}|\ge2$,
so we concentrate on the case 
when, say, $i_k=j_\ell=i$, and all other indices are distinct.
Thus, using \eqref{fi} and the notation \eqref{anj} together with
\eqref{gsij}
and \refL{LA},
\begin{equation}
  \begin{split}
\E U_n^2
&=\sum_{k=1}^d\sum_{\ell=1}^d\sumin a_{n,k}(i)a_{n,\ell}(i) 
\E \bigpar{f_k(X_i)f_\ell(X_i)}+O\bigpar{n^{2d-2}}   
\\
&=\sum_{k=1}^d\sum_{\ell=1}^d\sumin \psi_{k}(i,n)\psi_{\ell}(i,n) \gs_{k\ell} 
+O\bigpar{n^{2d-2}}   
\\
&=\sum_{k=1}^d\sum_{\ell=1}^d \gs_{k\ell} \int_0^n \psi_{k}(x,n)\psi_{\ell}(x,n)\dd x 
+O\bigpar{n^{2d-2}}   
\\
&=n^{2d-1}\sum_{k=1}^d\sum_{\ell=1}^d \gs_{k\ell} \int_0^1
\psi_{k}(u,1)\psi_{\ell}(u,1)\dd u 
+O\bigpar{n^{2d-2}} .  
  \end{split}
\end{equation}
Consequently, by \eqref{t1cov}, 
\begin{equation}
  \frac{\Var U_n}{n^{2d-1}} \to
\sum_{k=1}^d\sum_{\ell=1}^d \gs_{k\ell} \int_0^1
\psi_{k}(u,1)\psi_{\ell}(u,1)\dd u 
=
\Var(Z_1).
\end{equation}
Furthermore, 
this equals the sum in \eqref{lvar}, as is seen  by taking $s=t=1$ in
\eqref{t1cov} 
and evaluating the resulting Beta integral.
\end{proof}

\begin{remark}
  Similarly, it follows more generally that
$\Cov\bigpar{U_{ns},U_{nt}}/n^{2d-1}\to\Cov(Z_s,Z_t)$ given by
\eqref{t1cov},
for any fixed $s,t\ge0$.
In other words, \eqref{t1} holds with convergence of second moments.
\end{remark}

\begin{proof}[Proof of \refC{C1}]
  The functional limit \eqref{t1} implies, since $Z_t$ is continuous,
  convergence (in distribution) for each fixed $t\ge0$.
Taking $t=1$ we obtain \eqref{c1} with $\gss=\Var Z_1$, which is evaluated
by \refL{Lvar}.

By \eqref{t1cov} and \eqref{gsij},
\begin{equation}
  \label{lasse}
  \begin{split}
\Var (Z_1)
&=\sum_{i,j=1}^d\Cov\bigpar{f_i(X),f_j(X)} \intoi \psi_i(s,1) \psi_j(s,1)\dd s   
\\&
=\intoi \Var\Bigpar{\sum_{i=1}^d\psi_i(s,1)f_i(X)}\dd s
  \end{split}
\end{equation}
Hence, $\gss=0\iff \sum_{i=1}^d\psi_i(s,1)f_i(X)=0$ \as{} for (almost) every
$s\in\oi$,
which is equivalent to $f_i(X)=0$ \as{} for every $i$ since the polynomials
$\psi_i(s,1)$ are linearly independent.
\end{proof}

\subsection{Renewal theory}\label{SSpfRenewal}

\begin{proof}[Proof of \refT{TNN}]
Consider first $\NN-$.
Note that \refT{TLLN} and $\mu>0$ imply $U_n\to\infty$  a.s., and then
  $\NN-(x)<\infty$ for every $x$.

Furthermore, it is trivial that $\NN-(x)\to\infty$ as \xtoo.
Thus we may substitute $n=\NN-(x)$ in \eqref{tlln} and obtain
\begin{equation}\label{ollon}
  \frac{U_{\NN-(x)}}{\NN-(x)^d}
=
  \frac{U_{\NN-(x)}}{\binom{\NN-(x)}{d}}\cdot
\frac{\binom{\NN-(x)}{d}}{\NN-(x)^d}
\asto\frac{\mu}{d!}
\qquad \text{as \xtoo}.
\end{equation}
Furthermore, we also have, again by \eqref{tlln},
\begin{equation}\label{kollon}
  \frac{U_{\NN-(x)+1}}{\NN-(x)^d}
=
  \frac{U_{\NN-(x)+1}}{\binom{\NN-(x)+1}{d}}\cdot
\frac{\binom{\NN-(x)+1}{d}}{\NN-(x)^d}
\asto\frac{\mu}{d!}.
\end{equation}
By the definition of $\NN-(x)$, 
$U_{\NN-(x)}\le x < U_{\NN-(x)+1}$, and thus \eqref{ollon}--\eqref{kollon}
imply 
\begin{equation}\label{bollon}
  \frac{x}{\NN-(x)^d}
\asto\frac{\mu}{d!}
\qquad \text{as \xtoo}.
\end{equation}
which is equivalent to \eqref{tnn} for $\NN-$.

The proof for $\NN+$ is the same, using $U_{\NN+(x)-1}\le x< U_{\NN+(x)}$.
\end{proof}

\begin{proof}[Proof of \refT{TR}]

Again,   we consider $\NN-$;  the argument for $\NN+$ is the same.
Let
\begin{align}\label{nx}
    n(x)&:=\xpar{d!/\mu}^{1/d}x^{1/d},
\\
T(x)&:=\NN-(x)/\floor{n(x)}.\label{Tx}
\end{align}
As \xtoo, $n(x)\to\infty$ and thus \eqref{t1} implies
  \begin{equation}\label{t1x}
\frac{ U_{\flnx t}-(\flnx t)^d\mu/d!}{n(x)^{d-1/2}} \dto Z_t
\qquad \text{in }\Doo.
  \end{equation}
Furthermore, \eqref{tnn} implies
\begin{equation}\label{tx1}
T(x)\to 1  
\end{equation}
a.s., and thus in probability. Hence,
\eqref{t1x} and \eqref{tx1} hold jointly in distribution 
\cite[Theorem 4.4]{Billingsley}. 
Now, $(F,t)\mapsto F(t)$ is a
measurable mapping $D\ooo\times \ooo\to\bbR$ that is continuous at every
$(F,t)$ with $F$ continuous.
Hence, by \cite[Theorem 5.1]{Billingsley},
it follows from
the joint convergence in \eqref{t1x} and \eqref{tx1}, together with
continuity of $Z_t$, 
that we may substitute $t=T(x)$ in
\eqref{t1x} and 
obtain, as $\xtoo$,
\begin{equation}\label{dum}
\frac{  U_{\NN-(x)}-\NN-(x)^d \mu/d!}{n(x)^{d-1/2}}\dto Z_1.
\end{equation}
Taking instead $t=T_1(x):=(\NN-(x)+1)/\flnx$, we similarly obtain
\begin{align}\label{dee}
\frac{  U_{\NN-(x)+1}-\NN-(x)^d \mu/d!}{n(x)^{d-1/2}}
&= 
\frac{  U_{\NN-(x)+1}-(\NN-(x)+1)^d \mu/d!+O\bigpar{\NN-(x)^{d-1}+1}}{n(x)^{d-1/2}}
\notag\\& 
\dto Z_1,
\end{align}
using $(\NN-(x)^{d-1}+1)/n(x)^{d-1/2}\pto0$ by \eqref{tnn} and \eqref{nx}.
Since $U_{\NN-(x)}\le x< U_{\NN-(x)+1}$, \eqref{dum} and \eqref{dee}
together imply,
as \xtoo,
\begin{equation}\label{dumdee}
\frac{  x-\NN-(x)^d \mu/d!}{n(x)^{d-1/2}}\dto Z_1.
\end{equation}
Hence, recalling \eqref{nx}, 
\begin{align}
\frac{x}{n(x)^{d-1/2}}
\lrpar{
  \parfrac{\NN-(x)}{n(x)}^d-1} 
&= \frac{\NN-(x)^d\mu/d!-x}{n(x)^{d-1/2}}
\dto -Z_1.
\label{dumdum}
\end{align}
Furthermore, letting $T_2(x):=\NN-(x)/n(x)$, we have $T_2(x)\asto1$ by
\eqref{tnn}, and thus,  interpreting the quotients as $d$ when $T_2(x)=1$,
\begin{equation}\label{hack}
\frac{  \bigpar{\xfrac{\NN-(x)}{n(x)}}^d-1 }
{  \bigpar{\xfrac{\NN-(x)}{n(x)}}-1 }
=
  \frac{T_2(x)^d-1}{T_2(x)-1} \asto d.
\end{equation}
Dividing \eqref{dumdum} by \eqref{hack} yields
\begin{equation}\label{don}
\frac{x}{n(x)^{d-1/2}}
\lrpar{
\frac{\NN-(x)}{n(x)}-1 }
\dto -\frac{1}{d}Z_1.
\end{equation}
Since
\begin{equation}\label{don32}
\frac{\NN-(x)-n(x)}{x^{1/2d}} 
= \parfrac{n(x)}{x^{1/d}}^{d+1/2}
\frac{x}{n(x)^{d-1/2}}
\lrpar{
\frac{\NN-(x)}{n(x)}-1 },
\end{equation}
\eqref{don} and \eqref{nx} imply
\begin{equation}\label{don2}
\frac{\NN-(x)-n(x)}{x^{1/2d}} 
\dto-\parfrac{d!}{\mu}^{1+1/2d}d\qw Z_1,
\end{equation}
which yields \eqref{tr}, since $Z_1\sim N(0,\gss)$ by \refL{Lvar}.
\end{proof}

\begin{proof}[Proof of \refT{TVtau}]
\pfitemref{TVtauas}
By \refT{TLLN} for $\tf$ and \eqref{tnn},
\begin{equation}
  \begin{split}
  \frac{\tU_{\NN\pm(x)}}{x^{\td/d}}
=
  \frac{\tU_{\NN\pm(x)}}{\NN\pm(x)^{\td}}
  \frac{\NN\pm(x)^{\td}}{x^{\td/d}}
\asto    
\frac{\tmu}{\td!}\Bigparfrac{d!}{\mu}^{\td/d}.
  \end{split}
\end{equation}

\pfitemref{TVtaud}
Define again $n(x)$ and $T(x)$ by \eqref{nx}--\eqref{Tx}.
We have joint convergence in \eqref{t1} for $f$ and $\tf$, and 
thus, as \xtoo, \eqref{t1x} holds jointly with
  \begin{equation}\label{t2x}
\frac{ \tU_{\flnx t}-(\flnx t)^{\td}\tmu/\td!}{n(x)^{\td-1/2}} \dto \tZ_t
\qquad \text{in }\Doo.
  \end{equation}
By \eqref{tx1} and
the argument in the proof of \refT{TR},
now using the mapping $(F,\tF,t)\mapsto(F(t),\tF(t))$ that maps
$\Doo\times\Doo\times\ooo\to\bbR^2$,
it follows that \eqref{dum} holds jointly with
\begin{equation}\label{tdum}
\frac{\tU_{\NN-(x)}-\NN-(x)^{\td} \tmu/\td!}{n(x)^{\td-1/2}}\dto \tZ_1.
\end{equation}
Furthermore, 
\eqref{dum} and \eqref{dumdee} together with $U_{\NN-(x)}\le x$ imply
\begin{equation}
  \frac{  x-U_{\NN-(x)}}{n(x)^{d-1/2}}\pto 0.
\end{equation}
Consequently, \eqref{dumdee} and \eqref{tdum} hold jointly.
The argument in the proof of \refT{TR} now holds with every convergence in
distribution holding jointly with \eqref{tdum}.
Hence, 
\eqref{tdum} holds jointly with
\eqref{don2}, which implies, see \eqref{hack}
and \eqref{nx},
\begin{equation}\label{uvb}
  \frac{\NN-(x)^{\td}-n(x)^{\td}}{n(x)^{\td-1/2}}
=
  \frac{\bigpar{\NN-(x)/n(x)}^{\td}-1}{\bigpar{\NN-(x)/n(x)}-1}
  \frac{\NN-(x)-n(x)}{n(x)^{1/2}}
\dto -\td\, \frac{d!}{\mu}\,d\qw Z_1.
\end{equation}
Consequently, 
\eqref{tdum} and \eqref{uvb} hold jointly,
and thus
\begin{equation}\label{tvtaux}
 \frac{\tU_{\NN-(x)}-\xfrac{n(x)^{\td}\tmu}{\td!}}{n(x)^{\td-1/2}} 
\dto 
\ZZ:=
\tZ_1
- \frac{(d-1)!\,\tmu}{(\td-1)!\,\mu}Z_1
.
\end{equation}
We obtain \eqref{tvtau}--\eqref{tvtau2} by substituting the definition
\eqref{nx} of $n(x)$. 

\pfitemref{TVtau=0}
By \eqref{tvtau2}, 
$\gam^2=0\iff \Var\bigpar{\mu(\td-1)!\,\tZ_1-\tmu(d-1)!\, Z_1}=0$, and
arguing as in \eqref{lasse}, and recalling \eqref{xul}, this is equivalent
to
\begin{equation}
  \Var\Bigpar{\mu(\td-1)!\sum_{i=1}^{\td}\psi_{i;\td}(s,1)\tf_i(X)
-\tmu(d-1)!\sum_{j=1}^{d}\psi_{j;d}(s,1)f_j(X)}=0
\end{equation}
for (almost) every $s\in\oi$, 
and by the definition \eqref{psi}, this is the same as
\begin{equation}\label{tuna}
  \mu\sum_{i=1}^{\td}\binom{\td-1}{i-1}s^{i-1}(1-s)^{\td-i}\tf_i(X)
=
\tmu\sum_{j=1}^{d}\binom{d-1}{j-1}s^{j-1}(1-s)^{d-j}f_j(X)
\end{equation}
a.s., for every $s$.

If $\td\ge d$, multiply the \rhs{} of \eqref{tuna} by $(s+1-s)^{\td-d}=
\sum_{k=0}^{\td-d}\binom{\td-d}{k} s^k(1-s)^{\td-d-k}$, which equals 1,
and identify the coefficients of $s^{i-1}(1-s)^{\td-i}$ on both sides; this
yields \eqref{ll}. Conversely, \eqref{ll} implies
\eqref{tuna} by the same argument.

The case $\td<d$ follows by the symmetry in \eqref{tuna}.

The special cases \eqref{ll1} and \eqref{ll2} are immediate consequences of
\eqref{ll}.
\end{proof}

\begin{proof}[Proof of \refT{CVtau}]
Take $d=1$ in \refT{TVtau}\ref{TVtaud}.
To obtain the formula \eqref{cvtau2} for $\gam^2$, we use \eqref{tvtau2} and
note first that $\Var(\tZ_1)$ is given by \eqref{c1var}, mutatis mutandis,
  which yields the first term on the \rhs{} of \eqref{cvtau2}.
Furthermore, \eqref{c1var} yields also, with $d=1$, 
$\Var (Z_1)=\gs_{11}=\Var(f(X))$, yielding the third term.
Finally, note that when $d=1$,
\eqref{psi} yields $\psi_{1;1}(s,t)=1$, and thus
\eqref{Z} yields
$Z_t=W(t)$; consequently, using \eqref{Z} and \eqref{xul} and a standard
Beta integral,
\begin{equation}
  \begin{split}
\Cov\bigpar{\tZ_1,Z_1}
&
=\sum_{j=1}^{\td}\Cov\Bigpar{\intoi\psi_{j;\td}(s,1)\dd \tW_j(s),\intoi\dd W(s)}    
\\&
=\sum_{j=1}^{\td} \intoi\psi_{j;\td}(s,1)\Cov\bigpar{\tf_j(X),f(X)} \dd s
\\&
=\sum_{j=1}^{\td}\frac{1}{\td!} \Cov\bigpar{\tf_j(X),f(X)} .
  \end{split}
\end{equation}
This yields the second term on the \rhs, and completes the proof.
\end{proof}

\begin{proof}[Proof of \refT{TO}]
Let (for $x\ge2$, say)
$x_-:=x-\ln x$ in the \nona{} case, and $x_-:=d\floor{(x-\ln x)/d}$ if $f(X)$
  has span $d>0$; also, in the latter case, consider only $x\in d\bbZ$.

First, run the process until the stopping time $\NN+(x_-)$.
Let 
\begin{equation}
  \xxx:=x-U_{\NN+(x_-)} = x-x_--R(x_-).
\end{equation}
As \xtoo, $R(x-)\dto\Roo$ by \refP{PR}, and $x-x_-\ge\ln x\to\infty$;
hence $\xxx\pto\infty$. In particular, with probability tending to 1 as
\xtoo, $\xxx\ge0$.

Restart the process after $\NN+(x_-)$ and continue until $\NN+(x)$.
Since $\NN+(x_-)$ is a stopping time, this continuation is independent of
what happened up to $\NN+(x_-)$, and thus it can be regarded as a renewal
process $\SSS_n$ 
starting at 0 and running to $\NN+(\xxx)$; in particular, the overshoot
$R^*(\xxx)$ of this renewal process equals the overshoot $R(x)$ of the
original one.
Here $\xxx$ is random,
but independent of the renewal process $\SSS_n$, and
since $\xxx\pto\infty$, \refP{PR}
implies that the overshoot $R(x)=R^*(\xxx)\dto\Roo$.
Furthermore, this holds conditioned on any events $\cE(x_-)$ that depend on the
original process up to $\NN+(x_-)$, provided $\liminf_{\xtoo}\P(\cE(x_-))>0$.

Denote the \lhs{} of \eqref{cvtau} by $\tV(x)$.
By \eqref{cvtau},
$\tV(x_-)\dto N(0,\gam^2)$ as \xtoo. Fix $a,b\in\bbR$ and let
$\cE(x_-):=\set{\tV(x_-)\le a}$. 
It then follows from the argument above that, as \xtoo,
\begin{equation}
  \begin{split}
      \P\bigpar{\tV(x_-)\le a,\,R(x)\le b}
&=  \P\bigpar{R(x)\le b\mid \tV(x_-)\le a}\P\bigpar{\tV(x_-)\le a}
\\&
\dto \P\bigpar{\Roo\le b}\P\bigpar{N(0,\gam^2)\le a}.
  \end{split}
\end{equation}
Consequently, $\tV(x_-)$ and $R(x)$ converge jointly, with independent
limits given by \eqref{cvtau} and \eqref{overa}--\eqref{overd}.

It remains only to replace by $\tV(x_-)$ by $\tV(x)$.
First, since $x_-=x-O(\ln x)$ it follows that $\tV(x_-)\dto N(0,\gam^2)$ is
equivalent to
\begin{equation}\label{cvtau-}
\frac{\tU_{\NN\pm(x_-)}-{\mu}^{-\td}{\tmu}{\td!}\qw x^{\td}}
{x^{\td-1/2}} 
\dto N\bigpar{0,\gam^2},
\end{equation}
Hence, \eqref{cvtau-} and $R(x)\dto\Roo$ hold jointly, with independent
limits.

Next, suppose first that $\tf(X_1,\dots,X_{\td})\ge0$.
Then, $\tU_{\NN\pm(x)}\ge\tU_{\NN\pm(x_-)}$ a.s., and thus \eqref{cvtau} and
\eqref{cvtau-} imply
\begin{equation}\label{cvtau00}
\frac{\tU_{\NN\pm(x)}-\tU_{\NN\pm(x_-)}}
{x^{\td-1/2}} 
\pto 0.
\end{equation}
By linearity, \eqref{cvtau00} holds for arbitrary $\tf\in L^2$.
Finally, \eqref{cvtau00} and  \eqref{cvtau-} imply \eqref{cvtau},
and hence
 \eqref{cvtau00} and the joint convergence of \eqref{cvtau-} and
$R(x)\dto\Roo$ imply the joint convergence of \eqref{cvtau} and
$R(x)\dto\Roo$,
proving \ref{TOa} and \ref{TOb}.

For \ref{TOc},  let $d$ be the span of $f(X)$, and assume first $d=1$.
Note that $\P(\Roo=k)=0\iff\P\bigpar{f(X)\ge k}=0$
by \eqref{overd}, and then \eqref{R} implies $R(x)\le f(X_{\NN+(x)})<k$ 
\as{} for every $x$; 
hence we only consider $k$ such that $\P(\Roo=k)>0$, and
the first part of \ref{TOc} follows from \ref{TOb}.

If the span $d>1$, then $R(x)=k$ implies $x+k=U_{\NN+(x)}\equiv 0\pmod d$
and thus $x\equiv -k\pmod d$, so we consider only $x\in -k+d\bbZ$.
Let $k_0:=d\ceil{k/d}$ and $\gD:=k_0-k\in[0,d-1]$.
Then $x-\gD\equiv x+k\equiv0\pmod d$, and thus, since $S_n(f)\in d\bbZ$,
$\NN+(x)=\NN+(x-\gD)$ and
$R(x-\gD)=U_{\NN+(x)}-x+\gD=R(x)+\gD$; hence
\begin{equation}
  R(x)=k 
\iff
R(x-\gD)=k+\gD=k_0.
\end{equation}
Hence, we may replace $x$ and $k$ by $x-\gD$ and $k_0$, 
and thus it suffices to consider $x,k\in d\bbZ$, but then we can reduce to
the case $d=1$ by replacing $f(X)$ by $f(X)/d$.

Finally, for an integer $n$,
$U_{\NN-(n)}=n \iff R(n-1)=1$. 
Hence, \eqref{cvtau} with $x=n-1$ holds as \ntoo, also
conditioned on $U_{\NN-(n)}=n$.
The argument above showing \eqref{cvtau00} shows also that
$\xpfrac{\tU_{\NN\pm(n)}-\tU_{\NN\pm(n-1)}}{n^{\td-1/2}} \pto 0$ as \ntoo,
and it follows that 
 \eqref{cvtau} with $x=n$ holds as \ntoo,
conditioned on $U_{\NN-(n)}=n$.  
\end{proof}

\subsection{Moment convergence}\label{SSpfmoments}

We turn to proving the theorems on moment convergence in \refSS{SSmoments},
and begin by extending \refL{LU*} to higher absolute moments.

\begin{lemma}
  \label{LUp}
Suppose that $\fXXd\in L^p$ with $p\ge2$. 
Then
\begin{equation}\label{lup}
  \E |U_n^*(f-\mu)|^p \le C_p n^{p(d-1/2)}\normp{f}^p.
\end{equation}
\end{lemma}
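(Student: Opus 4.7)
The plan is to follow the same architecture as the proof of \refL{LU*}\ref{LU*1}, replacing every $L^2$ estimate with its $L^p$ analogue. Concretely, reuse the martingale decomposition $f-\mu=\sum_{k=1}^d F_k$ from \eqref{FF}--\eqref{F} and the deterministic bound
\begin{equation*}
U_n^*(f-\mu)\le\sum_{k=1}^d n^{d-k}U_n^*(F_k)
\end{equation*}
that already appears in \eqref{kul}. It then suffices to establish, for each $1\le k\le d$, a bound of the form $\normp{U_n^*(F_k)}\le C_p n^{k-1/2}\normp{f}$; Minkowski's inequality applied to the sum above then yields \eqref{lup} after taking $p$-th powers.

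To obtain the bound on $\normp{U_n^*(F_k)}$, first note that $U_n(F_k)$ is still a martingale in the filtration $\cF_n$, since \eqref{EFk} continues to hold. Doob's $L^p$ maximal inequality (valid for $p>1$) reduces the task to bounding $\normp{U_n(F_k)}$. For the latter, apply Burkholder's inequality to the martingale $U_n(F_k)$ with differences $\gD U_i(F_k)$:
\begin{equation*}
\normp{U_n(F_k)}\le C_p\Bignorm{\Bigpar{\sumin |\gD U_i(F_k)|^2}^{1/2}}_p
=C_p\Bignorm{\sumin |\gD U_i(F_k)|^2}_{p/2}^{1/2}.
\end{equation*}
Since $p\ge2$, we may apply Minkowski's inequality in $L^{p/2}$ to the inner sum, obtaining
\begin{equation*}
\normp{U_n(F_k)}\le C_p\Bigpar{\sumin\normp{\gD U_i(F_k)}^2}^{1/2}.
\end{equation*}

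Now $\gD U_i(F_k)$ is a sum of $\binom{i-1}{k-1}$ terms of the form $F_k(X_{j_1},\dots,X_{j_{k-1}},X_i)$, each with the same $L^p$-norm bounded by $\normp{F_k}\le\normp{f}$ (the last bound by Jensen's inequality from the conditional-expectation definition \eqref{FF}--\eqref{F}). Minkowski's inequality in $L^p$ therefore gives $\normp{\gD U_i(F_k)}\le C i^{k-1}\normp{f}$, which is the direct $L^p$ analogue of \eqref{swab}. Summing,
\begin{equation*}
\sumin\normp{\gD U_i(F_k)}^2\le C\sumin i^{2k-2}\normp{f}^2\le C n^{2k-1}\normp{f}^2,
\end{equation*}
so $\normp{U_n(F_k)}\le C_p n^{k-1/2}\normp{f}$, and Doob's inequality then yields $\normp{U_n^*(F_k)}\le C_p n^{k-1/2}\normp{f}$, as required.

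I expect no real obstacle here: the only delicate point is that the naive $L^p$ version of \eqref{mb}, namely bounding $\normp{U_n(F_k)}^p$ by the sum of $\normp{\gD U_i(F_k)}^p$, would give the wrong exponent in $n$ for $p>2$, so one must genuinely invoke a square-function inequality (Burkholder or BDG) rather than mimicking \eqref{mb} term-by-term. Once that choice is made, the rest is a direct $L^p$ translation of the $L^2$ proof.
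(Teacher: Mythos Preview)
Your proposal is correct and follows essentially the same route as the paper's proof: both reuse the decomposition \eqref{FF}--\eqref{F} and the deterministic bound \eqref{kul}, then control $\normp{U_n^*(F_k)}$ via a square-function (Burkholder) inequality for the martingale $U_n(F_k)$ together with the Minkowski bound on $\normp{\gD U_i(F_k)}$. The only cosmetic differences are that the paper bounds the square function by H\"older (picking up a factor $n^{p/2-1}$) rather than by Minkowski in $L^{p/2}$, and that it invokes a version of Burkholder's inequality that already contains the maximal estimate rather than applying Doob separately; also note that strictly $\normp{F_k}\le 2\normp{f}$ (not $\le\normp{f}$), but this is absorbed in your constants.
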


\begin{proof}
  We use the same decomposition as in the proof of \refL{LU*}.
Note that, by Jensen's inequality, $\normp{\FF_k}\le\normp{f}$, 
and thus, 
  \begin{equation}
    \normp{F_k}\le2\normp{f},
\qquad 1\le k\le d.
  \end{equation}
Hence, Minkowski's inequality yields, as in \eqref{swab},
\begin{equation}\label{swabp}
\E|\gD U_n(F_k)|^p = \normp{\gD U_n(F_k)}^p
\le \binom{n-1}{k-1}^p\normp{F_k}^p 
\le C_p n^{pk-p}\normp{f}^p.
\end{equation}
Consequently, the Burkholder inequalities \cite[Theorem 10.9.5(i)]{Gut}
applied to the martingale $U_n(F_k)$
yield, using also \Holder's inequality,
\begin{align}
\label{mbap}
  \E|U^*_n(F_k)|^p
&\le C_p \E \Bigpar{\sumin \abs{\gD U_i(F_k)}^2}^{p/2}
\le C_p \E \Bigpar{n^{p/2-1}\sumin \abs{\gD U_i(F_k)}^p}
\notag\\&
=C_p n^{p/2-1}\sumin \E\abs{\gD U_i(F_k)}^p
\le C_p n^{pk-p/2}\normp{f}^p.
\end{align}
Equivalently,
\begin{align}
\label{mbapp}
  \normp{U^*_n(F_k)}
\le C_p n^{k-1/2}\normp{f}.
\end{align}
Finally, \eqref{kul}, \eqref{mbapp} and Minkowski's inequality yield 
\begin{equation}\label{mbcp}
  \normp{U^*_n(f-\mu)}
\le \sum_{k=1}^{d} n^{d-k} \normp{U^*_n(F_k)}
\le C_p n^{d-1/2}\normp{f},
\end{equation}
which is \eqref{lup}.
\end{proof}

We shall also use the following standard result, stated in detail and proved for
convenience and completeness.
\begin{lemma}\label{Lui}
  Let $\set{V_\ga:\ga\in \cA}$ be a set of random variables, and let $0<p<q$.
Suppose that for every $\eps>0$ there exist decompositions
$V_\ga=V_\ga'+V_\ga''$ and a $B_\eps<\infty$ 
such that, for every $\ga\in \cA$, 
$\normx{q}{V_\ga'}\le B_\eps$  and $\normp{V_\ga''}\le\eps$. Then the set
\set{|V_\ga|^p} is uniformly integrable.
\end{lemma}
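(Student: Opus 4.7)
The plan is to verify the standard uniform integrability criterion
\begin{equation*}
\lim_{M\to\infty}\sup_{\ga\in\cA}\E\bigsqpar{|V_\ga|^p\ett{|V_\ga|^p>M}}=0.
\end{equation*}
Given an arbitrary $\eta>0$, I will produce an $M$ such that every $\ga\in\cA$ satisfies $\E\bigsqpar{|V_\ga|^p\ett{|V_\ga|^p>M}}<\eta$.

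First I would fix $\eps>0$ (to be pinned down below), invoke the hypothesis to get a constant $B_\eps$ and a decomposition $V_\ga=V'_\ga+V''_\ga$, and apply the elementary bound $|a+b|^p\le C_p\bigpar{|a|^p+|b|^p}$ to split
\begin{equation*}
\E\bigsqpar{|V_\ga|^p\ett{|V_\ga|^p>M}}
\le C_p \E\bigsqpar{|V'_\ga|^p\ett{|V_\ga|^p>M}}+C_p\E|V''_\ga|^p.
\end{equation*}
The second term is at most $C_p\eps^p$ uniformly in $\ga$ and $M$, so choosing $\eps$ once and for all with $C_p\eps^p<\eta/2$ disposes of it. This choice fixes the constant $B_\eps$ for the rest of the argument.

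For the first term I would apply \Holder's inequality with the conjugate exponents $q/p$ and $q/(q-p)$ (valid since $q>p$), giving
\begin{equation*}
\E\bigsqpar{|V'_\ga|^p\ett{|V_\ga|^p>M}}
\le \normx{q}{V'_\ga}^{p}\,\P\bigpar{|V_\ga|^p>M}^{(q-p)/q}
\le B_\eps^p\,\P\bigpar{|V_\ga|^p>M}^{(q-p)/q}.
\end{equation*}
Since we work on a probability space, $\normp{V'_\ga}\le\normx{q}{V'_\ga}\le B_\eps$, and by Minkowski $\normp{V_\ga}\le B_\eps+\eps$, so Markov's inequality gives $\P(|V_\ga|^p>M)\le(B_\eps+\eps)^p/M$, independently of $\ga$. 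Thus the first term tends to $0$ as $M\to\infty$ for our fixed $\eps$, and $M$ can be chosen large enough to make it $<\eta/2$ uniformly in $\ga$.

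The argument has no serious obstacle; the one point that requires care is the order of the quantifiers, namely that $\eps$ (and hence $B_\eps$) is chosen first and only then is $M$ selected, which is precisely what the outline above arranges.
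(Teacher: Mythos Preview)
Your argument is correct and follows essentially the same route as the paper: split $|V_\ga|^p$ via $|a+b|^p\le C_p(|a|^p+|b|^p)$, bound the $V''_\ga$ piece by $C_p\eps^p$, and control the $V'_\ga$ piece by \Holder{} with exponents $q/p$ and $q/(q-p)$, yielding $C_pB_\eps^p\,\P(\cE)^{1-p/q}$. The paper phrases the conclusion via the small-set criterion ($\P(\cE)\le\gd$) rather than the tail criterion you use, but these are equivalent and the core estimate is identical. One small caveat: your appeal to Minkowski for $\normp{V_\ga}\le B_\eps+\eps$ needs $p\ge1$; for $0<p<1$ use instead $|a+b|^p\le|a|^p+|b|^p$ to get $\E|V_\ga|^p\le B_\eps^p+\eps^p$, which equally well feeds Markov's inequality.
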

\begin{proof}
  If $\gd>0$ and $\cE$ is any event with $\P(\cE)\le\gd$, then, using
  \Holder's inequality,
  \begin{align}
        \E \bigpar{|V_\ga|^p\etta_{\cE}}
&\le C_p \E \bigpar{|V_\ga'|^p\etta_{\cE}}
  + C_p \E\bigpar{|V_\ga''|^p\etta_{\cE}}
\notag\\&
\le C_{p} \normx{q}{V_\ga'}^p\P(\cE)^{1-p/q} + C_p\normx{p}{V_\ga''}^p
\notag\\&
\le C_{p} B_\eps^{p} \gd^{1-p/q} + C_p \eps^p. 
  \end{align}
Since $\eps$ is arbitrary, this can be made arbitrarily small, uniformly in
$\ga$, by choosing first choosing $\eps$ and then $\gd$ small.
\end{proof}

\begin{proof}[Proof of \refT{TUp}]
Denote the \lhs{} of \eqref{c1} by $V_n$. Then $\E|V_n|^p$ is bounded by 
 \refL{LUp}.
This implies
convergence of all moments and absolute moments of
 order $<p$ in \eqref{c1} by standard arguments, but is not by itself enough
to include  moments of order $p$.  
Thus we use a truncation: 
let $M>0$ and let $f=f'+f''$ with $f':=f\ett{|f|\le M}$.
This yields a corresponding decomposition
$V_n=V_n'+V_n''$.
Let  $\eps_M:=\normp{f''}$. Then
\begin{equation}\label{emm}
  \eps_M:=\normp{f \ett{|f|>M}} \to0
\quad \text{as } M\to\infty.
\end{equation}
\refL{LUp} yields
\begin{equation}\label{viip}
\normp{V_n''}\le C_p\normp{f''}= C_p\eps_M
\end{equation}
and also, using $2p$ instead of $p$, 
\begin{equation}\label{vip}
  \normx{2p}{V_n'}^{2p}\le C_p\normx{2p}{f'}^{2p}
=C_p\E|f'|^{2p}\le C_p M^p \E|f|^p.
\end{equation}
\eqref{emm}--\eqref{vip} show that the conditions of  \refL{Lui} are
satisfied; hence, \set{|V_n|^p} is uniformly integrable, and the
result follows from \eqref{c1}.
\end{proof}

We use another simple lemma.
\begin{lemma}\label{Lpq}
  Suppose that, 
for each $x\ge1$,
$V(x)$ is a non-negative random variable and $v(x)>0$ is
  deterministic.
  \begin{romenumerate}
  \item \label{Lpqa}
If $p\ge1$,  $q\ge1$    and, for some function $h(x)>0$,
\begin{equation}\label{lpqpq}
  \E |V(x)^q-v(x)^q|^p = O\bigpar{v(x)^{pq} h(x)^p},
\qquad x\ge1,
\end{equation}
then
\begin{equation}\label{lpqp}
  \E |V(x)-v(x)|^p = O\bigpar{v(x)^{p} h(x)^p},
\qquad x\ge1.
\end{equation}
  \item \label{Lpqb}
Conversely, if \eqref{lpqp} holds for every $p\ge1$ and $h(x)\le1$, 
then \eqref{lpqpq}
holds for every $p,q\ge1$.
  \end{romenumerate}
\end{lemma}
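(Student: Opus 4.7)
The plan for both parts is to start from the identity $V^q-v^q=q\int_v^V t^{q-1}\dd t$, valid for $q\ge1$, which gives simultaneously the two-sided bound
$$q\min(V,v)^{q-1}|V-v|\le|V^q-v^q|\le q\max(V,v)^{q-1}|V-v|.$$
Part \ref{Lpqa} will use the lower bound to pass from $V^q-v^q$ to $V-v$, while part \ref{Lpqb} will use the upper bound in the reverse direction.

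For \ref{Lpqa} the obstruction is that $\min(V,v)^{q-1}$ can be much smaller than $v^{q-1}$, and this is the one genuinely delicate point. To bypass it I will split on $\cE:=\set{V\le v/2}$. On the complement $\cE^c$ one has $\min(V,v)\ge v/2$, so the lower bound gives $|V-v|\le C_q v^{1-q}|V^q-v^q|$; raising to the $p$-th power and applying the hypothesis \eqref{lpqpq} yields $\E[|V-v|^p;\cE^c]=O(v^p h^p)$. On $\cE$ itself, $|V-v|\le v$, and since $q\ge1$ one has $v^q-V^q\ge v^q-(v/2)^q\ge v^q/2$; thus $\cE\subseteq\set{|V^q-v^q|\ge v^q/2}$, and Markov's inequality with exponent $p$ together with \eqref{lpqpq} gives $\P(\cE)=O(h^p)$, hence $\E[|V-v|^p;\cE]\le v^p\P(\cE)=O(v^p h^p)$.

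For \ref{Lpqb} the case $q=1$ is \eqref{lpqp} itself, so assume $q>1$. The plan is to combine the upper bound above with \Holder's inequality, using a conjugate pair $r,r'\in(1,\infty)$:
$$\E|V^q-v^q|^p\le C_q\bigpar{\E(V^{q-1}+v^{q-1})^{pr}}^{1/r}\bigpar{\E|V-v|^{pr'}}^{1/r'}.$$
The second factor is $O(v^p h^p)$ directly from \eqref{lpqp} applied with exponent $pr'$. For the first factor I will use $V\le v+|V-v|$ together with Minkowski's inequality and \eqref{lpqp} with exponent $(q-1)pr$; since $h\le1$ this yields $\E V^{(q-1)pr}=O(v^{(q-1)pr})$, so the first factor is $O(v^{(q-1)p})$. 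Multiplying produces $O(v^{pq}h^p)$, which is \eqref{lpqpq}. All exponents invoked in \eqref{lpqp} are $\ge1$ since $p,r,r'\ge1$ and $q>1$, so the routine bookkeeping causes no trouble.
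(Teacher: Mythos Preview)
Your proof is correct, but both parts take a different route from the paper's.

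For \ref{Lpqa}, the paper avoids your case split by the sharper elementary inequality $|a^q-b^q|\ge\max(a,b)^{q-1}|a-b|$, valid for $a,b\ge0$ and $q\ge1$: when $a>b$, write $a^q-b^q=a^q\bigl(1-(b/a)^q\bigr)\ge a^q(1-b/a)=a^{q-1}(a-b)$. Since $\max(V,v)\ge v$ always, this gives $|V-v|\le v^{1-q}|V^q-v^q|$ pointwise, and \eqref{lpqp} follows from \eqref{lpqpq} in one line. Your route via the weaker $\min$-bound plus the split on $\{V\le v/2\}$ works but is longer.

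For \ref{Lpqb}, the paper instead splits on $\{V\le 2v\}$ (where the mean value theorem gives $|V^q-v^q|\le C_q v^{q-1}|V-v|$, and one applies \eqref{lpqp} with exponent $p$) versus $\{V>2v\}$ (where $|V^q-v^q|\le V^q\le 2^q|V-v|^q$, and one applies \eqref{lpqp} with exponent $pq$, using $h\le1$). Your \Holder{} approach is a clean alternative; one small point: your final justification that ``all exponents invoked are $\ge1$'' is not quite right as stated, since $(q-1)pr$ can fall below $1$ when $q$ is close to $1$. The fix is simply to choose $r$ large enough that $(q-1)pr\ge1$, which is always possible since $q>1$ (and then $r'=r/(r-1)>1$ automatically).
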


\begin{proof}
  \pfitemref{Lpqa}
If $a>b\ge0$, then 
\begin{equation}
  a^q-b^q=a^q\bigpar{1-(b/a)^q}
\ge a^q\bigpar{1-(b/a)}=a^{q-1}(a-b)=\max\set{a,b}^{q-1}(a-b).
\end{equation}
Hence, by symmetry, for all $a,b\ge0$,
\begin{equation}
  \abs{a^q-b^q}\ge
\max\set{a,b}^{q-1}|a-b|.
\end{equation}
In particular,
\begin{equation}
  \abs{V(x)^q-v(x)^q}\ge
v(x)^{q-1}|V(x)-v(x)|,
\end{equation}
and thus \eqref{lpqpq} implies \eqref{lpqp}.

\pfitemref{Lpqb}
If $V(x)\le 2v(x)$, then, by the mean value theorem,
$|V(x)^q-v(x)^q|\le C_q v(x)^{q-1}|V(x)-v(x)|$. Thus, using \eqref{lpqp},
\begin{multline}\label{august}
    \E\bigpar{\abs{V(x)^q-v(x)^q}^p\ett{V(x)\le 2v(x)}}
\\
\le C_{p,q} v(x)^{pq-p}\E|V(x)-v(x)|^p 
= O\bigpar{v(x)^{pq}h(x)^p}.
\end{multline}
On the other hand, if $V(x)>2v(x)$, then
$|V(x)^q-v(x)^q|\le V(x)^q\le 2^q|V(x)-v(x)|^q$. 
Thus, using \eqref{lpqp} with $p$ replaced by $pq$,
\begin{multline}\label{lotta}
  \E\bigpar{\abs{V(x)^q-v(x)^q}^p\ett{V(x)> 2v(x)}}
\\
\le C_{p,q} \E|V(x)-v(x)|^{pq} 
= O\bigpar{v(x)^{pq}h(x)^{pq}}
.
\end{multline}
The result follows by \eqref{august} and \eqref{lotta}.
\end{proof}

\begin{proof}[Proof of \refT{TRp}]
  As usual, we consider for definiteness $\NN-(x)$. 
By the definition \eqref{NN-}, 
$U_{\NN-(x)}\le x< U_{\NN-(x)+1}$. Hence,
\begin{multline}
-\UU_{\NN-(x)}(f-\mu)
\le U_{\NN-(x)}(f-\mu)
\le x-\binom{\NN-(x)}{d}\mu  
\\
\le \UU_{\NN-(x)+1}(f-\mu)+ C_f \NN-(x)^{d-1}
\end{multline}
and thus
\begin{align}\label{bab}
\Bigabs{x-\NN-(x)^{d}\frac{\mu}{d!}}  
\le \UU_{\NN-(x)+1}(f-\mu)+ C_f \NN-(x)^{d-1}.
\end{align}

Suppose throughout $x\ge1$, and recall $n(x)$ defined by \eqref{nx}. 
By \eqref{bab} and \refL{LUp}, for any $p>0$ and any $A\ge1$,
\begin{align}
&  \E\Bigpar{\Bigabs{x-\NN-(x)^{d}\frac{\mu}{d!}}^p\ett{\NN-(x)\le An(x)}}  
\notag\\&\qquad
\le C_p \E |\UU_{An(x)+1}(f-\mu)|^p+ C_{p,f} \bigpar{A n(x)}^{p(d-1)}
\notag\\&\qquad
\le C_{p,f} \bigpar{A n(x)}^{p(d-1/2)}
= C_{p,f} A^{p(d-1/2)}x^{p(1-1/2d)}.
\label{bai}
\end{align}
Furthermore, for any constant $A\ge2$,
$\NN-(x)\ge An(x)$ implies 
$\NN-(x)^d\frac{\mu}{d!}-x\ge (A^d-1)x\ge\frac12A^dx$.
Hence,  for any $p\ge0$ and $q>0$, using \eqref{bai},
\begin{align}
&\E\Bigpar{\Bigabs{x-\NN-(x)^{d}\frac{\mu}{d!}}^p\ett{An(x)<\NN-(x)\le 2An(x)}}
\notag\\&\qquad
\le
C_q A^{-dq}x^{-q}
 \E\Bigpar{\Bigabs{x-\NN-(x)^{d}\frac{\mu}{d!}}^{p+q}\ett{\NN-(x)\le 2An(x)}}
\notag\\&\qquad
\le
C_{p,q,f} A^{(p+q)(d-1/2)-dq}x^{(p+q)(1-1/2d)-q}
\notag\\&\qquad
=
C_{p,q,f} A^{p(d-1/2)-q/2}x^{p(1-1/2d)-q/2d}.
\label{baj}
\end{align}
Choosing $q:=2dp$, we obtain by summing \eqref{bai} with $A=2$ and
\eqref{baj} with $A=2^k$, $k=1,2,\dots$,  
for every $p>0$,
\begin{align}\label{bak}
\E\bigabs{n(x)^d-\NN-(x)^{d}}^p
&= C_{p,f}\E\Bigabs{x-\NN-(x)^{d}\frac{\mu}{d!}}^p
\notag\\&
\le
C_{p,f} x^{p(1-1/2d)}
+C_{p,f} \sumk 2^{-kp/2}x^{-p/2d}
\notag\\&
\le
C_{p,f} x^{p(1-1/2d)}.
\end{align}
By \refL{Lpq}\ref{Lpqa}, with $q=d$ and $h(x):=x^{-1/2d}$,
\eqref{bak} implies, for $p\ge1$, 
\begin{align}\label{bal}
\E\bigabs{n(x)-\NN-(x)}^p
\le
C_{p,f} x^{p/2d}.
\end{align}
This shows that if $Y(x)$ denotes the \lhs{} of \eqref{tr}, then
$\E|Y(x)|^p\le C_{p,f}$ for $x\ge1$. 
By standard arguments \cite[Chapter 5.4--5]{Gut}, this implies uniform
integrability of $|Y(x)|^r$ for any $r<p$, and thus by \eqref{tr}
convergence of moments of order $<p$. Since $p$ is arbitrary, 
convergence of arbitrary moments in \eqref{tr}
follows.

Moment convergence in \eqref{tnn} is an immediate corollary. 
Alternatively,
\eqref{bak} implies 
\begin{equation}
  \label{Nmom}
\E \bigpar{\NN-(x)^{dp}} = O\bigpar{x^p},
\qquad x\ge1,
\end{equation}
for every
fixed $p>0$, which implies moment convergence in \eqref{tnn} by the same
uniform integrability argument.
\end{proof}

\begin{proof}[Proof of \refT{TVp}]
Recall again the definition \eqref{nx} of  $n(x)$, and suppose again $x\ge1$.
We decompose the numerator in \eqref{tvtau}:
\begin{equation}\label{bb}
  \tU_{\NN\pm(x)}-\Bigparfrac{d!}{\mu}^{\td/d}\frac{\tmu}{\td!} x^{\td/d}
=
  U_{\NN\pm(x)}(\tf-\tmu)
+\frac{\tmu}{\td!}\bigpar{\NN\pm(x)^{\td}-n(x)^{\td}}
+O\bigpar{\NN\pm(x)^{\td-1}}.
\end{equation}

For the first term on the \rhs{} of \eqref{bb}, we argue similarly 
to the proof of \refT{TRp}.
First, for any $A\ge2$, by \refL{LUp},
\begin{multline}\label{bbe}
\E\Bigpar{\bigabs{  U_{\NN\pm(x)}(\tf-\tmu)}^p\ett{\NN\pm(x)\le A n(x)}}
\le 
\E\bigabs{U^*_{A n(x)}(\tf-\tmu)}^p
\\
\le C_{p,\tf} \bigpar{An(x)}^{p(\td-1/2)}
= C_{p,f,\tf} \bigpar{Ax^{1/d}}^{p(\td-1/2)}.
\end{multline}
Furthermore, for any $q>0$, taking $p=0$ in \eqref{baj},
\begin{equation}\label{bbn}
  \P\bigpar{An(x) < N(x)\le 2An(x)}
\le C_{q,f} \bigpar{Ax^{1/d}}^{-q/2}.
\end{equation}
Consequently, using the \CSineq, \eqref{bbe}--\eqref{bbn},
and choosing $q:=4(p\td+1)$,
\begin{equation}\label{bbg}
  \begin{split}
&\E\Bigpar{\bigabs{\tU_{\NN\pm(x)}(\tf-\tmu)}^p\ett{A n(x)<\NN\pm(x)\le 2A n(x)}}
\\&\hskip4em
\le
\Bigpar{
\E\Bigpar{\bigabs{\tU_{\NN\pm(x)}(\tf-\tmu)}^{2p}\ett{\NN\pm(x)\le 2A n(x)}}}\qq
\\&\hskip10em \times
\P\bigpar{A n(x)<\NN\pm(x)\le 2A n(x)}\qq
\\&\hskip4em
\le C_{p,f,\tf} \bigpar{Ax^{1/d}}^{p(\td-1/2)-q/4}
\le C_{p,f,\tf} A\qw x^{p(\td-1/2)/d}.    
  \end{split}
\end{equation}
Summing \eqref{bbe} for $A=2$ and \eqref{bbg} for $A=2^k$, $k=1,2,\dots$, we
obtain
\begin{equation}\label{bbh}
  \begin{split}
\E\bigabs{\tU_{\NN\pm(x)}(\tf-\tmu)}^p
\le
C_{p,f,\tf}  x^{p(\td-1/2)/d}\Bigpar{1+\sumk 2^{-k}}
=
C_{p,f,\tf}  x^{p(\td-1/2)/d}.
  \end{split}
\end{equation}

For the second term on the \rhs{} of \eqref{bb}, 
we use \eqref{bal} and \refL{Lpq}\ref{Lpqb}, with $q=\td$ and $h(x):=x^{-1/2d}$,
and conclude, for every $p\ge1$,
\begin{equation}\label{bbb}
\E \bigabs{\NN\pm(x)^{\td}-n(x)^{\td}}^p \le C_{p,f,\tf} x^{p(\td-1/2)/d}.
\end{equation}
Finally, by \refT{TRp} we have moment convergence in \eqref{tnn} and thus
\begin{equation}\label{bbd}
\E \bigpar{\NN\pm(x)^{p (\td-1)}} =O\bigpar{ x^{p(\td-1)/d}},
\end{equation}
which also follows from \eqref{Nmom} (changing $p$).

It follows from \eqref{bb} and \eqref{bbh}--\eqref{bbd}  that
\begin{equation}
\E\biggabs{
\frac{\tU_{\NN\pm(x)}-\bigparfrac{d!}{\mu}^{\td/d}\frac{\tmu}{\td!} x^{\td/d}}
{x^{(\td-1/2)d}} 
}^p
\le C_{p,f,\tf} .
\end{equation}
Since $p$ is arbitrary, this implies 
convergence of arbitrary moments in \eqref{tvtau}
by the same standard argument as in the
proof of \refT{TRp}.

Moment convergence in \eqref{tvtau0} is a corollary.
\end{proof}

\begin{proof}[Proof of \refT{TVp1}]
\pfitemref{TVp1a}
This is a special case of \refT{TVp}.

\pfitemref{TVp1b}
Denote the \lhs{} of \eqref{cvtau} by $V(x)$, for integers $x\ge1$, and let
$p>0$. 
It follows from \ref{TVp1a} that the family $|V(x)|^p$, $x\ge1$, 
is uniformly integrable. 
This property is preserved by the conditioning,
since 
we condition on a sequence of events $\cE_x$ with
$\liminf_{\xtoo}\P(\cE_x)>0$
by the proof of \refT{TO};
hence the result follows from \refT{TO}.
\end{proof}

\section{Examples and applications}\label{Sex}

\begin{example}\label{E22}
  Let $d=2$, and let $f$ be anti-symmetric: $f(y,x)=-f(x,y)$;
this case was studied in \cite{SJ22}.
We have $\mu=0$ and $f_2(x)=\E f(X,x)=-\E f(x,X)=-f_1(x)$; 
hence $\gs_{11}=-\gs_{12}=\gs_{22}$ and \eqref{WW} implies
$W_2(t)=-W_1(t)=\gs B(t)$, where $\gs:=\norm{f_1}\ge0$ and  $B(t)$ is a
standard Brownian motion.

For $d=2$, \eqref{psi} yields $\psi_1(s,t)=t-s$ and $\psi_1(s,t)=s$.
Hence, \eqref{t1}, \eqref{Z} and integration by parts, see \eqref{crux}, 
yield
\begin{equation}\label{e22}
  \begin{split}
    \frac{U_{nt}}{n^{3/2}}\dto
  Z_t
&=\int_0^t(t-2s)\dd W_1(s)
=-tW_1(t)+2\int_0^t W_1(s)\dd s
\\
&=\gs tB(t)-2\gs\int_0^t B(s)\dd s
  \end{split}
\end{equation}
in $\Doo$,
as shown in \cite{SJ22}
(where also the degenerate case $\gs=0$ is studied further).
\end{example}

\begin{example}[Substrings]\label{Estring}
  Consider a random string $X_1\dotsm X_n$ of length $n$ from a finite
  alphabet $\cA$, with the letters $X_i$ \iid{} with some distribution
  $\P(X_i=a)=p_a$, $a\in\cA$.
Fix a \emph{pattern} $\cW=w_1\dotsm w_m$; this is an arbitrary string in
$\cA^m$, for some $m\ge1$. 
A \emph{substring} of $X_1\dotsm X_n$ is any string $X_{i_1}\dotsm X_{i_k}$
with $1\le i_1<\dots<i_k\le n$, and we let 
$N_{n}=N_{\cW}(X_1\dotsm X_n)$ be the number of substrings that have the
pattern $\cW$. Obviously, this is an  asymmetric $U$-statistic as in
\eqref{U} with $\cS=\cA$, $d=m$ and
\begin{equation}
  f(x_1,\dots,x_m):=\ett{x_1\dotsm x_m=w_1\dotsm w_m}
=\prod_{i=1}^m\ett{x_i=w_i}.
\end{equation}
\refC{C1} yields asymptotic normality of $N_{n}$ as \ntoo,
as shown by \citet{FlajoletSzV}.

For example, let $\cA:=\setoi$, let $X_i\sim\Be(\frac12)$, and let $\cW:=10$.
A simple calculation yields $f_1(x)=\frac12(x-\frac12)=-f_2(x)$, and 
$\gs_{11}=\gs_{22}=-\gs_{12}=1/16$; thus \refC{C1} yields, see
\eqref{c1var2},
\begin{equation}
\frac{  N_{n}-n/4}{\sqrt n} \dto N\Bigpar{0,\frac{1}{48}}.
\end{equation}
Furthermore,
calculations as in \refE{E22} show that 
the functional limit
\eqref{e22} holds in this case too,
with $\gs=1/4$. 
\end{example}

\begin{example}[Patterns in permutations]\label{Eperm}
Let $\pi=\pi_1\dotsm\pi_n$ be a uniformly random permutation of length $n$, 
and let the \emph{pattern}
$\gs=\gs_1\dotsm\gs_m$ be a fixed permutation of length $m$.
The \emph{number of occurences} of $\gs $ in $\pi$, denoted by
$N_{n}=N_{\gs}(\pi)$ is the number of substrings (see \refE{Estring}) of
$\pi$ that have the same relative order as $\gs$.

We can generate the random permutation $\pi$ by taking \iid{} random
variables $X_1,\dots,X_n\sim \Uoi$, and then replacing these numbers by
their ranks. Then $N_{n}$  is the $U$-statistic with $d=m$ 
given by the function
\begin{equation}
  f(x_1,\dots,x_m)=\ett{x_1\dotsm x_m \text{ have the same relative order as }
\gs_1\dotsm\gs_m}.  
\end{equation}
\refC{C1} shows that $N_{n}$ is asymptotically normal as \ntoo.
For details, including explicit variance calculations, see \cite{SJ287};
see also the earlier proof of asymptotic normality by
\citet{Bona-Normal,Bona3}.

For example, taking $\gs=21$, $N_{n}$ is the number of inversions in
$\pi$, and we obtain by simple calculations the well-known result,
see \eg{} \cite[Section X.6]{FellerI},
\begin{equation}
  \frac{N_{n}-n^2/4}{n^{3/2}}\dto N\Bigpar{0,\frac1{36}}.
\end{equation}
\end{example}

\begin{example}[Restricted permutations I]\label{EpermI}
  Fix a set $T$ of permutations, and consider only permutations $\pi$ of
  length $n$ that \emph{avoid} $T$, in the sense that there is no occurence
  of any $\tau\in T$ in $\pi$. Let $\pi$ be uniformly random from this set,
for a given $n$.

Several cases are studied in \cite{SJ333}, and some of them yield
asymmetric $U$-statistics, sometimes stopped or conditioned as in
\refT{CVtau} or \ref{TO}. 
We sketch two examples here and in the next example, and refer to
\cite{SJ333} for details and further similar examples.

A permutation $\pi$ avoids $\set{\permB}$ if and only if $\pi$ is an
increasing sequence of \emph{blocks} that all are decreasing; in other words,
\begin{equation}\label{permB}
\pi=
(\ELL_1,\dots,1,\ELL_1+\ELL_2,\dots,\ELL_1+1,
\ELL_1+\ELL_2+\ELL_3,\dots,\ELL_1+\ELL_2+1,\dots),
\end{equation}
see \cite[Proposition 12]{SS}.
Let the number of blocks be $B\ge1$ and the block lengths
$\ELL_1,\dots,\ELL_B$; thus $\ELL_i\ge1$ and $\ELL_1+\dots+\ELL_B=n$.
Then, any such sequence $\ELL_1,\dots,\ELL_B$ is possible, and it 
determines $\pi$ uniquely. 
Hence,  taking $f(L):=1$ and thus $U_n=S_n=\sum_1^n \ELL_i$,
it is easily seen that $(\ELL_1,\dots,\ELL_B)$ has the
same distribution as the first $\NN-(n)$ elements of an \iid{} sequence
$(L_k)_k$ with $L_i\sim\Ge(1/2)$, conditioned on $U_{\NN-(n)}=n$.

Let $\gs$ be a fixed permutation that avoids $\set{\permB}$, with block
lengths $\ell_1,\dots,\ell_b$.
Then the number $\Ngsn=N_\gs(\pi)$
of occurrences of $\gs$ in $\pi$ is given by a $U$-statistic,
with $d=b$,
based on the sequence of variables $L_1,\dots,L_B$ and the function
\begin{equation}\label{fB}
\tf(x_1,\dots,x_b):=\prod_{j-1}^b\binom{x_i}{\ell_i}.
\end{equation}
\refT{TO}\ref{TOc} applies and shows asymptotic normality in the form
\begin{equation}
  \frac{\Ngsn-n^b/b!}{n^{b-1/2}}
\dto N\bigpar{0,\gam^2},
\end{equation}
for some $\gam^2>0$ depending on $\gs$. 

For example, taking $\gs=21$, so $\Nxn{21}$ is the number of inversions in
$\pi$, 
$b=1$ and, by a calculation, $\gam^2=6$; hence
\begin{equation}
  \frac{\Nxn{21}-n}{n^{1/2}}
\dto N\bigpar{0,6}.
\end{equation}

We here applied the conditional result in \refT{TO}. 
Alternatively (since a geometric distribution has no memory),
we may avoid the conditioning above 
and instead truncate the last element $\ELL_B$
such that the sum becomes exactly $n$;
using a simple approximation argument, we can then 
apply the unconditional
\refT{CVtau}.
\end{example}

\begin{example}[Restricted permutations II]\label{EpermII}
Continuing \refE{EpermI}, now let $\pi$ be a uniformly random permutation of
a given length $n$ such that $\pi$ avoids \set{\permAAA}.

A permutation $\pi$ avoids \set{\permAAA} if and only if $\pi$ is of the
form \eqref{permB} and furthermore every block length $L_i\le2$,
see \cite[Proposition $15^*$]{SS}.
 Taking again $f(L):=1$,
it is easily seen that $(\ELL_1,\dots,\ELL_B)$ has the
same distribution as the first $\NN-(n)$ elements of an \iid{} sequence
$(L'_k)\xoo$, conditioned on $U_{\NN-(n)}=n$, where we now let
\begin{equation}
\P(L'_i=1)=p,\qquad
\P(L'_i=2)=p^2,
\end{equation}
where $p+p^2=1$ and thus $p$ is the golden ratio
\begin{equation}
  p:=
\frac{\sqrt5-1}2.
\end{equation}

Let $\gs$ be a fixed permutation that avoids $\set{\permAAA}$, with block
lengths $\ell_1,\dots,\ell_b\in\set{1,2}$.
Then the number $\Ngsn=N_\gs(\pi)$
of occurrences of $\gs$ in $\pi$ is given by a $U$-statistic based on
$L_1,\dots,L_B$, with $d=b$ and the function $\tf$ in \eqref{fB}.
\refT{TO}\ref{TOc} applies and shows asymptotic normality in the form
\begin{equation}
  \frac{\Ngsn-\mu n^b/b!}{n^{b-1/2}}
\dto N\bigpar{0,\gam^2},
\end{equation}
for some $\mu>0$ and $\gam^2>0$ depending on $\gs$. 

For example, taking $\gs=21$, so $\Nxn{21}$ is the number of inversions in $\pi$,
$b=1$ and, by calculations, see \cite{SJ333},
$\mu=(3-\sqrt5)/2$ and
$\gam^2= 5^{-3/2}$;
hence
\begin{equation}\label{tazi}
  \frac{\Nxn{21}-\frac{3-\sqrt5}{2} n}{n^{1/2}}
\dto N\bigpar{0,5^{-3/2}}.
\end{equation}
\end{example}

\section{Further comments and open problems}\label{Sadd}

\begin{remark}\label{Rmom}
In \refTs{TRp} and \ref{TVp}, we assume (for simplicity) existence of all
moments for $f$ and $\tf$, and conclude convergence of all moments in 
\eqref{tnn}--\eqref{tvtau}. 
If we only want to conclude convergence of a specific moment, 
\eg{} convergence of second
moments in \eqref{tr} or \eqref{tvtau}, the proofs above show that it
suffices to assume existence  of some specific moment for $f$ and $\tf$. 
However, we do not know the best possible moment conditions for this, and we
leave it as an open problem to find optimal conditions. 
(The proofs above are not optimized; furthermore, the methods used there are not
necessarily optimal.)
In particular, we do not know whether  convergence of first and second moments
always holds in \eqref{tr} and \eqref{tvtau} without further moment
assumptions.
(For some results when $d=\td=1$, see \cite{SJ52} and 
\cite[Chapter 3]{Gut-SRW}.)
\end{remark}

\begin{remark}
  \label{Rlarge}
In the case when $f$ is bounded, 
subgaussian estimates for large deviations of the \lhs{} of \eqref{c1} 
are shown in \cite{Hoeffding1963} and \cite{SJ150}.
This and the definitions \eqref{NN-}--\eqref{NN+} 
lead to large deviation estimates for $\NN\pm$,
and, provided also $\tf$ is bounded, then further to large deviation
estimates for the \lhs{} in \eqref{tvtau}.
We leave the details to the reader.
\end{remark}

\begin{remark}\label{Rdeg}
As said in the introduction, 
the results above are of most interest in  the non-degenerate case, where
  $\gS=(\gs_{ij})$ defined by \eqref{gsij} is non-zero. In the
  degenerate case, when all $\gs_{ij}=0$, or equivalently, $f_i(X)=0$ \as{}
  for every $i$, the results still hold but then the
  limits in \eg{} \refT{T1} are degenerate, see also \eqref{c1=0}.
A typical degenerate example is the anti-symmetric
$f(X_1,X_2)=\sin(X_1-X_2)$, with 
$X$ uniformly distributed on $[0,2\pi)$ (best regarded as the unit
circle), where $f_1=f_2=0$.

In the degenerate case, 
one can instead normalize using a smaller power of
  $n$ than in \refT{T1} and obtain non-degenerate limits;
this is well-known   in the symmetric case, 
see \eg{} \cite{Gregory1977},  
\cite{RubinVitale1980}, 
\cite[Chapter 11]{SJIII} 
for univariate results
and 
\cite{Neuhaus1977}, 
\cite{Hall1979}, 
\cite{DehlingDP1984}, 
\cite{DenkerGK1985}, 
\cite{Ronzhin1985},  
\cite[Remark 11.11]{SJIII} 
for functional limits.
This extends to the asymmetric case;
univariate results are given in  \cite[Chapter 11.2]{SJIII} 
with the possibility of functional limits
briefly mentioned in \cite[Remark 11.25]{SJIII},  
and the case $d=2$ and $f$ antisymmetric was studied in \cite{SJ22} 
(functional limits for
both the degenerate and non-degenerate cases), see \refE{E22}.
We do not consider such refined results for the degenerate case in the
present paper.
\end{remark}

\begin{remark}
For multi-sample $U$-statistics, \ie, variables of the form
\begin{equation}
U_{n_1,\dots,n_\ell}:=
  \sum f\bigpar{X\xx1_{i_{1,1}},\dots,X\xx1_{i_{1,d(1)}},
\dots,
X\xx\ell_{i_{\ell,1}},\dots,X\xx\ell_{i_{\ell,d(\ell)}}},
\end{equation}
summing over $1\le i_{j,1}<\dots<i_{j,d(j)}\le n_j$ for every $j=1,\dots,\ell$,
a multi-dimensional functional limit theorem has been
given by \citet{Sen1974} 
in the symmetric case (\ie, with $f$ symmetric in
each of the $\ell$ sets of variables); see also \eg{}
\cite{Neuhaus1977}, 
\cite{Hall1979}, 
\cite{DenkerGK1985}. 
We expect that this too can be extended to the asymmetric case, but we leave
this to the interested reader.
\end{remark}

\begin{remark}\label{Rstand}
There is a standard trick to convert an asymmetric $U$-statistic to a symmetric
  one, see \eg{} \cite{SJIII}.
Let $Y_i\sim U(0,1)$ be \iid{} random variables, independent of
$(X_j)_1^\infty$,
let $Z_i:=(X_i,Y_i)\in \tS:=S\times\bbR$, and define $F:\tS^n\to\bbR$ by
\begin{equation}
F\bigpar{(x_1,y_1),\dots,(x_d,y_d)}:=f(x_1,\dots,x_d)\ett{y_1<\dots<y_d}
\end{equation}
and its symmetrized version 
\begin{equation}
  F^*(z_1,\dots,z_d):=\sum_{\gs\in S_d}F\bigpar{z_{\gs(1)},\dots,z_{\gs(d)}},
\end{equation}
summing over the $d!$ permutations of $\set{1,\dots,d}$.
Then, letting $\sumx$ denote the sum over distinct indices,
\begin{align}
  U_n(f)&\eqd  \sumx_{\substack{i_1,\dots,i_d\le n\\Y_{i_1}<\dots<Y_{i_d}}} 
f\bigpar{X_{i_1},\dots,X_{i_d}}
=\sumx_{i_1,\dots,i_d\le n}F\bigpar{(X_{i_1},Y_{i_1}),\dots,(X_{i_d},Y_{i_d})}
\notag\\&
=\sum_{1\le i_1<\dots<i_d\le n}F^*\bigpar{Z_{i_1},\dots,Z_{i_d}}
=U_n(F^*).
\end{align}
This trick often makes it possible to transfer results for symmetric
$U$-statistics to the general, asymmetric case. However, this trick works
only for a single $n$, and we do not know of any similar trick that can
handle the process $(U_n)_{n=0}^\infty$. Hence this method does not seem
useful for the results above.
\end{remark}

\begin{remark}\label{Rreverse}
In the symmetric case, it is easily seen that $U_n/\binom nd$, $n\ge d$, is
a reverse martingale, which for example yields a simple proof of the law of
large numbers; see \cite{Berk} and \eg{} \cite[Chapter 10.16.2]{Gut}. 
This does not hold in general; thus we used 
above
(in the proof of \refL{LU*})
instead forward martingales similarly to \cite{HoeffdingLLN}.
\end{remark}


\newcommand\AAP{\emph{Adv. Appl. Probab.} }
\newcommand\JAP{\emph{J. Appl. Probab.} }
\newcommand\JAMS{\emph{J. \AMS} }
\newcommand\MAMS{\emph{Memoirs \AMS} }
\newcommand\PAMS{\emph{Proc. \AMS} }
\newcommand\TAMS{\emph{Trans. \AMS} }
\newcommand\AnnMS{\emph{Ann. Math. Statist.} }
\newcommand\AnnPr{\emph{Ann. Probab.} }
\newcommand\CPC{\emph{Combin. Probab. Comput.} }
\newcommand\JMAA{\emph{J. Math. Anal. Appl.} }
\newcommand\RSA{\emph{Random Struct. Alg.} }
\newcommand\ZW{\emph{Z. Wahrsch. Verw. Gebiete} }
\newcommand\DMTCS{\jour{Discr. Math. Theor. Comput. Sci.} }

\newcommand\AMS{Amer. Math. Soc.}
\newcommand\Springer{Springer-Verlag}
\newcommand\Wiley{Wiley}

\newcommand\vol{\textbf}
\newcommand\jour{\emph}
\newcommand\book{\emph}
\newcommand\inbook{\emph}
\def\no#1#2,{\unskip#2, no. #1,} 
\newcommand\toappear{\unskip, to appear}

\newcommand\arxiv[1]{\texttt{arXiv:#1}}
\newcommand\arXiv{\arxiv}

\def\nobibitem#1\par{}

\newcommand\xand{\& }

\end{document}